\theoremstyle{plain}
\newtheorem{thm}{Theorem}[section]
\newtheorem{cor}[thm]{Corollary}
\newtheorem{lem}[thm]{Lemma}
\newtheorem{prop}[thm]{Proposition}
\newtheorem{df}[thm]{Definition}
\newtheorem{theorem}{Theorem}[section]
\newtheorem{lemma}[theorem]{Lemma}
\newtheorem{corollary}[theorem]{Corollary}
\newtheorem{proposition}[theorem]{Proposition}
\newtheorem{remark}[theorem]{Remark}
\newcommand{\NN}{\mathbb{N}}
\newcommand{\Q}{\mathbb{Q}}
\newcommand{\Z}{\mathbb{Z}}
\newcommand{\fq}{\mathfrak{q}}
\begin{document}

\title[]{A note on the main conjecture over $\Q$}

\author{Mahesh Kakde, Zdzis\l aw Wojtkowiak  \\ \today}%

\address{Laboratoire Jean Alexandre Dieudonn\'e, U.R.A. au C.N.R.S., No 168, D\'epartment de Math\'ematiques, Universit\'e de Nice-Sophia Antipolis, Parc Valrose -- B.P.No. 71, 06108, Nice Cedex 2, France}
\email{wojtkow@unice.fr}

\address{King's College London,
Department of Mathematics,
London WC2R 2LS,
U.K.}
\email{mahesh.kakde@kcl.ac.uk}


\begin{abstract}
In this note we show how the main conjecture of the Iwasawa theory over $\Q$ has a natural place in the context of the Galois representation of
$Gal (\bar  \Q/\Q)$ on the etale pro-$p$ fundamental group of the projective line minus three points. However we still need to assume the Vandiver 
conjecture to get a proof of the main conjecture in this context.
\end{abstract}
\maketitle

\section{Introduction}

Let us fix an odd prime number $p$. Let $\mathcal{M}_p$ (resp. $M_p$) be the maximal abelian pro-$p$ extension of $\Q(\mu_{p^{\infty}})$ (resp. $\Q(\mu_{p^{\infty}})^+$) unramified outside $p$. Let us denote 
\[
X_{\infty} = Gal(M_p/\Q(\mu_{p^{\infty}})^+)
\]
and
\[
G = Gal(\Q(\mu_{p^{\infty}})^+/\Q).
\]
The group $G$ acts on $X_{\infty}$ by conjugation and hence $X_{\infty}$ is a $\Z_p[[G]]$-module. By a well-known theorem of Iwasawa $X_{\infty}$ is a finitely generated torsion $\Z_p[[G]]$-module. The structure theorem of finitely generated torsion $\Z_p[[G]]$-modules then implies that $X_{\infty}$ is pseudo-isomorphic to $\oplus_i \Z_p[[G]]/(f_i)$ for some $f_i \in Z_p[[G]]$. The main conjecture of Iwasawa theory, proved by Mazur-Wiles \cite{mazurwiles} and independently by Rubin (appendix in \cite{Lang}), states that 
\[
(\prod f_i) = \zeta_{\Q,p} \cdot I(\Z_p[[G]]),
\]
where $I(\Z_p[[G]])$ is the augmentation ideal and $\zeta_{\Q,p} \in Frac(\Z_p[[G]])$ is the $p$-adic zeta function of Kubota-Leopoldt. 
For a more detailed description the reader should consult \cite[pages 1-8]{CS}. 


Let $\mathcal{L}_p$ be the maximal pro-$p$ abelian extension of $\Q (\mu _{p^\infty})$ unramified everywhere. Then $\mathcal{L}_p$ is contained in $\mathcal{M}_p$. Let
\[
 \Gamma :=Gal (\Q (\mu _{p^\infty })/\Q)\cong \Z _p^\times\,.
\]
The group $\Gamma$ acts on $Gal (\mathcal{L}_p/\Q (\mu _{p^\infty}))$ by conjugation.
The main conjecture can also be stated in terms of the $\Z_p[[\Gamma]]$-module $Gal (\mathcal{L}_p/\Q (\mu _{p^\infty}))$ 
(see \cite[the beginning of section 1.4]{CS} and \cite[chapter 14, section 14.5]{Wa}). In this note we present a proof of the main conjecture for 
$Gal (\mathcal{L}_p/\Q (\mu _{p^\infty}))$, assuming the Vandiver conjecture for a prime $p$. We remark that it was already observed by Iwasawa that the main conjecture is a consequence of Vandiver's conjecture. We place our proof in the context of the natural representation of $Gal(\overline \Q /\Q)$ on the pro-$p$ etale fundamental group of $\mathbb{P} ^1_{\overline \Q} -\{0,1,\infty \}$ based at the tangential point $\widearrow{10}$, where the main conjecture over $\Q$ has perhaps its natural place. We point out that the $p$-adic zeta function of Kubota-Leopoldt appears naturally, while considering this Galois representation (see \cite{W2}).

 Our proof is based on construction of the cocycle 
\[
\mathfrak{A}(\widearrow{10}) : G_{\Q} \rightarrow \Z_p[[\Z_p]]
\]
defined in \cite{W2} (and denoted there by $\zeta_p$), whose construction  will be given in section 2 and on the Ihara's formula 
\[
\int_{\Z_p^{\times}} x^{m-1} d\mathfrak{A}(\widearrow{10})([cl](\epsilon))(x) = L_p(m, \omega^{1-m}) (1- p^{m-1}) CW_m(\epsilon),
\]
 for $m>1$ and odd (see \cite[formula on the page 105]{Ihara}).

Let us explain briefly the notations and objects appearing in the formula. Let $\mathcal{U}_\infty ^1$ be the projective limit with respect to the relative norm maps of the principal units of $\Q _p(\mu _{p^n})$. Let $\mathcal{K}_p$ be the field extension of $\Q(\mu _{p^\infty})$ generated by all $p$ powers  roots of $1-\xi _{p^n}^i$ for all $n$ and all $0<i<p^n$. In the formula, $\epsilon \in \mathcal{U}_\infty ^1$, $[cl]:\mathcal{U}_\infty ^1\rightarrow Gal(\mathcal{K}_p/\Q(\mu _{p^\infty})$ is induced from the local class field theory maps, $CW_n:\mathcal{U}_\infty ^1\rightarrow \Z _p$ are Coates-Wiles homomorphisms, $L_p$ is the $p$-adic $L$-function of Kubota-Leopoldt and $\omega$ is the Teichm\"uller character. Our principal result is the following consequence of the Ihara's formula.
\begin{proposition}\label{prop:A}
 Let $\epsilon \in \mathcal{U}_\infty ^1$. Then
 \[
  \mathfrak{A}(\widearrow{10})^\times ([cl](\epsilon))=\zeta _p\cdot \mu ^\times _{\Delta (f_\epsilon )}
 \]
as pseudo-measures on $\mathbb{Z}_p^{\times}$. (Using the well-known isomorphism between the ring of measures on $\mathbb{Z}_p^{\times}$ and the ring of $\Z _p[[\Z _p^\times ]]$ we may consider the above equation as an equality in the total ring of fractions of $\Z_p[[\Z_p^{\times}]]$).
\end{proposition}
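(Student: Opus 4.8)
The plan is to derive Proposition~\ref{prop:A} by testing both sides against the moments $x^{m-1}$ for odd $m>1$, using Ihara's formula as the input and then invoking the density of such moments and a standard interpolation argument to upgrade the moment identity to an identity of pseudo-measures. Concretely, write $\mu^\times_{\Delta(f_\epsilon)}$ for the Coleman/Coates--Wiles pseudo-measure attached to $\epsilon$ (so that $\int_{\Z_p^\times} x^{m-1}\,d\mu^\times_{\Delta(f_\epsilon)}(x)$ reproduces the Coates--Wiles values $(1-p^{m-1})CW_m(\epsilon)$ up to the precise normalisation one reads off from the construction in section~2), and recall that $\zeta_p$ is, as a pseudo-measure, exactly the Kubota--Leopoldt measure whose moments are $L_p(m,\omega^{1-m})$. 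Then the right-hand side of the claimed identity, viewed as a pseudo-measure, has $m$-th moment equal to $L_p(m,\omega^{1-m})(1-p^{m-1})CW_m(\epsilon)$ for every odd $m>1$.

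First I would unwind the left-hand side. The cochain $\mathfrak{A}(\widearrow{10})$ takes values in $\Z_p[[\Z_p]]$; applying the ``$\times$'' operation (restriction/projection to the measures supported on $\Z_p^\times$, i.e. passing from $\Z_p[[\Z_p]]$ to $\Z_p[[\Z_p^\times]]$ and dividing by the appropriate augmentation-type factor to land in pseudo-measures) produces $\mathfrak{A}(\widearrow{10})^\times([cl](\epsilon))$ as a pseudo-measure on $\Z_p^\times$. I would then quote Ihara's formula verbatim: for odd $m>1$,
\[
\int_{\Z_p^{\times}} x^{m-1}\, d\mathfrak{A}(\widearrow{10})([cl](\epsilon))(x) = L_p(m,\omega^{1-m})(1-p^{m-1})CW_m(\epsilon),
\]
and check that passing to the $\times$-version does not disturb these moments (the $x^{m-1}$ with $m>1$ are supported away from the part killed by the $\times$-operation, so the moments of $\mathfrak{A}(\widearrow{10})^\times$ agree with those of $\mathfrak{A}(\widearrow{10})$ on these test functions). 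Hence the $m$-th moments of the two sides coincide for all odd $m>1$.

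Next I would promote this to equality of pseudo-measures. The key point is that the characters $x\mapsto x^{m-1}$ for odd $m>1$, equivalently the even powers of the cyclotomic character twisted suitably, are Zariski-dense among the continuous $\Z_p$-valued characters of $\Z_p^\times$ on the relevant $(-1)$-eigenspace for complex conjugation; since both $\mathfrak{A}(\widearrow{10})^\times([cl](\epsilon))$ and $\zeta_p\cdot\mu^\times_{\Delta(f_\epsilon)}$ are pseudo-measures whose difference, multiplied by the augmentation ideal, becomes a genuine measure, a measure on $\Z_p^\times$ (or rather on the appropriate component) is determined by its moments at these characters together with the usual $p$-adic continuity/interpolation estimates. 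I would make this precise by the standard argument: multiply the difference of the two sides by a fixed element of the augmentation ideal $I(\Z_p[[\Z_p^\times]])$ to obtain an honest measure, observe that it integrates to $0$ against all $x^{m-1}$ with $m$ odd $>1$, and conclude by density that it is the zero measure; since this holds after multiplying by a non-zero-divisor, the two pseudo-measures are equal in the total ring of fractions.

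The main obstacle I anticipate is bookkeeping rather than conceptual: getting the normalisations to match exactly on the nose — in particular, tracking how the tangential-base-point cocycle $\mathfrak{A}(\widearrow{10})$ of section~2 is related via $[cl]$ to the Coleman power series / Coates--Wiles data so that the factor $(1-p^{m-1})$ and the $\omega^{1-m}$-twist in Ihara's formula are precisely absorbed into the definition of $\mu^\times_{\Delta(f_\epsilon)}$, and checking that the $\times$-operation is compatible on both sides (that it sends $\zeta_p$, a priori a measure on $\Z_p$, to the Kubota--Leopoldt pseudo-measure on $\Z_p^\times$ with the standard Euler factor at $p$). A secondary subtlety is the behaviour at $m=1$ and at the trivial character, which is exactly where the pseudo-measure (as opposed to measure) language is needed and where one must argue that the single ``missing'' moment is forced by the pole structure; this is handled by the augmentation-ideal trick above, but it requires knowing that $\mu^\times_{\Delta(f_\epsilon)}$ is a genuine measure (equivalently that $\Delta(f_\epsilon)$ lies in $\Z_p[[\Z_p^\times]]$ and not merely its fraction field), so that all the pole is carried by $\zeta_p$.
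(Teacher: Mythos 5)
Your plan matches the paper's proof in structure and in its essential steps: (1) take moments against $x^{m-1}$ and plug in Ihara's formula for odd $m>1$; (2) multiply through by $\delta_1-\delta_{c^{-1}}$ (a generator of the augmentation ideal, cf.\ Lemma~\ref{lem:8.1} and \cite[Lemma 4.2.5]{CS}) to clear the pole of $\zeta_p$ and land in genuine measures; (3) upgrade the resulting moment identity to an identity in $\Z_p[[\Z_p^\times]]$ via injectivity and multiplicativity of the moment map $\Phi$ (Lemma~\ref{lem:3.1}); (4) divide out the non-zero-divisor $\delta_1-\delta_{c^{-1}}$ in the total ring of fractions. That is exactly the chain Lemma~\ref{lem:7.1} $\Rightarrow$ Proposition~\ref{prop:7.2} $\Rightarrow$ Corollary~\ref{cor:7.3} together with the definition of $\zeta_p$.

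The one point you should make fully explicit is the passage from ``moments at odd $m>1$ agree'' to ``the measures agree.'' The paper's Lemma~\ref{lem:7.1} verifies the cleared moment identity for \emph{all} $m\ge 1$ before invoking $\Phi$: for $m=1$ both sides vanish (the left by equation~(\ref{eq:14}), the right by Corollary~\ref{cor:4.4}), and for even $m>0$ both sides vanish because $L_p(m,\omega^{1-m})=0$ when $1-m$ is odd and because $\mathfrak{A}(\widearrow{10})^\times$ factors through the minus eigenspace (Proposition~\ref{prop:3.1}(ii)). Your appeal to density of the odd characters ``on the relevant minus eigenspace'' is the right idea, but to make it a proof you must first record that both $(\delta_1-\delta_{c^{-1}})\cdot\mathfrak{A}(\widearrow{10})^\times([cl](\epsilon))$ and $\mathcal{F}_c\cdot\mu^\times_{\Delta(f_\epsilon)}$ actually lie in $\Z_p[[\Z_p^\times]]^-$ and that the $m=1$ moment of the left-hand side vanishes; the paper isolates these facts in Proposition~\ref{prop:3.1}(ii), Lemma~\ref{lem:7.5} and Corollary~\ref{cor:4.4}, while your sketch gestures at them without nailing them down. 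A small but genuine misreading: the ``$\times$'' in the paper is nothing more than restriction of a measure from $\Z_p$ to $\Z_p^\times$ --- no division by any augmentation-type factor is involved --- so $\mathfrak{A}(\widearrow{10})^\times([cl](\epsilon))$ is an honest element of $\Z_p[[\Z_p^\times]]$, and the pseudo-measure language is forced only by the factor $\zeta_p=-(\delta_1-\delta_{c^{-1}})^{-1}\mathcal{F}_c$ on the other side. Your anticipation of a normalisation headache is well founded: the sign of the Euler factor in Ihara's formula is written differently in the Introduction $(1-p^{m-1})$ and in Section~7, equation~(\ref{eq:11}), $(p^{m-1}-1)$, and the overall sign is what the minus in the definition of $\zeta_p$ compensates for.
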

In the last formula, $f_\epsilon$ is the Coleman power series associated to $\epsilon$, the power series $\Delta(f_{\epsilon})$ is defined by $(\Delta (f_\epsilon ))(T)=(1+T)f_\epsilon ^\prime (T)/f_\epsilon(T)$. Further, $\mu _g$ denotes the measure on $\Z_p$ associated to a power series $g(T)\in \Z _p[[T]]$ and $\mu_g^\times$ denotes the restriction of the measure on $\Z _p$ to $\Z_p^\times$ (again we have implicitly used isomorphisms between the ring of $\Z_p$-values measures on $\Z_p$, the ring $\Z_p[[\Z_p]]$ and the ring $\Z_p[[T]]$, the latter obtained by fixing $1 \in \Z_p$ as its topological generator). The element $\zeta _p$ is the $p$-adic zeta function we shall construct in this paper.

\begin{corollary}\label{cor:B} Let
 \[
  \mathfrak{A}(\widearrow{10})^\times : Gal(\mathcal{K}_p/\Q(\mu _{p^\infty})) \rightarrow \Z_p[[\Z_p^\times]]
 \]
be induced by $ \mathfrak{A}(\widearrow{10}) : G_{\Q} \rightarrow \Z_p[[\Z_p]] $ by restriction of a measure on $\Z_p$ to $\Z _p^\times$. 
Then we have an isomorphism of $\Z_p[[\Gamma]]$-modules
\[
 Gal(\mathcal{K}_p/\mathcal{K}_p\cap \mathcal{L}_p) \cong (\zeta _p)
\]
where $(\zeta _p)$ is  
the augmentation ideal of $\Z_p[[\Z_p^\times]]$ times  $\zeta _p$.
\end{corollary}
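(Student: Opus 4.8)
The plan is to prove that $\mathfrak{A}(\widearrow{10})^\times$, restricted to the subgroup $Gal(\mathcal{K}_p/\mathcal{K}_p\cap\mathcal{L}_p)$ of $Gal(\mathcal{K}_p/\Q(\mu_{p^\infty}))$, is an isomorphism onto $(\zeta_p)$. Proposition~\ref{prop:A} reduces this to two classical ingredients: a class-field-theoretic description of the Galois group on the left, and Coleman's theory of power series describing the kernel and image of the logarithmic-derivative map $\epsilon\mapsto\mu^\times_{\Delta(f_\epsilon)}$.

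First I would identify $Gal(\mathcal{K}_p/\mathcal{K}_p\cap\mathcal{L}_p)$ with $\im([cl])$. Since each $1-\xi_{p^n}^i$ is a unit outside $p$, the extension $\mathcal{K}_p/\Q(\mu_{p^\infty})$ is unramified outside $p$, and $\Q(\mu_{p^\infty})$ has a single place $\frakp$ above $p$, totally ramified over $\Q_p$. Therefore the maximal everywhere-unramified subextension of $\mathcal{K}_p/\Q(\mu_{p^\infty})$ is precisely $\mathcal{K}_p\cap\mathcal{L}_p$, and it is the fixed field of the inertia subgroup at $\frakp$; by local class field theory this inertia subgroup is the image of the principal local units at $\frakp$ under the reciprocity map (the tame part of inertia being prime to $p$ here), i.e.\ $\im([cl])$. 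By Proposition~\ref{prop:A}, on this group $\mathfrak{A}(\widearrow{10})^\times$ is the map $[cl](\epsilon)\mapsto\zeta_p\cdot\mu^\times_{\Delta(f_\epsilon)}$, $\epsilon\in\mathcal{U}_\infty^1$. By Coleman's theory, $\epsilon\mapsto\mu_{\Delta(f_\epsilon)}$ followed by restriction of measures to $\Z_p^\times$ maps $\mathcal{U}_\infty^1$ onto the augmentation ideal $I(\Z_p[[\Z_p^\times]])$ — its total mass vanishes (the $\Z_p^\times$-restriction of a logarithmic-derivative measure always lies in $I$), and surjectivity onto $I$ is classical — so $\mathfrak{A}(\widearrow{10})^\times\big(Gal(\mathcal{K}_p/\mathcal{K}_p\cap\mathcal{L}_p)\big)=\zeta_p\cdot I(\Z_p[[\Z_p^\times]])=(\zeta_p)$.

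For injectivity it suffices to show $\ker\big(\mathfrak{A}(\widearrow{10})^\times\circ[cl]\big)=\ker([cl])$ in $\mathcal{U}_\infty^1$. Complex conjugation sends $f_\epsilon(T)$ to $f_\epsilon((1+T)^{-1}-1)$, so $\mu^\times_{\Delta(f_\epsilon)}$ is odd for $\epsilon$ in the eigenspace $\mathcal{U}_\infty^{1,+}$ of complex conjugation and even for $\epsilon\in\mathcal{U}_\infty^{1,-}$; since $\zeta_p$ annihilates exactly the odd part of $\Z_p[[\Z_p^\times]]$, and the kernel of the Coleman map is $\Z_p(1)=\varprojlim\mu_{p^n}\subseteq\mathcal{U}_\infty^{1,-}$, one gets $\ker\big(\mathfrak{A}(\widearrow{10})^\times\circ[cl]\big)=\mathcal{U}_\infty^{1,+}\oplus\Z_p(1)$. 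On the other hand $\ker([cl])\supseteq\mathcal{U}_\infty^{1,+}$, because $Gal(\mathcal{K}_p/\Q(\mu_{p^\infty}))=\Hom(\langle 1-\xi_{p^n}^i\rangle,\mu_{p^\infty})$ is odd for complex conjugation (each $1-\xi_{p^n}^i$ being fixed by $c$ modulo $p$-th powers), and $\ker([cl])\supseteq\Z_p(1)$, because each $\zeta_{p^n}$ is a global unit of norm $1$ over $\Q$ with a single prime above $p$, so by the product formula all of its local Hilbert symbols at $\frakp$ against the $1-\xi_{p^k}^i$ are trivial. Since $\ker([cl])\subseteq\ker\big(\mathfrak{A}(\widearrow{10})^\times\circ[cl]\big)$ trivially, equality follows and $\mathfrak{A}(\widearrow{10})^\times$ is injective on $\im([cl])$. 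Finally one checks that the resulting bijection is $\Z_p[[\Gamma]]$-linear: the required $\Gamma$-equivariance of $\mathfrak{A}(\widearrow{10})^\times$ is exactly the Tate twist built into the definition of the cocycle on $\pi_1^{\et}(\mathbb{P}^1_{\overline{\Q}}-\{0,1,\infty\})$, and this is what reconciles the a priori odd module $Gal(\mathcal{K}_p/\mathcal{K}_p\cap\mathcal{L}_p)$ with the ideal $(\zeta_p)$.

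The main obstacle is this kernel computation together with the attendant bookkeeping: one has to combine Coleman's determination of the kernel and image of $\epsilon\mapsto\mu^\times_{\Delta(f_\epsilon)}$ with the exact description of $\ker([cl])$ — in particular the vanishing of the Hilbert symbols of the norm-compatible roots of unity via the product formula — and keep careful track of the action of complex conjugation and of the cyclotomic twist, so that $Gal(\mathcal{K}_p/\mathcal{K}_p\cap\mathcal{L}_p)\cong(\zeta_p)$ is genuinely an isomorphism of $\Z_p[[\Gamma]]$-modules and not merely of abelian groups.
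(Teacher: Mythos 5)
Your surjectivity computation tracks the paper's (Proposition~\ref{prop:8.2}): identify $Gal(\mathcal{K}_p/\mathcal{K}_p\cap\mathcal{L}_p)$ with $\im([cl])$, apply Proposition~\ref{prop:A} to get $\mathfrak{A}(\widearrow{10})^\times([cl](\epsilon))=\zeta_p\cdot\mu^\times_{\Delta(f_\epsilon)}$, use Coleman theory (Lemma~\ref{lem:8.1}) to see $\epsilon\mapsto\mu^\times_{\Delta(f_\epsilon)}$ surjects onto $I(\Z_p^\times)$, and conclude the image is $\zeta_p\cdot I(\Z_p^\times)=(\zeta_p)$. That part is fine and is in substance the same as the paper.

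Where you diverge is on injectivity, and there your argument both takes a longer route and has a real gap. The paper never needs to compute $\ker([cl])$ at all: Proposition~\ref{prop:3.1}(i) already establishes, by Kummer theory alone, that $z_p=\mathfrak{A}(\widearrow{10})^\times$ is injective on the \emph{whole} of $Gal(\mathcal{K}_p/\Q(\mu_{p^\infty}))$, hence certainly on the subgroup $Gal(\mathcal{K}_p/\mathcal{K}_p\cap\mathcal{L}_p)$. You instead try to show $\ker(\mathfrak{A}(\widearrow{10})^\times\circ[cl])=\ker([cl])$ by pinning both down to $\mathcal{U}_\infty^{1,+}\oplus\Z_p(1)$. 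For the upper bound on $\ker(\mathfrak{A}(\widearrow{10})^\times\circ[cl])$ you invoke the claim that ``$\zeta_p$ annihilates exactly the odd part of $\Z_p[[\Z_p^\times]]$''. The ``annihilates'' direction is Lemma~\ref{lem:7.5} (since $\mathcal{F}_c\in\Z_p[[\Z_p^\times]]^-$, multiplying it against the complementary eigenspace gives zero), but the ``exactly'' direction is not free: it is the statement that $\mathcal{F}_c$ (equivalently the relevant branches of the Kubota--Leopoldt $p$-adic $L$-function) is a non-zero-divisor on the minus part $\Z_p[[\Z_p^\times]]^-$, i.e.\ that those $p$-adic $L$-series are not identically zero. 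That is true, but it is not established anywhere in your argument and is exactly the kind of input the paper sidesteps by using the Kummer-theoretic injectivity of $z_p$. So as written the proof is incomplete at that step. (There is also a sign/convention hazard: with the twisted $\Gamma$-action of equation~(\ref{eq:1.3}), the ``$-$'' eigenspace of $\Z_p[[\Z_p^\times]]$ is supported on \emph{even} Teichm\"uller characters, so ``odd part'' must be read as the $+$-eigenspace; you should make that explicit to avoid contradicting Lemma~\ref{lem:7.5}.) The cleanest repair is simply to replace the entire kernel computation by a citation of Proposition~\ref{prop:3.1}(i).
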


The action of $\Z_p[[\Gamma]]$ on $\Z_p[[\Z_p^\times ]]$ will be defined in section 2. In particular the action of $-1\in  \Gamma$ is an involution, hence any $\Z_p[[\Gamma]]$-module is a direct sum of the $+$ and $-$ parts. 

\begin{lemma}\label{lem:C}
Let us assume the Vandiver conjecture.
Then the map
\[
\mathfrak{A}(\widearrow{10})^\times : Gal(\mathcal{K}_p/\Q(\mu _{p^\infty})) \rightarrow \Z_p[[\Z_p^\times]]^{-}
\]
is an isomorphism.
\end{lemma}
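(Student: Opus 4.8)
The plan is to combine two inputs: that $\mathcal{K}_p$ is a Kummer extension of $\Q(\mu_{p^\infty})$ whose radical is generated over $\Z_p[[\Gamma]]$ by a single element, and that Proposition~\ref{prop:A} and Corollary~\ref{cor:B} already compute $\mathfrak{A}(\widearrow{10})^\times$ on the inertia subgroup. First I would check the cyclicity: among the $1-\xi_{p^n}^i$ with $0<i<p^n$, those with $p\nmid i$ form the $\Gamma$-orbit of $1-\xi_{p^n}$, while $1-\xi_{p^{n-1}}=\prod_{j=0}^{p-1}(1-\xi_{p^n}^{1+jp^{n-1}})$ shows the lower-level generators already lie in $\Z_p[[\Gamma]]\cdot(1-\xi_{p^n})$; hence the Kummer radical, and with it $Gal(\mathcal{K}_p/\Q(\mu_{p^\infty}))$, is a cyclic $\Z_p[[\Gamma]]$-module. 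Writing $\Gamma_1:=1+p\Z_p$ and decomposing into $\omega$-eigenspaces, for each odd $j$ the module $Gal(\mathcal{K}_p/\Q(\mu_{p^\infty}))^{(j)}$ is cyclic over $\Z_p[[\Gamma_1]]$, the even eigenspaces vanish, and $\mathfrak{A}(\widearrow{10})^\times$ takes values in $\Z_p[[\Z_p^\times]]^{-}$ (Kummer duality with a radical of $+1$-type, up to roots of unity, introduces a Tate twist, which is also visible from the Ihara formula, whose moments only involve $x^{m-1}$ with $m$ odd).

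Next, injectivity, which I expect to need neither Vandiver nor the main conjecture. By Corollary~\ref{cor:B}, $\mathfrak{A}(\widearrow{10})^\times$ carries the inertia subgroup $Gal(\mathcal{K}_p/\mathcal{K}_p\cap\mathcal{L}_p)$ isomorphically onto $(\zeta_p)$; in the $j$-th eigenspace ($j$ odd) this is the principal ideal $(\zeta_p^{(j)})\subseteq\Z_p[[\Gamma_1]]$ generated by the nonzero power series attached to $L_p(\,\cdot\,,\omega^{1-j})$, i.e. a free $\Z_p[[\Gamma_1]]$-module of rank one. A cyclic $\Z_p[[\Gamma_1]]$-module that contains a free rank-one submodule is itself free of rank one, so $Gal(\mathcal{K}_p/\Q(\mu_{p^\infty}))^{(j)}\cong\Z_p[[\Gamma_1]]$ and $\mathfrak{A}(\widearrow{10})^\times$ on this eigenspace is multiplication by some $c_j\in\Z_p[[\Gamma_1]]$; it is nonzero since its image contains $(\zeta_p^{(j)})\neq0$. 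Hence $\mathfrak{A}(\widearrow{10})^\times$ is injective.

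It remains to prove surjectivity, i.e. that each $c_j$ is a unit, and this is where Vandiver is used. Under $Gal(\mathcal{K}_p/\Q(\mu_{p^\infty}))^{(j)}\cong\Z_p[[\Gamma_1]]$ the inertia subgroup is a nonzero ideal $(h_j)$, with $Gal(\mathcal{K}_p\cap\mathcal{L}_p/\Q(\mu_{p^\infty}))^{(j)}\cong\Z_p[[\Gamma_1]]/(h_j)$, and Corollary~\ref{cor:B} gives $(c_jh_j)=(\zeta_p^{(j)})$, so $h_j\mid\zeta_p^{(j)}$. Assuming Vandiver, the cyclotomic units have index prime to $p$ in the global units of $\Q(\mu_{p^n})^+$ (analytic class number formula together with $p\nmid h^+$), and in the minus part every everywhere-unramified abelian pro-$p$ extension of $\Q(\mu_{p^\infty})$ is obtained by Kummer theory from units (by reflection: a defining element may be taken to be a unit since the plus class group has trivial $p$-part); consequently $\mathcal{K}_p$ contains the minus part of $\mathcal{L}_p$, which under Vandiver is all of $\mathcal{L}_p$, so $(h_j)$ is the characteristic ideal of the $\omega^j$-component of $Gal(\mathcal{L}_p/\Q(\mu_{p^\infty}))$. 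Finally the analytic class number formula for the fields $\Q(\mu_{p^n})$ --- equivalently Iwasawa's description of $L_p$ through class numbers --- together with $\mu=0$ (Ferrero--Washington) forces the $\lambda$- and $\mu$-invariants of $h_j$ to equal those of $\zeta_p^{(j)}$; since $h_j\mid\zeta_p^{(j)}$ this yields $(h_j)=(\zeta_p^{(j)})$, hence $c_j$ is a unit and $\mathfrak{A}(\widearrow{10})^\times$ is onto.

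The hard part is the identification, in the last step, of $\mathcal{K}_p\cap\mathcal{L}_p$ with the whole of $\mathcal{L}_p$ in the minus part: that the cyclotomic-unit Kummer tower already exhausts the everywhere-unramified one. This is false for the plus part and, for irregular $p$, genuinely nontrivial; it is precisely here that the Vandiver hypothesis cannot be dispensed with. Everything else --- the cyclicity of $Gal(\mathcal{K}_p/\Q(\mu_{p^\infty}))$, the reduction of $\mathfrak{A}(\widearrow{10})^\times$ to multiplication by $c_j$, and the passage from $h_j\mid\zeta_p^{(j)}$ to equality of ideals --- is formal given Proposition~\ref{prop:A}, Corollary~\ref{cor:B} and the Ihara formula, provided one takes routine care that the $\omega$-eigenspace decomposition is compatible with all maps involved and that $\mathfrak{A}(\widearrow{10})^\times$ indeed lands in $\Z_p[[\Z_p^\times]]^{-}$.
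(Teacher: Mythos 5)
Your proposal takes a genuinely different and considerably heavier route than the paper, and in a way that undercuts the logical architecture the paper is building. The paper's own proof of this lemma is short and elementary: by Lemma~\ref{lem:3.3}, the map $z_p^-$ (which is $\mathfrak{A}(\widearrow{10})^\times$ on $Gal(\mathcal{K}_p/\Q(\mu_{p^\infty}))$) is an isomorphism if and only if $p \nmid |\mathcal{E}_n/\mathcal{C}_n|$ for all $n$; that equivalence is pure Kummer theory plus the multiplicative independence of cyclotomic units (Washington, Theorem~8.9) and the Bass theorem; and Proposition~\ref{prop:6.3} derives the index condition directly from Vandiver using only finite-level facts about cyclotomic fields ($E_n = \mu_{p^n}E_n^+$, $C_n = \mu_{p^n}C_n^+$, Washington~Cor.~10.6 and Thm.~8.2). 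In particular, the paper's proof of Lemma~\ref{lem:C} uses \emph{neither} Proposition~\ref{prop:A}, \emph{nor} Corollary~\ref{cor:B}, \emph{nor} the Ihara formula, \emph{nor} the Iwasawa-theoretic analytic class number formula, \emph{nor} Ferrero--Washington. That is precisely the point the authors make after stating the lemma: the known proofs use the Ihara power series, and this one is meant to be simpler.

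Your argument, by contrast, pulls in all of that machinery. You invoke Corollary~\ref{cor:B} to place a free rank-one submodule inside $Gal(\mathcal{K}_p/\Q(\mu_{p^\infty}))^{(j)}$, then prove $\mathcal{K}_p \supseteq \mathcal{L}_p^-$ (which is Lemma~\ref{lem:6.4.} of the paper, used there for a different purpose), and finally close the argument with ``divisibility $h_j \mid \zeta_p^{(j)}$ plus the analytic class number formula plus $\mu = 0$ forces equality of ideals.'' That last step is exactly the standard deduction of the main conjecture from a one-sided divisibility, so your proof of Lemma~\ref{lem:C} contains, as a sub-argument, a proof of the main conjecture. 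Since the paper's stated goal is to derive the main conjecture \emph{from} Lemma~\ref{lem:C} together with Corollary~\ref{cor:B}, your route (while not strictly circular, as Corollary~\ref{cor:B} is independent of Lemma~\ref{lem:C}) collapses the intended logic: the lemma would then be a corollary of the main conjecture rather than an ingredient in a new proof of it.

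There is also a concrete gap in the mechanics. You reduce to showing each $c_j$ is a unit by asserting that $Gal(\mathcal{K}_p/\Q(\mu_{p^\infty}))^{(j)}$ is a cyclic $\Z_p[[\Gamma_1]]$-module, arguing from cyclicity of the Kummer radical. But the Galois group is the (twisted) Pontryagin dual of the radical, and the dual of a cyclic module over a two-dimensional Iwasawa algebra is not automatically cyclic; you would need to argue this, and the paper never does because its proof works level-by-level via $\alpha_n$ and needs no cyclicity at all. This is another place where the paper's finite-level Kummer-theoretic approach is genuinely cleaner: Lemma~\ref{lem:3.2} gives surjectivity of $\alpha_n$ directly from $p\nmid|\mathcal{E}_n/\mathcal{C}_n|$, and Lemma~\ref{lem:3.3} passes to the limit, sidestepping any structure theory for the profinite Galois module.
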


This consequence of the Vandiver conjecture is in fact well known (see \cite[section II, Theorem 6]{Ihara}, \cite[Theorem (7.33)]{Coleman}, \cite[Theorem C]{IK}  and \cite[corollary on page 62]{IS}). The proofs there use  the Ihara power series from \cite{Ihara}. The proof presented in this note is  simpler, and moreover it can be applied in more situations,
for example to study surjectivity of Soul\'e classes for roots of unity or more generally of $l$-adic Galois polylogarithms introduced in \cite{W0}, though in fact it has some common points  with the alternative proof given on page 333 in \cite{IK}.
The proof of the main conjecture over $\Q$ is then an easy consequence of corollary \ref{cor:B} and lemma \ref{lem:C}.
Our last result is the generalization of the Ihara's formula for all integers $ m $. The case $m=1$ is the most interesting as then the residue at $1$ of the $p$-adic zeta function appears.

\section{The cocycle associated to the path from $\widearrow{01}$ to $\widearrow{10}$}

Let $V_n = \mathbb{P}^1_{\overline{\Q}} \setminus (\{0, \infty\} \cup \mu_{p^n})$ and let $\gamma_n$ be a path on $V_n$ from $\widearrow{01}$ to $\frac{1}{p^n} \widearrow{10}$ along the interval $[0,1]$. Let $\pi_1(V_n, \widearrow{01})$ denote the \'etale pro-$p$ fundamental group of $V_n$ based at $\widearrow{01}$. We denote by $x_n$ and by $y_{k,n}$ for $0 \leq k < p^n$ the standard topological generators of $\pi_1(V_n, \widearrow{01})$, loops around $0$ and $\xi_{p^n}^k$ for $0 \leq k < p^n$ respectively (see \cite[section 1]{W1}).

Let $\sigma \in G_{\Q}$. Then, written additively,
\[
\gamma_n^{-1} \sigma(\gamma_n) \equiv \sum_{i=0}^{p^n-1} \alpha_i^{(n)}(\sigma) y_{i,n} \ \text{modulo } (\pi_1(V_n, \widearrow{01}), \pi_1(V_n, \widearrow{01})),
\]
for some coefficients $\alpha_i^{(n)}(\sigma) \in \Z_p$.

\begin{prop}[see \cite{NW}, \cite{W1} and \cite{W2}] For any $\sigma \in G_{\Q}$ the family of functions 
\[
\{  \mathfrak{A}^{(n)}(\widearrow{10})(\sigma) : \Z/p^n\Z \rightarrow \Z_p, i \mapsto \alpha_{i}^{(n)}(\sigma)\}_{n \in \NN}
\]
defines a measure, denoted $\mathfrak{A}(\widearrow{10})(\sigma)$, on $\Z_p$ with values in $\Z_p$.
\label{prop:1.1}
\end{prop}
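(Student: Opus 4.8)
The plan is to show that the assignment $i \mapsto \alpha_i^{(n)}(\sigma)$ is \emph{distributive} with respect to the natural projections $\Z/p^{n+1}\Z \twoheadrightarrow \Z/p^n\Z$, i.e. that
\[
\alpha_i^{(n)}(\sigma) \;=\; \sum_{\substack{0 \le j < p^{n+1} \\ j \equiv i \pmod{p^n}}} \alpha_j^{(n+1)}(\sigma),
\]
which is precisely the cocycle/compatibility condition needed for the family $\{\mathfrak{A}^{(n)}(\widearrow{10})(\sigma)\}_n$ to define a $\Z_p$-valued measure on $\Z_p = \varprojlim_n \Z/p^n\Z$. First I would set up the covering map $\phi_{n+1,n}\colon V_{n+1} \to V_n$ given by $z \mapsto z$ on the underlying variety but viewed through the inclusion $\mu_{p^n} \subset \mu_{p^{n+1}}$ — more precisely, the relevant map is the $p$-power map or the natural map coming from the inclusion of divisors — and record its effect on fundamental groups: the loop $y_{i,n}$ around $\xi_{p^n}^i$ pulls back to the product $\prod_{j \equiv i} y_{j,n+1}$ of loops around the points $\xi_{p^{n+1}}^j$ lying over $\xi_{p^n}^i$ (this is the standard behaviour of generators under such coverings, as in \cite[section 1]{W1}). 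Compatibility of the tangential base points $\widearrow{01}$ and the paths $\gamma_n$ along $[0,1]$ under this map is what makes the path $\gamma_{n+1}$ lift/project to $\gamma_n$, so that $\gamma_{n+1}^{-1}\sigma(\gamma_{n+1})$ maps to $\gamma_n^{-1}\sigma(\gamma_n)$.

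The key step is then to transport the congruence defining the $\alpha_j^{(n+1)}(\sigma)$ through the induced map on abelianized fundamental groups. Applying the functorially induced homomorphism $\pi_1(V_{n+1},\widearrow{01})^{\ab} \to \pi_1(V_n,\widearrow{01})^{\ab}$ to
\[
\gamma_{n+1}^{-1}\sigma(\gamma_{n+1}) \equiv \sum_{j=0}^{p^{n+1}-1} \alpha_j^{(n+1)}(\sigma)\, y_{j,n+1} \pmod{(\pi_1,\pi_1)}
\]
sends the left side to $\gamma_n^{-1}\sigma(\gamma_n)$, hence to $\sum_i \alpha_i^{(n)}(\sigma) y_{i,n}$, and sends the right side to $\sum_j \alpha_j^{(n+1)}(\sigma)\big(\sum_{i \equiv \text{?}} \cdots\big)$ — collecting the images of the $y_{j,n+1}$ according to which $y_{i,n}$ they land on gives exactly the distribution relation above, once one checks the loop around $0$ (the generator $x$) plays no role because it has coefficient $0$ in both expansions. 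One should also verify that the total mass $\sum_i \alpha_i^{(n)}(\sigma)$ is bounded independently of $n$ (in fact it equals a fixed value, essentially the abelianized image in the $y$-direction coming from $\sigma$), so that the resulting finitely additive function on compact opens of $\Z_p$ is a genuine $\Z_p$-valued measure and not merely a distribution; boundedness is automatic here since all $\alpha_i^{(n)}(\sigma) \in \Z_p$ and the abelianization of $\gamma_n^{-1}\sigma(\gamma_n)$ in the $y$-part is a single element of $\Z_p$ whose "weight-filtration degree one" component is $\sum_i \alpha_i^{(n)}(\sigma) \cdot (\text{sum of the }y_{i,n})$.

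The main obstacle I anticipate is pinning down the exact covering map and the precise matching of tangential base points and paths so that $\gamma_{n+1}$ genuinely maps to $\gamma_n$: tangential base points are delicate, and one must be careful that the chosen normalization $\frac{1}{p^n}\widearrow{10}$ at the other end is consistent across levels (this rescaling is presumably there precisely to make the paths compatible). Once the geometry of the tower $\{V_n\}$ and its base points is correctly described, the algebra of the distribution relation is a formal consequence of functoriality of $\pi_1$ and of the known pullback formula for the standard generators; the measure-theoretic conclusion then follows from the standard identification of $\Z_p$-valued measures on $\Z_p$ with compatible systems of $\Z_p$-valued functions on the finite quotients $\Z/p^n\Z$. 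For the detailed verification of the base-point compatibility we refer to \cite{NW}, \cite{W1} and \cite{W2}.
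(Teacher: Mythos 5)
The paper does not actually supply a proof of Proposition~\ref{prop:1.1}: the statement is recorded with a pointer to \cite{NW}, \cite{W1} and \cite{W2}, so there is no in-house argument to compare yours against. Your overall strategy is nevertheless the standard one and is, in outline, correct: you identify $\Z_p$-valued measures on $\Z_p=\varprojlim \Z/p^n\Z$ with families of $\Z_p$-valued functions on the quotients satisfying the distribution relation $\alpha_i^{(n)}(\sigma)=\sum_{j\equiv i\,(\bmod\,p^n)}\alpha_j^{(n+1)}(\sigma)$, you propose to obtain that relation from functoriality of $\pi_1$ along the tower $\{V_n\}$, and you correctly note that integrality $\alpha_i^{(n)}(\sigma)\in\Z_p$ makes boundedness automatic, so distribution already implies measure.

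The one point you must commit to, and on which you are currently wavering, is which map between consecutive levels you use; the two options you mention are not interchangeable. The open immersion $V_{n+1}\hookrightarrow V_n$ (your ``$z\mapsto z$'') does \emph{not} give what you want: it sends $y_{j,n+1}$ to a conjugate of $y_{j/p,n}$ when $p\mid j$ and kills $y_{j,n+1}$ otherwise, which is a restriction-type identity rather than a distribution relation, and moreover the endpoints $\frac{1}{p^{n+1}}\widearrow{10}$ and $\frac{1}{p^n}\widearrow{10}$ do not agree under inclusion, so the cocycle cannot even push forward. The correct map is the degree-$p$ unramified covering $\pi\colon V_{n+1}\to V_n$, $z\mapsto z^p$: the fibre over $\xi_{p^n}^i$ is precisely $\{\xi_{p^{n+1}}^j : j\equiv i \pmod{p^n}\}$, so $\pi_*$ sends $y_{j,n+1}$ to a conjugate of $y_{j\bmod p^n,\,n}$, and collecting terms yields exactly your relation. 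The rescaling $\frac{1}{p^n}\widearrow{10}$ is built into the definition of $\gamma_n$ for precisely this reason: since the derivative of $z\mapsto z^p$ at $z=1$ is $p$, one has $\pi\bigl(\frac{1}{p^{n+1}}\widearrow{10}\bigr)=\frac{1}{p^n}\widearrow{10}$, so $\pi\circ\gamma_{n+1}=\gamma_n$. The genuinely delicate residual point is, as you anticipate, the base point $\widearrow{01}$ at $0$, where $t\mapsto t^p$ is not an isomorphism of formal parameters; that is where the cited references carry the real technical weight. Finally, since $\pi_*(x_{n+1})$ is conjugate to $x_n^p$, an $x$-coefficient would be multiplied by $p$ at each stage of the tower, which is incompatible with a fixed value in $\Z_p$ unless it is $0$; this is consistent with, and explains, the absence of $x_n$ from the paper's expansion, rather than being something one can simply assume.
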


It follows from the proposition that we get a continuous function 
\[
\mathfrak{A}(\widearrow{10}) : G_{\Q} \rightarrow \Z_p[[\Z_p]].
\]
Let $\tau, \sigma \in G_{\Q}$. It follows from the identity $\gamma_n^{-1}((\tau \sigma)(\gamma_n)) = (\gamma_n^{-1}\tau(\gamma_n)) \cdot \tau(\gamma_n^{-1}\sigma(\gamma_n))$ that 
\begin{equation} \label{eq:1.1}
\alpha_i^{(n)}(\tau \sigma) = \alpha_i^{(n)}(\tau) + \chi(\tau) \alpha_{\chi^{-1}(\tau)i}^{(n)}(\sigma).
\end{equation}
From the last two identities we get 
\begin{equation} \label{eq:1.2}
\alpha_i^{(n)}(\tau \sigma \tau^{-1})= \alpha_i^{(n)}(\tau) - \chi(\sigma) \alpha_{\chi^{-1}(\sigma)i}^{(n)}(\tau) + \chi(\tau) \alpha_{\chi^{-1}(\tau)i}^{(n)}(\sigma). 
\end{equation} 
This identity (\ref{eq:1.2}) motivates the next definition.

\begin{df} Define the action of $\Gamma = \Z_p^{\times}$, hence the action of $G_{\Q}$ via the $p$-cyclotomic character, on $\Z_p[[\Z_p]]$ 
by the formula
\begin{equation} \label{eq:1.3}
c(\sum_{i=1}^m a_i[x_i]) := \sum_{i=1}^m ca_i [cx_i]
\end{equation}
for elements of $\Z_p[\Z_p]$ and we extend by continuity to the action on $\Z_p[[\Z_p]]$. 
\end{df}

It follows immediately from the identity (\ref{eq:1.1}) that the function 
\[
\mathfrak{A}(\widearrow{10}) :G_{\Q} \rightarrow \Z_p[[\Z_p]]
\]
is a cocycle.

Let $a$ belong to a number field $K$ contained in $\overline \Q$. We denote by 
\[
 \kappa (a):G_{K(\mu _{p^\infty})}\rightarrow \Z_p
\]
the Kummer character associated to $a$.

\begin{prop}[see Proposition 2 in \cite{W2}] \label{prop:1.2} Let $\sigma \in G_{\Q(\mu_{p^{\infty}})}$. Then 
\[
\mathfrak{A}^{(n)}(\widearrow{10})(\sigma) = \kappa(\frac{1}{p^n})(\sigma)[0] + \sum_{i=1}^{p^n-1} \kappa(1-\xi_{p^n}^i)(\sigma)[i].
\]
\end{prop}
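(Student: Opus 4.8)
The hypothesis $\sigma\in G_{\Q(\mu_{p^\infty})}$ is what makes the statement accessible. Under it $\chi(\sigma)=1$, so by the cocycle identity (\ref{eq:1.1}) each map $\sigma\mapsto\alpha_i^{(n)}(\sigma)$ is a \emph{homomorphism} $G_{\Q(\mu_{p^\infty})}\to\Z_p$, and the task reduces to identifying it with the Kummer character $\kappa(1-\xi_{p^n}^i)$ for $0<i<p^n$, and with $\kappa(1/p^n)$ for $i=0$. I would obtain this by pushing the path--torsor cocycle $\gamma_n^{-1}\sigma(\gamma_n)$ forward along suitable morphisms $V_n\to\mathbb{G}_m$ and then invoking the classical dictionary between $\mathbb{G}_m$--path torsors and Kummer theory.

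First, for $0<i<p^n$ let $\psi_i\colon V_n\to\mathbb{G}_m$ be (a nonzero scalar multiple of) the rational function $z\mapsto z-\xi_{p^n}^i$, and let $\psi_0\colon z\mapsto 1-z$; each $\psi_i$ has divisor $[\xi_{p^n}^i]-[\infty]$ on $\mathbb{P}^1$, so $\dlog\psi_i$ has residue $1$ at $\xi_{p^n}^i$ and residue $0$ at the remaining punctures of $V_n$, whence the induced map $\pi_1(V_n,\widearrow{01})^{\ab}\to\pi_1(\mathbb{G}_m)^{\ab}\cong\Z_p(1)$ sends $y_{i,n}$ to a generator and kills $x_n$ and the $y_{j,n}$ with $j\neq i$. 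By functoriality of the \'etale path torsor, $\psi_{i,*}\bigl(\gamma_n^{-1}\sigma(\gamma_n)\bigr)=\bigl(\psi_{i,*}\gamma_n\bigr)^{-1}\sigma\bigl(\psi_{i,*}\gamma_n\bigr)$, and by the choice of $\psi_i$ its class in $\Z_p(1)$ is $\alpha_i^{(n)}(\sigma)$ times the chosen generator (up to the global sign fixed by the orientation conventions of \cite{W1}).

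It then remains to evaluate the images of the two tangential base points and feed them into the standard description of the $\mathbb{G}_m$--torsor: for a path in $\mathbb{G}_m$ joining $a$ to $b$, where a regular point counts as itself and the tangential base point at the puncture $0$ with tangent vector $t$ counts as ``$t$'', the abelianized cocycle evaluated on $G_{\Q(\mu_{p^\infty})}$ is $\kappa(b)-\kappa(a)$; on $G_{\Q(\mu_{p^\infty})}$ this depends only on the endpoints, since replacing the path changes the cocycle by $(\chi(\sigma)-1)$ times an element of $\Z_p(1)$, which vanishes. Now $\psi_i(\widearrow{01})$ lies over $z=0$, hence is the regular point $\psi_i(0)$, a root of unity; $\psi_i(\tfrac{1}{p^n}\widearrow{10})$ for $i\neq 0$ lies over $z=1$, hence is the regular point $1-\xi_{p^n}^i$; and $\psi_0(\tfrac{1}{p^n}\widearrow{10})$ lies over $z=1$, which $\psi_0$ carries to the puncture $0$, the tangent vector of $\tfrac{1}{p^n}\widearrow{10}$ being sent by $d\psi_0$ to $\tfrac{1}{p^n}$ times the standard tangent there. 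Since Kummer characters of roots of unity are trivial on $G_{\Q(\mu_{p^\infty})}$, the dictionary gives $\alpha_i^{(n)}(\sigma)=\kappa(1-\xi_{p^n}^i)(\sigma)$ for $0<i<p^n$ and $\alpha_0^{(n)}(\sigma)=\kappa(1/p^n)(\sigma)$, which is the assertion.

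The work is essentially bookkeeping, and I expect the main nuisance to be twofold. First, making the $\mathbb{G}_m$--dictionary precise: pushing tangential base points forward along $\psi_i$ by pairing $d\psi_i$ with tangent vectors, keeping straight the Deligne convention that a tangential base point at a puncture ``is'' its tangent vector, and using path--independence of the cocycle on $G_{\Q(\mu_{p^\infty})}$ (this is exactly where $\chi(\sigma)=1$ enters). Second, pinning down all normalizations in \cite{W1} --- the choice of $\xi_{p^n}$, the orientation of $\gamma_n$, the topological generators, and the scaling $\tfrac{1}{p^n}\widearrow{10}$ --- so that no spurious sign or root of unity survives and the exponents come out as $+i$; the case $i=0$, where the endpoint genuinely hits the puncture, is then handled separately but presents no real difficulty. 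All of this is carried out in \cite{W2}.
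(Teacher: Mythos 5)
The paper itself gives no proof of Proposition~\ref{prop:1.2}; it simply cites Proposition~2 of \cite{W2}. Your sketch --- push the path cocycle forward along the degree-one maps $\psi_i\colon V_n\to\mathbb{G}_m$, use that over $G_{\Q(\mu_{p^\infty})}$ the abelianized $\mathbb{G}_m$-cocycle is path-independent and given by the ratio of the endpoints, evaluate the images of the two tangential base points, and discard the root-of-unity contributions --- is correct and is exactly the Kummer-theoretic reduction underlying Wojtkowiak's treatment in \cite{W2} and \cite{NW}. You rightly flag that the only real work is the normalization bookkeeping (choice of $\xi_{p^n}$, orientation of $\gamma_n$, scaling of $\widearrow{10}$), and it is worth noting in that regard that the paper itself uses Proposition~\ref{prop:1.2} in the proof of Proposition~\ref{prop:3.1} with $\kappa(1-\xi_{p^n}^{-i})$ rather than $\kappa(1-\xi_{p^n}^{i})$; the two agree on $G_{\Q(\mu_{p^\infty})}$ because they differ by the Kummer character of a root of unity, which is precisely the kind of convention-dependent ambiguity your discussion anticipates.
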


We recall from the introduction that 
\[
\mathcal{K}_p:= \Q_p(\mu_{p^{\infty}})((1-\xi_{p^n}^i)^{1/p^n} : 0 < i < p^n, n,m \in \NN).
\]
Observe that $\mathcal{K}_p$ is an abelian, unramified outside $p$, pro-$p$ extension of $\Q(\mu_{p^{\infty}})$. 
Let $\tau \in Gal(\Q(\mu_{p^{\infty}})/\Q)$ and $\overline{\tau}$ be a lifting of $\tau$ to $Gal(\mathcal{K}_p/\Q)$. 
The group $\Gamma =Gal(\Q(\mu_{p^{\infty}})/\Q) \cong \Z_p^{\times}$ acts on $Gal(\mathcal{K}_p/\Q(\mu_{p^{\infty}}))$ by the standard formula 
\[
\tau(\sigma) = \overline{\tau} \sigma \overline{\tau}^{-1}.
\]
It follows from proposition \ref{prop:1.2} that the map $\mathfrak{A}(\widearrow{10}) : G_{\Q} \rightarrow \Z_p[[\Z_p]]$ factors through the Galois group $Gal(\mathcal{K}_p/\Q)$. We denote the restriction of $\mathfrak{A}(\widearrow{10}) : Gal(\mathcal{K}_p/\Q) \rightarrow \Z_p[[\Z_p]]$ to the subgroup $Gal(\mathcal{K}_p/\Q(\mu_{p^{\infty}}))$ also by $\mathfrak{A}(\widearrow{10})$. 

\begin{prop} \label{prop:1.3} The map 
\[
\mathfrak{A}(\widearrow{10}) : Gal(\mathcal{K}_p/\Q(\mu_{p^{\infty}})) \rightarrow \Z_p[[\Z_p]]
\]
is a continuous injective homomorphism of $\Z_p[[\Gamma]]$-modules. 
\end{prop}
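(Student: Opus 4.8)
The plan is to establish the three claimed properties --- that $\mathfrak{A}(\widearrow{10})$ is a continuous homomorphism on $Gal(\mathcal{K}_p/\Q(\mu_{p^\infty}))$, that it is $\Z_p[[\Gamma]]$-equivariant, and that it is injective --- in that order. First, the homomorphism property: since the cocycle relation \eqref{eq:1.1} on $G_\Q$ specializes on $G_{\Q(\mu_{p^\infty})}$ to $\alpha_i^{(n)}(\tau\sigma) = \alpha_i^{(n)}(\tau) + \alpha_i^{(n)}(\sigma)$ (because $\chi(\tau)=1$ for $\tau \in G_{\Q(\mu_{p^\infty})}$), the restriction of the cocycle to the kernel of the cyclotomic character is literally additive, i.e. a homomorphism. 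Continuity is inherited from the continuity of $\mathfrak{A}(\widearrow{10})$ already recorded after Proposition \ref{prop:1.1}, and the fact that it factors through $Gal(\mathcal{K}_p/\Q)$ was just established using Proposition \ref{prop:1.2}, so the map on $Gal(\mathcal{K}_p/\Q(\mu_{p^\infty}))$ is well defined and continuous.

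Next, the $\Z_p[[\Gamma]]$-equivariance. The module structure on the source is $\tau(\sigma) = \overline\tau \sigma \overline\tau^{-1}$, and the module structure on the target $\Z_p[[\Z_p]]$ is the one from Definition following \eqref{eq:1.3}, i.e. $c \cdot [x] = c[cx]$. The compatibility is exactly what identity \eqref{eq:1.2} encodes: for $\sigma \in G_{\Q(\mu_{p^\infty})}$ one has $\chi(\sigma)=1$, so \eqref{eq:1.2} reads $\alpha_i^{(n)}(\tau\sigma\tau^{-1}) = \alpha_i^{(n)}(\tau) - \alpha_i^{(n)}(\tau) + \chi(\tau)\alpha_{\chi^{-1}(\tau)i}^{(n)}(\sigma) = \chi(\tau)\,\alpha_{\chi^{-1}(\tau)i}^{(n)}(\sigma)$, which is precisely the statement that $\mathfrak{A}^{(n)}(\widearrow{10})(\overline\tau\sigma\overline\tau^{-1}) = \chi(\tau)\cdot\mathfrak{A}^{(n)}(\widearrow{10})(\sigma)$ in the sense of the $\Gamma$-action on measures. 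Passing to the limit over $n$ gives the equivariance on $\Z_p[[\Z_p]]$; here one should note that the formula is independent of the choice of lift $\overline\tau$, since two lifts differ by an element of $Gal(\mathcal{K}_p/\Q(\mu_{p^\infty}))$, on which $\mathfrak{A}(\widearrow{10})$ is already a homomorphism, so conjugation by such an element acts trivially after we have additivity --- more carefully, one checks directly that the $\alpha$'s are unchanged using \eqref{eq:1.1} and \eqref{eq:1.2}.

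Finally, injectivity, which I expect to be the only step requiring a genuine idea rather than bookkeeping. Suppose $\sigma \in Gal(\mathcal{K}_p/\Q(\mu_{p^\infty}))$ has $\mathfrak{A}(\widearrow{10})(\sigma) = 0$; by Proposition \ref{prop:1.2} this means $\kappa(1-\xi_{p^n}^i)(\sigma) = 0$ for all $n$ and all $0 < i < p^n$, and also $\kappa(1/p^n)(\sigma)=0$ (the latter being automatic once $\sigma$ fixes $\Q(\mu_{p^\infty})$, since $\kappa(1/p^n)$ then differs from $\kappa(p)$ only up to the relevant roots of unity; in any case it contributes nothing new to $\mathcal{K}_p$). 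But $\mathcal{K}_p$ is by its very definition the compositum over all $n$ and all $0<i<p^n$ of the fields $\Q_p(\mu_{p^\infty})\big((1-\xi_{p^n}^i)^{1/p^m}\big)$, and the Kummer character $\kappa(1-\xi_{p^n}^i)$ cutting out the corresponding $\Z_p$-extension inside $\mathcal{K}_p$. Vanishing of all these Kummer characters at $\sigma$ therefore says that $\sigma$ acts trivially on every $(1-\xi_{p^n}^i)^{1/p^m}$, hence on all of $\mathcal{K}_p$, so $\sigma = 1$. The one point to handle carefully is that the functions $i \mapsto \kappa(1-\xi_{p^n}^i)(\sigma)$ for varying $n$ are the coordinates of a \emph{measure}, so one must argue that the measure being zero forces each individual Kummer character value to vanish --- this is immediate from the definition of the measure via its values on the characteristic functions of cosets $i + p^n\Z_p$, which recover exactly $\alpha_i^{(n)}(\sigma)$.
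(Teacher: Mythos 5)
Your proof follows the same route as the paper's: additivity from the specialization of the cocycle identity \eqref{eq:1.1} to $G_{\Q(\mu_{p^\infty})}$ (where $\chi$ is trivial), $\Gamma$-equivariance from \eqref{eq:1.2} compared against the action in \eqref{eq:1.3}, and injectivity from Proposition \ref{prop:1.2} together with the definition of $\mathcal{K}_p$ as the field generated by $p$-power roots of the $1-\xi_{p^n}^i$. The only differences are in exposition: you spell out the well-definedness of the $\Gamma$-action under change of lift $\overline\tau$ (true, and immediate since $Gal(\mathcal{K}_p/\Q(\mu_{p^\infty}))$ is abelian) and you unpack the injectivity step, which the paper states in one sentence; your parenthetical about $\kappa(1/p^n)$ is a little muddled as written, but the underlying point is fine — vanishing of the measure directly gives $\kappa(1/p^n)(\sigma)=0$ as the $[0]$-coefficient, and in any case $\kappa(1/p^n)$ is a $\Z_p$-linear combination of the $\kappa(1-\xi_{p^n}^i)$ since $p=\prod_{i=1}^{p-1}(1-\xi_p^i)$.
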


\begin{proof} The formula (\ref{eq:1.1}) implies that $\mathfrak{A}(\widearrow{10})$ is a morphism of $\mathbb{Z}_p$ modules. 
It follows from equation (\ref{eq:1.2}) that that for $\sigma \in G_{\mathbb{Q}(\mu_{p^\infty})}$ we have 
\[
\alpha_i^{(n)}(\tau \sigma \tau^{-1}) = \chi_{(\tau)} \alpha_{\chi^{-1}(\tau)i}^{(n)}(\sigma)
\]
Therefore 
\[
\sum_{i=0}^{p^n-1} \alpha_i^{(n)}(\tau \sigma \tau^{-1})[i] = \sum_{i=0}^{p^n-1}\chi(\tau) \alpha_{\chi(\tau)^{-1} i}^{(n)}(\sigma)[i] = \chi(\tau) \sum_{j=0}^{p^n-1} \alpha_j^{(n)}(\sigma) [\chi(\tau) j].
\]
Hence it follows that $\mathfrak{A}(\widearrow{10})$ is a $\Gamma$-map. The injectivity following from the definition of the field $\mathcal{K}_p$ and the explicit description of $\mathfrak{A}(\widearrow{10})$ given in proposition \ref{prop:1.2}. 
\end{proof}

\section{The restriction to $\Z_p^{\times}$} 

The multiplicative group $\Z_p^{\times}$ is a closed and open subset of $\Z_p$. It follows from the formula in equation (\ref{eq:1.3}) that the restriction map 
\[
\mathcal{R}: \Z_p[[\Z_p]] \rightarrow \Z_p[[\Z_p^{\times}]],
\]
which associates to a measure $\mu$ on $\Z_p$ its restriction to $\Z_p^{\times}$, is a morphism of $\Z_p[[\Gamma]]$-modules. We shall also denote the measure $\mathcal{R}(\mu)$ by $\mu^{\times}$. 
For $\sigma \in Gal (\overline \Q/\Q(\mu _{p^\infty }))$, we denote the restriction of the measure $\mathfrak{A}(\widearrow{10})(\sigma )$ to $\Z_p^\times$, by $\mathfrak{A}(\widearrow{10})^\times(\sigma )$. We denote the composition 
\[
z_p : Gal(\mathcal{K}_p/\Q(\mu_{p^{\infty}})) \xrightarrow{\mathfrak{A}(\widearrow{10})} \Z_p[[\Z_p]] \xrightarrow{\mathcal{R}} \Z_p[[\Z_p^{\times}]]
\]
by $z_p$. The action of $-1 \in \Gamma$ is an involution, hence $Gal(\mathcal{K}_p/\Q(\mu_{p^{\infty}}))$ and $\Z_p[[\Z_p^{\times}]]$ are the direct sums of their $+$ and $-$ parts. 
Moreover, $z_p$ induces morphism of $\Z_p[[\Gamma]]$-modules 
\[
z_p^{\epsilon} : Gal(\mathcal{K}_p/\Q(\mu_{p^{\infty}}))^{\epsilon} \rightarrow \Z_p[[\Z_p^{\times}]]^{\epsilon},
\]
for $\epsilon \in \{+, -\}$. 

\begin{prop}[See also \cite{W2}, corollary 1] \label{prop:3.1} We have 
\begin{itemize}
\item[(i)] The morphisms $z_p$ and $z_p^{-}$ are injective.
\item[(ii)] The group $Gal(\mathcal{K}_p/\Q(\mu_{p^{\infty}}))^+  = 0$. 
\end{itemize}

\end{prop}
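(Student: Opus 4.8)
The plan is to read both claims off the explicit description of $\mathfrak{A}(\widearrow{10})$ from Proposition \ref{prop:1.2} together with the $\Gamma$-equivariance established in Proposition \ref{prop:1.3}, combined with standard facts on Kummer theory and the cyclotomic units. For (i), injectivity of $z_p$ is essentially a sharpening of the injectivity already proved for $\mathfrak{A}(\widearrow{10})$ in Proposition \ref{prop:1.3}: one must check that no nonzero $\sigma \in Gal(\mathcal{K}_p/\Q(\mu_{p^\infty}))$ is killed merely by restricting the measure to $\Z_p^\times$, i.e. that the ``boundary'' coefficient $\kappa(1/p^n)(\sigma)[0]$ (the only part supported on $p\Z_p$ after one level) cannot absorb a nontrivial element. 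Since $\mathcal{K}_p$ is unramified outside $p$ and generated by $p$-power roots of the $1-\xi_{p^n}^i$ with $0<i<p^n$, an element $\sigma$ acting trivially on all these roots (equivalently with $\kappa(1-\xi_{p^n}^i)(\sigma)=0$ for all $i,n$) is trivial; so the restricted cocycle, whose level-$n$ values record exactly the $\kappa(1-\xi_{p^n}^i)(\sigma)$ for $0<i<p^n$, is already injective, and a fortiori $z_p^-$ is injective on the $-$-part.

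For (ii), I would argue that the $+$-part of $Gal(\mathcal{K}_p/\Q(\mu_{p^\infty}))$ vanishes because $\mathcal{K}_p$ is, by construction, generated over $\Q(\mu_{p^\infty})$ by $p$-power roots of the cyclotomic numbers $1-\xi_{p^n}^i$, and these generate (a finite-index subgroup of) the global units / cyclotomic units of $\Q(\mu_{p^n})^+$ after the natural normalizations. Concretely: $Gal(\mathcal{K}_p/\Q(\mu_{p^\infty}))$ is, via Kummer theory, dual to the $\Z_p$-module generated by the $1-\xi_{p^n}^i$ inside $\Q(\mu_{p^\infty})^\times \otimes \Q_p/\Z_p$ (or its Tate twist), and complex conjugation $c=-1\in\Gamma$ acts on the cyclotomic number $1-\xi_{p^n}^i$ by sending it to $1-\xi_{p^n}^{-i}$, up to a unit that is a power of $\xi_{p^n}$; after passing to the relevant quotient the class of $1-\xi_{p^n}^i$ is fixed by $c$ (the cyclotomic numbers are, up to roots of unity, real, i.e. lie in $\Q(\mu_{p^n})^+$). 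Hence the entire Kummer module lives in the $c$-invariant part, so its Pontryagin dual $Gal(\mathcal{K}_p/\Q(\mu_{p^\infty}))$ is concentrated in the part where the dual action is trivial; with the Tate twist by $\chi$ coming from the $\Gamma$-action in \eqref{eq:1.3}, ``$c$ acts trivially on the module'' translates into ``$c=-1$ acts by $-1$ on the Galois group'', i.e. the group is pure of sign $-$, giving $Gal(\mathcal{K}_p/\Q(\mu_{p^\infty}))^+=0$. Equivalently and more cleanly: by Proposition \ref{prop:1.2} each level-$n$ value of $\mathfrak{A}(\widearrow{10})(\sigma)$ (after discarding the $[0]$ term) is the vector $(\kappa(1-\xi_{p^n}^i)(\sigma))_{0<i<p^n}$, which satisfies the symmetry relation coming from $c(1-\xi_{p^n}^i)=1-\xi_{p^n}^{-i}$ up to a root of unity; combined with the $\Gamma$-equivariance of Proposition \ref{prop:1.3} this forces the image, hence (by injectivity) the source, to satisfy $c=-1$, so the $+$-part is zero.

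The main obstacle I anticipate is handling the ``root of unity'' discrepancy carefully in the symmetry $c(1-\xi_{p^n}^i) = 1-\xi_{p^n}^{-i} = -\xi_{p^n}^{-i}(1-\xi_{p^n}^i)$: one needs that the factor $-\xi_{p^n}^{-i}$ contributes nothing to the Kummer class in $\mathcal{K}_p$ — i.e. that $\Q(\mu_{p^\infty})$ already contains all $p$-power roots of $-\xi_{p^n}^{-i}$, which is true since these are themselves roots of unity of $p$-power order (and $-1$ is a root of unity of order prime to $p$ as $p$ is odd, hence a $p$-power power inside $\mu_{p^\infty}$-free considerations). Making this bookkeeping precise at each finite level $n$, and checking it is compatible with the transition (norm) maps defining the measure, is the one genuinely technical point; everything else is formal from Propositions \ref{prop:1.2} and \ref{prop:1.3}. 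I would also cross-check (ii) against the classical statement that the $+$-part of the relevant Iwasawa module injects into something governed by Vandiver, to make sure the unconditional vanishing here is really a statement about $\mathcal{K}_p$ (the field generated only by cyclotomic numbers) and not about the full unramified-outside-$p$ extension.
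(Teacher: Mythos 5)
Your overall strategy matches the paper's: both parts are read off the explicit formula of Proposition~\ref{prop:1.2}, and part (ii) hinges on exactly the identity $1-\xi_{p^n}^{-i}=(-\xi_{p^n}^{-i})(1-\xi_{p^n}^{i})$ together with the observation that the Kummer character of a root of unity vanishes on $G_{\Q(\mu_{p^\infty})}$ when $p$ is odd. The paper carries this out by computing the $\pm$-projections $a^{\pm}$ of a measure in $\Z_p[[\Z_p]]$ coefficientwise and noting that the symmetry $a_i^{(n)}=a_{p^n-i}^{(n)}$ forces $a^{+}=0$; you phrase the same fact via Kummer duality and a Tate twist. Both are fine, though the paper's coefficient computation is shorter and avoids the sign bookkeeping you flag as the delicate point.

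There is, however, one genuine imprecision in your argument for (i). You write that the restricted cocycle's ``level-$n$ values record exactly the $\kappa(1-\xi_{p^n}^i)(\sigma)$ for $0<i<p^n$,'' but restriction to $\Z_p^\times$ discards precisely the coefficients at indices $i$ divisible by $p$ (and the $[0]$-coefficient $\kappa(1/p^n)$). So injectivity of $z_p$ is not literally inherited from Proposition~\ref{prop:1.3}; one has to check that nothing is lost. The missing step, which the paper states explicitly, is that $\kappa(1/p^n)$ and $\kappa(1-\xi_{p^n}^i)$ for $p\mid i$ are $\Z_p$-linear combinations of the $\kappa(1-\xi_{p^n}^k)$ with $(k,p)=1$ --- coming from the multiplicative relations $1-\xi_{p^{n-1}}^{k}=\prod_{j=0}^{p-1}\bigl(1-\xi_{p^n}^{\,k+jp^{n-1}}\bigr)$ and $p=\prod_{(k,p)=1}(1-\xi_p^k)$. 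Your parenthetical ``(the only part supported on $p\Z_p$ after one level)'' gestures at this, but the argument as written conflates the full cocycle with its restriction; you should state the linear relations and conclude from them that the restricted coordinates already determine all of the $\kappa(1-\xi_{p^n}^i)(\sigma)$.
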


\begin{proof} Observe that $\kappa(1- \xi_{p^n}^i)$, for $0 < i < p^n$ and $(i,p) >1$, as well as $\kappa(\frac{1}{p^n})$ 
can be expressed as sums of $\kappa(1-\xi_{p^n}^k)$ with $0 < k < p^n$ and $(k,p)=1$. 
Hence it follows from propositions \ref{prop:1.2} and \ref{prop:1.3} that $z_p$ is injective. 

Let $a = (\sum_{i=0}^{p^n-1} a_i^{(n)}[i])_{n \in \NN} \in \Z_p[[\Z_p]]$. Then $(-1) \in \Gamma$ acts on $a$ as follows
\[
(-1)a = \left( -a_0^{(n)}[0] - \sum_{i=1}^{p^n-1} a_i^{[n]}[p^n-i] \right)_{n \in \mathbb{N}}.
\]
Hence it follows that 
\begin{equation} \label{eq:2.1}
a^+  = \frac{1}{2} \left( \sum_{i=1}^{p^n-1} (a_i^{(n)} - a_{p^n-i}^{(n)})[i] \right)_{n \in \NN}
\end{equation}
and 
\begin{equation} \label{eq:2.2}
a^- = \frac{1}{2} \left(2 a_0^{(n)}[0] + \sum_{i=1}^{p^n-1}(a_i^{(n)} + a_{p^n-i}^{(n)})[i] \right)_{n \in \NN}.
\end{equation}
For $\sigma \in Gal(\mathcal{K}_p/\Q(\mu_{p^{\infty}}))$, we have 
\[
\mathfrak{A}(\widearrow{10})(\sigma) = \left( \kappa\left(\frac{1}{p^n}\right)(\sigma)[0]+\sum_{i=1}^{p^n-1} \kappa(1-\xi_{p^n}^{-i})(\sigma)[i]) \right)_{n \in \NN}
\]
by proposition \ref{prop:1.2}. The identity $1 - \xi_{p^n}^{-i} = (-\xi_{p^n}^{-i})(1-\xi_{p^n}^i)$ implies that 
$\kappa(1- \xi_{p^n}^{-i}) = \kappa(1-\xi_{p^n}^i)$ on $Gal(\mathcal{K}_p/\Q(\mu_{p^{\infty}}))$. 
Hence it follows that for $\sigma \in Gal(\mathcal{K}_p/\Q(\mu_{p^{\infty}}))$, we have $\mathfrak{A}(\widearrow{10})(\sigma)^+ = 0$ and
\[
\mathfrak{A}(\widearrow{10})(\sigma)^- = \left(\kappa(\frac{1}{p^n})(\sigma)[0]+ \sum_{i=1}^{p^n-1}  \kappa(1- \xi_{p^n}^i)(\sigma)[i]\right)_{n \in \NN}.
\]
Therefore
\[
Gal(\mathcal{K}_p/\Q(\mu_{p^{\infty}}))^+ = 0
\]
and the map $z_p^-$ given by 
\[
z_p^-(\sigma) = \left(\sum_{i=1, (i,p)=1}^{p^n-1}  \kappa(1- \xi_{p^n}^i)(\sigma)[i]\right)_{n \in \NN}
\]
is injective.
\end{proof}

Let $\mathcal{E}_{n}$ be the group of $p$-units in $\mathbb{Q}(\mu_{p^n})$, i.e. $\mathcal{E}_{n} = \mathbb{Z}[\mu_{p^n}]\left[\frac{1}{p}\right]^{\times}$ and let $\mathcal{C}_n$ be the group of cyclotomic $p$-units of $\mathbb{Q}(\mu_{p^n})$, i.e. the subgroup of $\mathcal{E}_n$ generated by $\mu_{p^n}$ and elements $1 - \xi_{p^n}^k$ for $0 < k < p^n$. Let $S_n$ be a set of all integers between $0$ and $p^n/2$ that are coprime to $p$.

\begin{lem} \label{lem:3.2} The following conditions are equivalent:
\begin{itemize}
\item[(i)] the map
\[
 \alpha _n:Gal(\mathcal{K}_p/\mathbb{Q}(\mu_{p^n}))\rightarrow \oplus_{i\in S_n} \mathbb{Z}/p^n\mathbb{Z}
\]
given by 
\[
\sigma \mapsto (\kappa(1- \xi_{p^n}^i)(\sigma))_{i\in S_n},
\]
is surjective;
\item[(ii)] $p$ does not divide $|\mathcal{E}_n/\mathcal{C}_n|$.
\end{itemize}
\end{lem}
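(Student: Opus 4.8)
The plan is to interpret the map $\alpha_n$ as (essentially) the Kummer map attached to the group of cyclotomic $p$-units, and to identify its cokernel in terms of $\mathcal{E}_n/\mathcal{C}_n$ via Kummer theory combined with a class-field-theoretic description of $\Gal(\mathcal{K}_p/\Q(\mu_{p^n}))$. First I would recall that the Kummer character $\kappa(1-\xi_{p^n}^i)$, when restricted to $\Gal(\mathcal{K}_p/\Q(\mu_{p^n}))$, depends only on the image of $1-\xi_{p^n}^i$ in $\mathcal{E}_n/(\mathcal{E}_n)^{p^n}$ (more precisely in $\mathcal{C}_n \otimes \Z/p^n\Z$), and that by construction $\mathcal{K}_p$ over $\Q(\mu_{p^n})$ is exactly the compositum of the fields obtained by adjoining $p^n$-th roots of all the $1-\xi_{p^n}^i$ (together with the $p$-power part, which is absorbed as in Proposition~\ref{prop:3.1}). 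So the full collection of Kummer characters $\{\kappa(1-\xi_{p^n}^i)\}_{0<i<p^n}$ gives an injection of $\Gal(\mathcal{K}_p/\Q(\mu_{p^n}))$ (or its relevant quotient) into the dual of $\mathcal{C}_n\otimes\Z/p^n\Z$.

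Next I would cut down the index set from all $i$ with $0<i<p^n$ to $i\in S_n$. As already noted in the proof of Proposition~\ref{prop:3.1}, for $(i,p)>1$ the character $\kappa(1-\xi_{p^n}^i)$ is a $\Z$-linear combination of the $\kappa(1-\xi_{p^n}^k)$ with $(k,p)=1$, and the relation $1-\xi_{p^n}^{-i}=(-\xi_{p^n}^{-i})(1-\xi_{p^n}^i)$ shows $\kappa(1-\xi_{p^n}^{-i})=\kappa(1-\xi_{p^n}^i)$ on $\Gal(\mathcal{K}_p/\Q(\mu_{p^n}))$; hence the characters indexed by $S_n$ already generate all of them, and the number $|S_n| = (p^n - p^{n-1})/2$ matches the $\Z_p[[\Gamma]]$-rank expectation. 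Thus $\alpha_n$ is surjective onto $\oplus_{i\in S_n}\Z/p^n\Z$ if and only if these $|S_n|$ characters are $\Z/p^n\Z$-linearly independent, i.e. if and only if $\Gal(\mathcal{K}_p/\Q(\mu_{p^n}))$ — or rather the maximal quotient detected by these Kummer characters — is free of rank $|S_n|$ over $\Z/p^n\Z$, equivalently the images of $\{1-\xi_{p^n}^i\}_{i\in S_n}$ are $\Z/p^n\Z$-independent in $\mathcal{E}_n\otimes\Z/p^n\Z$.

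The final step is the numerical bookkeeping translating "$\{1-\xi_{p^n}^i\}_{i\in S_n}$ independent in $\mathcal{E}_n\otimes\Z/p^n\Z$" into "$p\nmid|\mathcal{E}_n/\mathcal{C}_n|$". Here one uses that $\mathcal{C}_n$ is, modulo roots of unity, the free abelian group on $\{1-\xi_{p^n}^i : i\in S_n\}$ (this is the standard fact that the cyclotomic $p$-units have exactly this rank and no further multiplicative relations beyond the Galois-conjugation one encoded by $S_n$), so $\mathcal{C}_n\otimes\Z/p^n\Z$ is free of rank $|S_n|$ over $\Z/p^n\Z$; then the natural map $\mathcal{C}_n\otimes\Z/p^n\Z \to \mathcal{E}_n\otimes\Z/p^n\Z$ is injective precisely when $p$ does not divide $[\mathcal{E}_n:\mathcal{C}_n]$, since $\mathcal{E}_n$ and $\mathcal{C}_n$ have the same $\Z$-rank and $\Tor_1^{\Z}(\mathcal{E}_n/\mathcal{C}_n,\Z/p^n\Z)$ controls the kernel. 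Chaining these identifications gives the equivalence (i)$\Leftrightarrow$(ii). The main obstacle I expect is the careful verification that $\Gal(\mathcal{K}_p/\Q(\mu_{p^n}))$ is \emph{exactly} dual to $\mathcal{C}_n\otimes\Z/p^n\Z$ with no extra contribution — in particular that nothing is lost or gained by the $p$-power-index terms $1-\xi_{p^n}^i$ with $(i,p)>1$ and by the element $1/p^n$, and that the "same rank" claim for $\mathcal{E}_n$ versus $\mathcal{C}_n$ and the freeness of $\mathcal{C}_n$ modulo torsion are invoked correctly; once the duality is set up cleanly, the rest is the linear-algebra statement that a map of finite free $\Z/p^n\Z$-modules of equal rank is an isomorphism iff it is injective iff its "determinant" (the index $[\mathcal{E}_n:\mathcal{C}_n]$) is a $p$-adic unit.
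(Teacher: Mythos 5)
Your overall strategy matches the paper's: translate (i) via Kummer theory into an independence statement inside $\mathcal{E}_n/\mathcal{E}_n^{p^n}$, then use the fact that the cyclotomic $p$-units have no unexpected multiplicative relations (Washington, Theorem 8.9) together with the index $[\mathcal{E}_n:\mathcal{C}_n]$. However, there is a genuine gap in the middle of your chain, precisely at the point you flag as "the main obstacle," and a small computational slip that feeds into it.

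The slip: $\mathcal{C}_n$ is not free modulo a prime-to-$p$ torsion group. Its torsion is $\mu_{p^n}\times\{\pm1\}$, so $\mathcal{C}_n\otimes\Z/p^n\Z$ is free of rank $|S_n|+1$, not $|S_n|$, with the extra $\Z/p^n\Z$-summand coming from $\mu_{p^n}$. This matters, because "$\alpha_n$ surjective" is equivalent (by Kummer theory) only to the statement that the images of the $|S_n|$ elements $1-\xi_{p^n}^i$ generate a free $\Z/p^n\Z$-submodule of rank $|S_n|$ inside $\mathcal{E}_n\otimes\Z/p^n\Z$, i.e.\ to injectivity of the restriction of your map to the free summand $F\otimes\Z/p^n\Z$ (where $F=\langle 1-\xi_{p^n}^i : i\in S_n\rangle$). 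Your $\Tor$-computation, on the other hand, controls injectivity of the \emph{full} map $\mathcal{C}_n\otimes\Z/p^n\Z\to\mathcal{E}_n\otimes\Z/p^n\Z$ (and it does give exactly $\ker = \Tor_1^{\Z}(\mathcal{E}_n/\mathcal{C}_n,\Z/p^n\Z)$, but only after one also notes that $\mathcal{C}_n$ and $\mathcal{E}_n$ have the \emph{same} torsion subgroup, so that $\Tor_1(\mathcal{C}_n,\Z/p^n\Z)\to\Tor_1(\mathcal{E}_n,\Z/p^n\Z)$ is an isomorphism; you do not state this). These two injectivity conditions are not formally equivalent: a priori the kernel of the full map could project nontrivially into both the $\mu_{p^n}$-summand and the $F$-summand, so that the restriction to $F$ is injective while the full map is not. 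Closing this gap requires an extra arithmetic input, namely that $\xi_{p^n}$ cannot become a $p$-th power in $\mathcal{E}_n$ (because $\xi_{p^{n+1}}\notin\Q(\mu_{p^n})$); this is what forces any relation in $\mathcal{E}_n/\mathcal{E}_n^p$ among $\xi_{p^n}$ and the $1-\xi_{p^n}^i$ to actually involve some $1-\xi_{p^n}^i$ nontrivially, which is the point at which (i) is seen to fail.

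The paper's proof avoids this subtlety by explicitly adjoining $\xi_{p^n}$ to the generating set, showing that $\alpha_n$ is surjective iff $\langle\xi_{p^n},1-\xi_{p^n}^i\rangle$ has the full rank $|S_n|+1$, then reducing mod $p$ and arguing concretely with the unique representation $\pm\xi_{p^n}^{a_0}\prod(1-\xi_{p^n}^i)^{a_i}$ of elements of $\mathcal{C}_n$. This is conceptually the same manoeuvre as your $\Tor$-sequence --- it is asking whether the inclusion $\mathcal{C}_n\hookrightarrow\mathcal{E}_n$ stays injective after $\otimes\Z/p\Z$ --- but by carrying the root of unity along as an explicit generator, the paper never has to separate the $\mu_{p^n}$-contribution from the $F$-contribution. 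Your homological rephrasing is cleaner to state, but you would need to add the "$ \xi_{p^n}$ is not a $p$-th power" step (or include $\xi_{p^n}$ among the generators as the paper does) to make it watertight.
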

\begin{proof} The map
\[
 \mathcal{E}_n/\mathcal{E}_n^{p^n}\rightarrow \mathbb{Q}(\mu_{p^n})^\times/{\mathbb{Q}(\mu_{p^n})^\times }^{p^n}
\]
induced by the inclusion $\mathcal{E}_n \subset \mathbb{Q}(\mu_{p^n})^\times$ is injective. Therefore by the Kummer theory the map $\alpha _n$ is surjective if and only if the subgroup of $\mathcal{E}_n/\mathcal{E}_n^{p^n}$ generated by elements $1-\xi_{p^n}^i$, $i\in S_n$ is isomorphic to $(\Z/p^n\Z)^{|S_n|}$. It follows from  \cite[theorem 8.9]{Wa} that there is no multiplicative relation between $\xi_{p^n}$ and the elements $1-\xi_{p^n}^i$  for $i\in S_n$. Hence $\alpha_n$ is surjective if and only if the subgroup of $\mathcal{E}_n/\mathcal{E}_n^{p^n}$ generated by elements $1-\xi_{p^n}^i$, $i\in S_n$ and by $\xi_{p^n}$ is isomorphic to $(\Z/p^n\Z)^{|S_n|+1}$. The last condition holds if and only if the subgroup of $\mathcal{E}_n/\mathcal{E}_n^{p}$ generated by elements $1-\xi_{p^n}^i$, $i\in S_n$ and by $\xi_{p^n}$ is isomorphic to $(\Z/p\Z)^{|S_n|+1}$.

If $p$ divides $|\mathcal{E}_n/\mathcal{C}_n|$ then there is $u\in \mathcal{E}_n$ such that $u^p\in \mathcal{C}_n$ but $u\notin \mathcal{C}_n$.
Any element of $ \mathcal{C}_n$ can be written in a unique way in the form 
$\pm \xi _{p^n}^{a_0}\prod_{i\in S_n}(1-\xi _{p^n}^i)^{a_i}$ (see \cite[theorem 8.9]{Wa}).
Hence
\[
 u^p=\pm \xi _{p^n}^{a_0}\prod_{i\in S_n}(1-\xi _{p^n}^i)^{a_i}\,,
\]
where at least one of $a_i\nequiv 0$ modulo $p$. But then the subgroup of $\mathcal{E}_n/\mathcal{E}_n^{p}$ generated by $\xi _{p^n}$ and   
$1-\xi_{p^n}^i$ for $i\in S_n$ will have the rank smaller than $|S_n|+1$. Therefore i) implies ii).

If $p$ does not divide $|\mathcal{E}_n/\mathcal{C}_n|$ then the map $\mathcal{C}_n/\mathcal{C}_n^p\rightarrow \mathcal{E}_n/\mathcal{E}_n^{p}$
is injective. The elements $\xi _{p^n}$ and $1-\xi_{p^n}^i$ for $i\in S_n$ taken modulo $\mathcal{C}_n^p$ form a basis of 
$\mathcal{C}_n/\mathcal{C}_n^p$. But it implies, going backwards all equivalent statements, that i) holds. 
\end{proof}

\begin{lem} \label{lem:3.3} The following conditions are equivalent:
\begin{itemize}
\item[(i)] the map 
\[
z_p^{-}: Gal(\mathcal{K}_p/\mathbb{Q}(\mu_{p^{\infty}})) = Gal(\mathcal{K}_p/\mathbb{Q}(\mu_{p^{\infty}}))^- \rightarrow \mathbb{Z}_p[[\mathbb{Z}_p^{\times}]]^-
\]
is an isomorphism of $\mathbb{Z}_p[[\Gamma]]$-modules;
\item[(ii)] $p$ does not divide $|\mathcal{E}_n/\mathcal{C}_n|$ for all $n \geq 1$.
\end{itemize}
\end{lem}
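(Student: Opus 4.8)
The plan is to deduce Lemma~\ref{lem:3.3} from Lemma~\ref{lem:3.2} by passing to the limit over $n$. First I would recall that Proposition~\ref{prop:3.1} already identifies $Gal(\mathcal{K}_p/\Q(\mu_{p^\infty}))$ with its $-$part and realizes $z_p^-$ explicitly via the family of maps $\sigma \mapsto (\sum_{i\in S_n}\kappa(1-\xi_{p^n}^i)(\sigma)[i])_n$, where on each finite level the relevant data is precisely the map $\alpha_n$ of Lemma~\ref{lem:3.2} (the contributions from $i$ with $(i,p)>1$ and from $[0]$ being redundant, as shown there). So the content is that $z_p^-$ is an isomorphism of $\Z_p[[\Gamma]]$-modules if and only if each $\alpha_n$ is surjective.

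The key steps, in order, are: (1) Describe the target $\Z_p[[\Z_p^\times]]^-$ as an inverse limit over $n$ of the finite modules $(\Z_p/p^n\Z_p[\Z/p^n\Z^\times])^-$, and observe that choosing representatives $i\in S_n$ for the $\pm$-orbits identifies this $-$part with $\oplus_{i\in S_n}\Z/p^n\Z$ compatibly with the transition maps; this is the same indexing set appearing in Lemma~\ref{lem:3.2}. (2) Note that $z_p^-$ is always injective by Proposition~\ref{prop:3.1}(i), so isomorphy is equivalent to surjectivity. (3) For surjectivity, observe that $z_p^-$ is the inverse limit of the maps $\bar\alpha_n : Gal(\mathcal{K}_p/\Q(\mu_{p^n})) \to \oplus_{i\in S_n}\Z/p^n\Z$ obtained from $\alpha_n$ after quotienting the source by the normal subgroup fixing $\Q(\mu_{p^n})$ inside the relevant Kummer extension; since $z_p^-$ factors through these, surjectivity of $z_p^-$ forces surjectivity of each $\bar\alpha_n$, hence of $\alpha_n$, giving (i)$\Rightarrow$(ii) via Lemma~\ref{lem:3.2}. (4) Conversely, if every $\alpha_n$ is surjective, then $z_p^-$ has dense image; since $Gal(\mathcal{K}_p/\Q(\mu_{p^\infty}))$ is a compact (profinite, indeed finitely generated $\Z_p$-) module and $z_p^-$ is continuous, the image is closed, hence $z_p^-$ is surjective, and being also injective it is an isomorphism of topological $\Z_p$-modules; it is a $\Z_p[[\Gamma]]$-map by Proposition~\ref{prop:1.3}. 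This gives (ii)$\Rightarrow$(i).

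The main obstacle I expect is step (3): making precise that surjectivity of the profinite map $z_p^-$ really does descend to surjectivity of each finite-level $\alpha_n$, rather than only to surjectivity of some quotient that could a priori be strictly smaller than $\oplus_{i\in S_n}\Z/p^n\Z$. The point to check carefully is that the explicit formula for $z_p^-$ in Proposition~\ref{prop:3.1}, read modulo $p^n$, is \emph{exactly} the map $\alpha_n$ precomposed with restriction $Gal(\mathcal{K}_p/\Q(\mu_{p^\infty})) \twoheadrightarrow Gal(\mathcal{K}_p/\Q(\mu_{p^n}))$ — here one uses that $\kappa(1-\xi_{p^n}^i)(\sigma) \bmod p^n$ depends only on $\sigma|_{\mathcal{K}_p}$ restricted appropriately and is insensitive to the choice of higher roots — together with the fact that the transition maps in the inverse system $\{\oplus_{i\in S_n}\Z/p^n\Z\}_n$ are surjective, so that a compatible family of surjections assembles to a surjection onto the inverse limit. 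Once the indexing and the compatibility of transition maps are pinned down, both implications follow formally; the substantive input is entirely contained in Lemma~\ref{lem:3.2}.
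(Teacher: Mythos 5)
Your proposal has a genuine gap in the direction (ii) $\Rightarrow$ (i), and it stems from a wrong arrow. You write, twice, that there is a restriction map
\[
Gal(\mathcal{K}_p/\Q(\mu_{p^\infty})) \twoheadrightarrow Gal(\mathcal{K}_p/\Q(\mu_{p^n})),
\]
but since $\Q(\mu_{p^\infty}) \supset \Q(\mu_{p^n})$ the map goes the other way: $Gal(\mathcal{K}_p/\Q(\mu_{p^\infty}))$ is an open \emph{subgroup} of $Gal(\mathcal{K}_p/\Q(\mu_{p^n}))$, and the inclusion is not onto (the quotient is $Gal(\Q(\mu_{p^\infty})/\Q(\mu_{p^n})) \cong \Z_p$). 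As a result, the pivotal assertion in your step (4) --- that surjectivity of each $\alpha_n$ gives $z_p^-$ dense image --- does not follow formally. The $n$\textsuperscript{th} truncation of $z_p^-$ is $\alpha_n$ \emph{restricted to the subgroup} $Gal(\mathcal{K}_p/\Q(\mu_{p^\infty}))$, and a priori $\alpha_n\bigl(Gal(\mathcal{K}_p/\Q(\mu_{p^\infty}))\bigr)$ could be a proper subgroup of $\alpha_n\bigl(Gal(\mathcal{K}_p/\Q(\mu_{p^n}))\bigr) = \oplus_{i\in S_n}\Z/p^n\Z$. One has to rule out that the Kummer extension of $\Q(\mu_{p^n})$ underlying $\alpha_n$ meets the cyclotomic tower $\Q(\mu_{p^\infty})$ nontrivially. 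This is not automatic, and it is exactly where the paper does extra work: the authors introduce, for every $m\geq n$, the map $\alpha_n^{(m)}\colon Gal(\mathcal{K}_p/\Q(\mu_{p^m})) \rightarrow \oplus_{i\in S_n}\Z/p^m\Z$ and invoke Bass's theorem (via Washington, Thm.\ 8.9 and p.\ 150) to get surjectivity of \emph{all} of these, from which they deduce that $\alpha_n^\infty\colon Gal(\mathcal{K}_p/\Q(\mu_{p^\infty}))\to\oplus_{i\in S_n}\Z_p$ is surjective. Your proposal omits Bass's theorem and the family $\{\alpha_n^{(m)}\}_{m\geq n}$ entirely, so it never confronts the subgroup-versus-full-group issue.

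Two smaller points. First, you flag step (3), the implication (i) $\Rightarrow$ (ii), as the expected main obstacle, but that is in fact the easy direction: if the image of the smaller group $Gal(\mathcal{K}_p/\Q(\mu_{p^\infty}))$ under $\alpha_n$ is everything, then a fortiori so is the image of the larger group $Gal(\mathcal{K}_p/\Q(\mu_{p^n}))$, and Lemma~\ref{lem:3.2} applies; the delicate step is (ii) $\Rightarrow$ (i). Second, the phrase ``quotienting the source by the normal subgroup fixing $\Q(\mu_{p^n})$'' in step (3) does not parse, since every element of the source $Gal(\mathcal{K}_p/\Q(\mu_{p^n}))$ already fixes $\Q(\mu_{p^n})$; this is symptomatic of the same confusion about the direction of the inclusion.
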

\begin{proof} If  $p$ does not divide $|\mathcal{E}_n/\mathcal{C}_n|$ then the map
\[
 \alpha _n:Gal(\mathcal{K}_p/\mathbb{Q}(\mu_{p^n}))\rightarrow \oplus_{i\in S_n} \mathbb{Z}/p^n\mathbb{Z}
\]
given by 
\[
\sigma \mapsto (\kappa(1- \xi_{p^n}^i)(\sigma))_{i\in S_n},
\] 
is surjective by lemma 3.2. Let $m\geq n$. Let us assume that $p$ does not divide $|\mathcal{E}_m/\mathcal{C}_m|$. 
Then it follows from the Bass theorem (see \cite[page 150 and Theorem 8.9.]{Wa}) that the map
\[
 \alpha _n^{(m)}:Gal(\mathcal{K}_p/\mathbb{Q}(\mu_{p^m}))\rightarrow \oplus_{i\in S_n} \mathbb{Z}/p^m\mathbb{Z}
\]
given by 
\[
\sigma \mapsto (\kappa(1- \xi_{p^n}^i)(\sigma))_{i\in S_n},
\] 
is surjective. Hence the assumption ii) implies that for all $n\in \NN$ the maps
\[
 \alpha _n^\infty:Gal(\mathcal{K}_p/\mathbb{Q}(\mu_{p^\infty}))\rightarrow \oplus_{i\in S_n} \mathbb{Z}_p
\]
given by 
\[
\sigma \mapsto (\kappa(1- \xi_{p^n}^i)(\sigma))_{i\in S_n},
\] 
are surjective. Therefore the map
\[
z_p^{-}: Gal(\mathcal{K}_p/\mathbb{Q}(\mu_{p^{\infty}}))  \rightarrow \mathbb{Z}_p[[\mathbb{Z}_p^{\times}]]^-
\] 
is surjective. Hence it follows from proposition 3.1 that the map $z_p^{-}$ is an isomorphism.

It follows from lemma 3.2 that i) implies ii).
\end{proof}

\section{The multiplicative structure of $\Z_p[[\Z_p^{\times}]]$} 

We denote by $\Z_p(k)$ the ring $\Z_p$ equipped with the action of $\Gamma$ given by 
\[
x(a):= x^ka.
\]
The infinite product $\prod_{k=1}^{\infty} \Z_p(k)$ is equipped with a multiplicative structure given by 
\[
(a_k)_{k=1}^{\infty} \cdot (b_k)_{k=1}^{\infty} = (a_kb_k)_{k=1}^{\infty}.
\]
The $\Z_p[[\Gamma]]$-module $\Z_p[[\Z_p^{\times}]]$ has a multiplication induced by a group multiplication in $\Z_p^{\times}$. Let us define a map 
\[
\Phi: \Z_p[[\Z_p^{\times}]] \rightarrow \prod_{k=1}^{\infty} \Z_p(k)
\]
by the formula
\[
\Phi(\mu) = \left(\int_{\Z_p^{\times}} x^{k-1}d\mu(x)\right)_{k}.
\]

\begin{lemma} \label{lem:3.1} The map $\Phi$ is a continuous injective morphism of $\Z_p[[\Gamma]]$-modules. Moreover, $\Phi(\mu \cdot \nu) = \Phi(\mu)\Phi(\nu)$ for any $\mu, \nu \in \Z_p[[\Z_p^{\times}]]$. 
\end{lemma}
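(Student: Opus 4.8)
The plan is to check the clauses of the statement one at a time, treating $\Z_p$-linearity, continuity and $\Gamma$-equivariance as routine and concentrating the real work on injectivity. First, $\Z_p$-linearity is immediate: for each $k$ the map $\mu\mapsto\int_{\Z_p^{\times}}x^{k-1}\,d\mu(x)$ is $\Z_p$-linear, and the integral lands in $\Z_p$ because $\mu$ is a $\Z_p$-valued measure on a compact space and $x\mapsto x^{k-1}$ is a continuous $\Z_p$-valued function. For continuity, by the definition of the product topology on $\prod_{k}\Z_p(k)$ it suffices to see that each coordinate $\mu\mapsto\int_{\Z_p^{\times}}x^{k-1}\,d\mu$ is continuous; this is the standard fact about the Iwasawa algebra: approximate $x^{k-1}$ uniformly by locally constant functions, use that every $\mu\in\Z_p[[\Z_p^{\times}]]$ satisfies $|\int g\,d\mu|\le\|g\|_{\infty}$, and observe that integration against a locally constant function factors through a finite quotient $\Z_p[(\Z/p^n\Z)^{\times}]$.

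For $\Gamma$-equivariance I would first record, by checking on Dirac measures, that the action coming from (\ref{eq:1.3}) (inherited on $\Z_p[[\Z_p^{\times}]]$ through the $\Z_p[[\Gamma]]$-morphism $\mathcal{R}$) satisfies $\int_{\Z_p^{\times}}f\,d(c\cdot\mu)=c\int_{\Z_p^{\times}}f(cx)\,d\mu(x)$ for $c\in\Gamma=\Z_p^{\times}$; taking $f(x)=x^{k-1}$ gives $\int x^{k-1}\,d(c\cdot\mu)=c^{k}\int x^{k-1}\,d\mu$, which is exactly how $c$ acts on the factor $\Z_p(k)$ (the $\Gamma$-action on $\prod_k\Z_p(k)$ being coordinatewise, hence continuous, so the target is indeed a $\Z_p[[\Gamma]]$-module). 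Thus $\Phi(c\cdot\mu)=c\cdot\Phi(\mu)$. For multiplicativity, the product on $\Z_p[[\Z_p^{\times}]]$ is convolution, so by Fubini $\int f\,d(\mu\cdot\nu)=\int\int f(xy)\,d\mu(x)\,d\nu(y)$; with $f(x)=x^{k-1}$ this factors as $\big(\int x^{k-1}\,d\mu\big)\big(\int y^{k-1}\,d\nu\big)$, i.e. $\Phi(\mu\cdot\nu)_{k}=\Phi(\mu)_{k}\Phi(\nu)_{k}$, the product in $\prod_{k}\Z_p(k)$.

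The substantive point is injectivity, which I would reduce to $\Z_p[[\Z_p]]$. Via the section $\Z_p[[\Z_p^{\times}]]\hookrightarrow\Z_p[[\Z_p]]$ of $\mathcal{R}$ (extension of a measure by zero outside the open--closed subset $\Z_p^{\times}$), one has $\int_{\Z_p}x^{j}\,d\mu=\int_{\Z_p^{\times}}x^{j}\,d\mu=\Phi(\mu)_{j+1}$ for all $j\ge 0$, so if $\Phi(\mu)=0$ then every moment $\int_{\Z_p}x^{j}\,d\mu$ vanishes, and it suffices to show that a measure on $\Z_p$ with all moments zero is zero. Using the Amice--Mahler identification $\Z_p[[\Z_p]]\cong\Z_p[[T]]$, $\mu\mapsto g_{\mu}(T)=\sum_{n}c_{n}T^{n}$ with $c_{n}=\int_{\Z_p}\binom{x}{n}\,d\mu$, together with the Stirling identity $x^{j}=\sum_{n=0}^{j}S(j,n)\,n!\binom{x}{n}$ (Stirling numbers of the second kind, $S(j,j)=1$), one gets $\int_{\Z_p}x^{j}\,d\mu=\sum_{n=0}^{j}S(j,n)\,n!\,c_{n}$, a triangular linear system with diagonal entries $j!$. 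Hence vanishing of all moments forces, inductively on $j$, $j!\,c_{j}=0$ in $\Z_p$, so $c_{j}=0$ because $\Z_p$ is torsion-free; thus $g_{\mu}=0$ and $\mu=0$.

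I expect this final step to be the only real obstacle, and the subtlety to flag is that the monomials $x\mapsto x^{k-1}$ do \emph{not} $\Z_p$-span a dense submodule of $C(\Z_p^{\times},\Z_p)$ — their $\Z_p$-span is the polynomials, whereas the Mahler basis $\binom{x}{n}$ carries denominators $n!$ — so injectivity cannot be obtained from a naive density argument. What saves the day is that the change of basis between $\{x^{j}\}$ and $\{\binom{x}{n}\}$ is unipotent over $\Q_p$, so the moments still determine the ($\Z_p$-valued) Mahler coefficients. Everything else is bookkeeping with Dirac measures and Fubini.
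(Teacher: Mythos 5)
Your proof is correct and, as far as it goes, follows the same outline as the paper for three of the four assertions: the paper dismisses continuity as ``evident,'' verifies $\Gamma$-equivariance by the same computation on elements $g\in\Gamma$ acting on finite formal sums $\sum_x a_x[x]$ via $g\mu=\sum_x g a_x[gx]$, and verifies multiplicativity by the same finite-sum calculation you phrase via Fubini; in each case the extension to $\Z_p[[\Z_p^\times]]$ is by continuity/density of $\Z_p[\Z_p^\times]$. So on those points the two arguments are essentially identical modulo notation.

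The interesting divergence is that the paper's proof never addresses injectivity at all: it proves continuity, $\Z_p[[\Gamma]]$-linearity and multiplicativity and then stops, even though the statement of the lemma claims injectivity. You not only notice that this is the only non-routine clause but also supply a complete argument: extend by zero along the clopen inclusion $\Z_p^\times\subset\Z_p$, pass through the Mahler/Amice identification $\Z_p[[\Z_p]]\cong\Z_p[[T]]$, write $x^j=\sum_{n\le j}S(j,n)\,n!\binom{x}{n}$, and then run the unitriangular induction $j!\,c_j=0\Rightarrow c_j=0$ using that $\Z_p$ is torsion-free. This is the standard ``a measure is determined by its moments'' argument, and your caveat about why a naive density argument over $\Z_p$ fails (the $\Z_p$-span of monomials is not dense; the unipotence is only over $\Q_p$) is exactly the right thing to flag. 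In short: your write-up is correct and strictly more complete than the paper's, which implicitly treats injectivity as known (it is a classical fact, and in this paper it can also be extracted from the identification of $\Z_p[[\Z_p^\times]]$ with power series plus the description of the $\Phi$-coordinates as values of the associated Iwasawa power series, but the paper does not say any of this).
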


\begin{proof} It is evident that $\Phi$ is continuous. Let us first show that the map $\Phi$ is a map of $\Z_p[[\Gamma]]$. 
By continuity of the it is enough to check that $\Phi(g\mu) = g \Phi(\mu)$ for each 
$g \in \Gamma$ and for $\mu = \sum_{x \in \Z_p^{\times}} a_x[x] \in \Z_p[\Z_p^{\times}]$. By our convention (see formula (\ref{eq:1.3})) 
\[
g \mu = \sum_{x}  ga_x [gx].
\]
Hence 
\[
\Phi(g\mu) = \left(\sum_{x} g^kx^{k-1} a_x\right)_{k} = g \cdot \left(\sum_{x} x^{k-1}a_x\right)_k = g \Phi(\mu).
\]
Next we show that $\Phi$ is multiplicative i.e. $\Phi(\mu\nu) = \Phi(\mu) \Phi(\nu)$. Again by continuity we can need to check this for $\mu = \sum_x a_x[x]$ and $\nu = \sum_x b_x[x]$ in $\Z_p[\Z_p^{\times}]$. We have
\begin{align*}
\Phi(\mu) \Phi(\nu) &= \left(\sum_x a_x x^{k-1}\right)_k\left(\sum_{y} b_y y^{k-1}\right)_k \\
&= \left(\sum_{z} \left(\sum_{x,y: xy = z} a_xb_y\right) z^{k-1}\right)_k \\
&= \Phi(\mu\nu).
\end{align*}

\end{proof}

Observe that 
\[
(\prod_{k=1}^{\infty} \Z_p(k))^+ = \prod_{k=1}^{\infty} \Z_p(2k)
\]
and 
\[
(\prod_{k=1}^{\infty} \Z_p(k))^- = \prod_{k=1}^{\infty} \Z_p(2k-1).
\]
It follows from proposition \ref{prop:3.1} and lemma \ref{lem:3.1} that the composition 
\[
Gal(\mathcal{K}_p/\Q(\mu_{p^{\infty}}))^- \xrightarrow{z_p^-} \Z_p[[\Z_p^{\times}]]^- \xrightarrow{\Phi^{-}} \prod_{k=1}^{\infty} \Z_p(2k-1)
\]
is an injective  morphism of $\Z_p[[\Gamma]]$-modules. 

\section{The Coates-Wiles homomorphism}

We recall briefly some results of Coleman presented in \cite{CS}. Let $\mathcal{U}_n := \Z_p[\mu_{p^{n+1}}]^{\times}$ and put $\mathcal{U}_{\infty}:= \varprojlim_n \mathcal{U}_n$, with the projective limit taken with respect to the norm maps. Let $\mathcal{N}: \Z_p[[T]] \rightarrow \Z_p[[T]]$ be the norm map defined in \cite{CS}. Put $W:= \{f \in \Z_p[[T]]^{\times} : \mathcal{N}(f) = f\}$. For each integer $n \geq 0$, we fix a $p^{n+1}$ root $\xi_{p^{n+1}}$ of 1 such that $\xi_{p^{n+1}}^p = \xi_{p^n}$. 

\begin{theorem}[See \cite{CS}, theorem 2.1.2 and corollary 2.3.7] \label{thm:4.1} Let $\epsilon = (\epsilon_n) \in \mathcal{U}_{\infty}$, then there exists a unique $f_{\epsilon}(T) \in \Z_p[[T]]^{\times}$ such that $f(\xi_{p^{n+1}}-1) = \epsilon_n$ for all $n \geq 0$. Moreover, the map $\epsilon \mapsto f_{\epsilon}(T)$ defines an isomorphism of groups from $\mathcal{U}_{\infty}$ to $W$. 
\end{theorem}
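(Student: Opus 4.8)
The plan is to reproduce Coleman's argument, organised around the norm operator $\mathcal{N}$ already introduced.

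\emph{The norm operator and the group $W$.} The substitution $\varphi\colon \Z_p[[T]] \to \Z_p[[T]]$, $\varphi(g)(T) = g((1+T)^p - 1)$, exhibits $\Z_p[[T]]$ as a free module of rank $p$ over $\varphi(\Z_p[[T]])$, with basis $1, 1+T, \dots, (1+T)^{p-1}$. For $g \in \Z_p[[T]]$ the determinant of the $\varphi(\Z_p[[T]])$-linear endomorphism ``multiplication by $g$'' lies in $\varphi(\Z_p[[T]])$; since $\varphi$ is injective this defines $\mathcal{N}g \in \Z_p[[T]]$ by the identity
\[
(\mathcal{N}g)\big((1+T)^p - 1\big) = \prod_{\zeta^p = 1} g\big(\zeta(1+T) - 1\big),
\]
the product over the $p$-th roots of unity. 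One checks $\mathcal{N}$ is multiplicative and preserves $\Z_p[[T]]^\times$, so $W = \{f \in \Z_p[[T]]^\times : \mathcal{N}f = f\}$ is a subgroup; these facts are in \cite{CS}.

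\emph{Uniqueness.} The points $\xi_{p^{n+1}} - 1$, $n \ge 0$, are pairwise distinct elements of the maximal ideal of $\Z_p[\mu_{p^\infty}]$ with $v_p(\xi_{p^{n+1}} - 1) = 1/(p^n(p-1)) \to 0$, hence they accumulate at $0$ inside the open unit disc of $\overline{\Q}_p$. If $f_1, f_2 \in \Z_p[[T]]$ both interpolate $(\epsilon_n)$, then $h = f_1 - f_2$ vanishes at all these points; writing $h = p^m u$ with $u \notin p\Z_p[[T]]$ and applying Weierstrass preparation (Strassmann's theorem) to $u$ shows $u$ has only finitely many zeros in the open unit disc, forcing $h = 0$. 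So $f_\epsilon$, if it exists, is unique; uniqueness then makes $\epsilon \mapsto f_\epsilon$ a group homomorphism, visibly injective since $f_\epsilon = 1$ implies every $\epsilon_n = 1$.

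\emph{Existence and $f_\epsilon \in W$.} Since evaluation at $\xi_p - 1$ gives a surjection $\Z_p[[T]] \twoheadrightarrow \Z_p[\mu_p]$, one first constructs by successive approximation along the $\xi_{p^{n+1}} - 1$ a unit $g \in \Z_p[[T]]^\times$ with $g(\xi_{p^{n+1}} - 1) \equiv \epsilon_n \pmod p$ for all $n$, and sets $f_\epsilon = \lim_{k} \mathcal{N}^k g$. The crucial input is that $\mathcal{N}$ is contracting for the $\mathfrak{m}$-adic topology on $\Z_p[[T]]^\times$, $\mathfrak{m} = (p,T)$ — a congruence modulo $\mathfrak{m}^j$ is improved by one application — so $(\mathcal{N}^k g)_k$ is Cauchy, its limit $f_\epsilon$ satisfies $\mathcal{N}f_\epsilon = f_\epsilon$, i.e. $f_\epsilon \in W$. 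Specialising $T \mapsto \xi_{p^{n+2}} - 1$ in the defining identity of $\mathcal{N}$ and using the norm-compatibility $\Nm_{\Q_p(\mu_{p^{n+2}})/\Q_p(\mu_{p^{n+1}})}(\epsilon_{n+1}) = \epsilon_n$, one checks inductively that $(\mathcal{N}^k g)(\xi_{p^{n+1}} - 1) \to \epsilon_n$, whence $f_\epsilon(\xi_{p^{n+1}} - 1) = \epsilon_n$ for all $n$.

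\emph{Surjectivity onto $W$ and conclusion.} Conversely, given $f \in W$ put $\epsilon_n = f(\xi_{p^{n+1}} - 1)$; this lies in $\Z_p[\mu_{p^{n+1}}]^\times$ because $f(0) \in \Z_p^\times$ and $\xi_{p^{n+1}} - 1$ is topologically nilpotent, and specialising $(\mathcal{N}f)((1+T)^p - 1) = \prod_{\zeta^p=1} f(\zeta(1+T) - 1)$ at $T \mapsto \xi_{p^{n+2}} - 1$, with $\mathcal{N}f = f$, gives $\Nm_{\Q_p(\mu_{p^{n+2}})/\Q_p(\mu_{p^{n+1}})}(\epsilon_{n+1}) = \epsilon_n$; hence $\epsilon = (\epsilon_n) \in \mathcal{U}_\infty$ and $f_\epsilon = f$ by uniqueness. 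Together with injectivity and multiplicativity this yields the isomorphism $\mathcal{U}_\infty \xrightarrow{\sim} W$. The main obstacle is the existence step — producing the initial mod-$p$ approximation and, above all, establishing the contraction property of $\mathcal{N}$ that forces $\mathcal{N}^k g$ both to converge and to acquire the prescribed specialisations; everything else is formal once the norm operator is available, and for the detailed estimates we refer to \cite{CS}.
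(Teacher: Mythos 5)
The paper itself offers no proof of Theorem~\ref{thm:4.1}; it is stated purely as a citation to Coates--Sujatha, and the ambient argument uses it as a black box. So there is no ``paper's proof'' to compare against, and the relevant question is whether your reconstruction of the Coleman argument is sound. In outline it is: norm operator $\mathcal{N}$ via the rank-$p$ freeness of $\Z_p[[T]]$ over $\varphi(\Z_p[[T]])$, uniqueness by Weierstrass preparation applied to a difference vanishing at the accumulating points $\xi_{p^{n+1}}-1$, existence by iterating $\mathcal{N}$, and surjectivity by evaluating a norm-invariant unit. That is exactly the route taken in \cite{CS}.

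Two points in the existence step are stated more strongly than is true, though, and you should be aware of them even while deferring estimates to \cite{CS}. First, the claimed contraction ``a congruence modulo $\mathfrak{m}^{j}$ is improved by one application of $\mathcal{N}$'' is false as a statement about $\mathcal{N}$ on $1+\mathfrak{m}$: the unit $1+T$ is a fixed point of $\mathcal{N}$ (indeed $1+T\in W$), so $\mathcal{N}$ does not carry $1+\mathfrak{m}$ into $1+\mathfrak{m}^{2}$. The correct contraction statement is about the Cauchy property of the iterates $\mathcal{N}^{k}g$ when $g$ lies in the subset $\{g:\mathcal{N}g\equiv g \pmod p\}$, equivalently about the difference of two units taking the same values at the $\xi_{p^{n+1}}-1$; it is that relative contraction, not an absolute one, which makes the limit exist. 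Second, constructing a single $g\in\Z_p[[T]]^{\times}$ with $g(\xi_{p^{n+1}}-1)\equiv\epsilon_{n}\pmod p$ \emph{for every} $n$ is not the harmless first step it appears to be: the consistency across levels of such a mod-$p$ interpolant is essentially a mod-$p$ case of the theorem you are proving. The standard argument (Coleman, and \cite{CS}) avoids this by choosing for each $n$ some $g_{n}$ with $g_{n}(\xi_{p^{n+1}}-1)=\epsilon_{n}$ exactly and passing to a diagonal sequence $\mathcal{N}^{k}g_{n+k}$, whose convergence is what the contraction lemma actually provides. With those two repairs your sketch matches the cited proof; as written, the existence paragraph would not go through literally.
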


Let us define $\Delta: \Z_p[[T]]^{\times} \rightarrow \Z_p[[T]]$ by
\[
\Delta(f) = (1+T)f'(T)/f(T).
\] 
For $f \in \Z_p[[T]]$ we define 
\[
\varphi(f)(T) := f((1+T)^p-1).
\]
The trace operator $\psi: \Z_p[[T]] \rightarrow \Z_p[[T]]$ is a continuous $\Z_p$-homomorphism characterized by the identity
\[
(\varphi \circ \psi)(f)(T) = \frac{1}{p} \sum_{\xi \in \mu_{p-1}} f(\xi(1+T)-1).
\]
We set 
\[
\Z_p[[T]]^{\psi = id} = \{f \in \Z_p[[T]] : \psi(f) =f \}.
\]
We define a map 
\[
[Col] : \mathcal{U}_{\infty} \rightarrow \Z_p[[T]]
\]
to be the map $\epsilon \mapsto \Delta(f_{\epsilon}(T))$. 

\begin{corollary} \label{cor:4.2} The map $[Col]$ induces a surjective homomorphism of $\Z_p[[\Gamma]]$-modules 
\[
[Col] : \mathcal{U}_{\infty} \rightarrow \Z_p[[T]]^{\psi = id}
\]
with $ker([Col]) = \mu_{p-1}$. 
\end{corollary}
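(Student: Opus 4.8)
The plan is to factor $[Col]=\Delta\circ(\epsilon\mapsto f_\epsilon)$ through the Coleman isomorphism of theorem~\ref{thm:4.1} and then analyse $\Delta$ on its own. First I would observe that $\Delta:\Z_p[[T]]^{\times}\to\Z_p[[T]]$ is a homomorphism from the multiplicative to the additive group, since $\Delta(fg)=(1+T)(fg)'/(fg)=\Delta(f)+\Delta(g)$; hence $[Col]$ is a ($\Z_p$-linear) group homomorphism. For the $\Z_p[[\Gamma]]$-structure, uniqueness of Coleman power series gives $f_{\sigma\epsilon}(T)=f_\epsilon((1+T)^{\chi(\sigma)}-1)$, and a one-line chain-rule computation yields $\Delta\big(h((1+T)^{c}-1)\big)=c\cdot(\Delta h)((1+T)^{c}-1)$; under the identification $\Z_p[[\Z_p]]\cong\Z_p[[T]]$ the action~(\ref{eq:1.3}) of $c\in\Gamma$ sends $g(T)$ to $c\cdot g((1+T)^{c}-1)$, so these two facts together show $[Col]$ is $\Gamma$-equivariant.

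Next I would show the image of $[Col]$ lies in $\Z_p[[T]]^{\psi=\id}$, via the identity $\Delta\circ\mathcal{N}=\psi\circ\Delta$ on $\Z_p[[T]]^{\times}$. I would prove this by applying $\varphi$: from $\varphi(\mathcal{N}f)(T)=\prod_{\zeta\in\mu_p}f(\zeta(1+T)-1)$ and the elementary relations $\Delta(\varphi h)=p\,\varphi(\Delta h)$ and $\Delta\big(h(\zeta(1+T)-1)\big)=(\Delta h)(\zeta(1+T)-1)$, one checks that $\varphi(\Delta\mathcal{N}f)$ and $\varphi(\psi\,\Delta f)$ both equal $\tfrac1p\sum_{\zeta\in\mu_p}(\Delta f)(\zeta(1+T)-1)$, and $\varphi$ is injective. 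Thus $f\in W$ implies $\psi(\Delta f)=\Delta(\mathcal{N}f)=\Delta f$. For the kernel: $\Delta(f)=0$ forces $f'=0$, so $f=c\in\Z_p^{\times}$ is constant; if moreover $f\in W$ then $\mathcal{N}(c)=c$, and since $\varphi(\mathcal{N}c)=c^{p}$ while $\varphi$ fixes constants, $\mathcal{N}(c)=c^{p}$, so $c^{p}=c$, i.e.\ $c\in\mu_{p-1}$. Under the Coleman isomorphism this kernel is the set of constant norm-coherent sequences $(c,c,\dots)$ with $c\in\mu_{p-1}$, which indeed lie in $\mathcal{U}_\infty$ because the local norm from level $n+1$ to level $n$ raises to the $p$-th power and $c^{p}=c$; hence $\ker([Col])=\mu_{p-1}$.

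Surjectivity onto $\Z_p[[T]]^{\psi=\id}$ is the heart of the matter. Given $g\in\Z_p[[T]]^{\psi=\id}$, I would first produce some $f_0\in\Z_p[[T]]^{\times}$ with $\Delta(f_0)=g$: put $G=\int_0^{T}\tfrac{g(s)}{1+s}\,ds\in T\Q_p[[T]]$ and $f_0=\exp(G)$, so that formally $\Delta(f_0)=(1+T)G'=g$; the point is that $f_0\in\Z_p[[T]]$. By Dwork's lemma for the Frobenius lift $\varphi$, this amounts to $G-\tfrac1p\varphi(G)\in T\Z_p[[T]]$, and differentiating gives $\big(G-\tfrac1p\varphi(G)\big)'=(g-\varphi g)/(1+T)$. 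Since $\psi g=g$ we have $\psi(g-\varphi g)=\psi g-\psi\varphi g=0$; identifying $\Z_p[[T]]$ with the measures on $\Z_p$, $\ker\psi$ is exactly the measures supported on $\Z_p^{\times}$, so $g-\varphi g=\int_{\Z_p^{\times}}(1+T)^{x}\,d\nu(x)$ for some measure $\nu$, whence $G-\tfrac1p\varphi(G)=\int_{\Z_p^{\times}}\tfrac{(1+T)^{x}-1}{x}\,d\nu(x)\in T\Z_p[[T]]$ because $1/x$ is bounded on $\Z_p^{\times}$. It remains to adjust $f_0$ by a constant: from $\Delta(\mathcal{N}f_0)=\psi(\Delta f_0)=\psi(g)=g=\Delta(f_0)$ we get $c_0:=\mathcal{N}(f_0)/f_0\in\ker\Delta=\Z_p^{\times}$, and $c_0\equiv1\bmod p$ (the norm operator satisfies $\mathcal{N}(f)\equiv f\bmod p$); since $1+p\Z_p$ is uniquely $(p-1)$-divisible there is a unique $d\in1+p\Z_p$ with $d^{p-1}=c_0^{-1}$, and then $f:=df_0$ satisfies $\mathcal{N}(f)=\mathcal{N}(d)\mathcal{N}(f_0)=d^{p}c_0f_0=d^{p}d^{-(p-1)}f_0=f$ and $\Delta(f)=\Delta(f_0)=g$. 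The preimage of $f\in W$ under the Coleman isomorphism is then an $\epsilon\in\mathcal{U}_\infty$ with $[Col](\epsilon)=g$.

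The step I expect to be the main obstacle is producing $f_0$ in the surjectivity argument: one must use the precise form of Dwork's lemma relative to the Frobenius lift $\varphi$ (rather than $T\mapsto T^{p}$), verify that $\exp$ maps $pT\Z_p[[T]]$ into $1+pT\Z_p[[T]]$ for the odd prime $p$, and justify that $\ker\psi$ consists of the measures supported on $\Z_p^{\times}$ (equivalently, $\psi((1+T)^{j})=0$ for $p\nmid j$). These, together with $\mathcal{N}(f)\equiv f\bmod p$, are routine but need care; everything else --- the homomorphism and equivariance properties, the identity $\Delta\mathcal{N}=\psi\Delta$, and the kernel computation --- is either a short computation or a foundational fact about Coleman's norm operator recalled from \cite{CS}.
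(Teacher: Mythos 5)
Your proposal is correct, but it takes a considerably more self-contained route than the paper does. The paper's proof of Corollary~\ref{cor:4.2} is almost entirely a citation: it appeals to Theorem 2.4.6 and Lemma 2.4.5 of \cite{CS} for both the surjectivity onto $\Z_p[[T]]^{\psi=\id}$ and the computation $\ker[Col]=\mu_{p-1}$, and only supplies the short argument for $\Gamma$-equivariance (which you also give, in essentially the same form: Coleman uniqueness gives $f_{\sigma\epsilon}(T)=f_\epsilon((1+T)^{\chi(\sigma)}-1)$, and the chain rule transports this to the twisted action through $\Delta$). What you add is a from-scratch derivation of the cited facts: the identity $\Delta\circ\mathcal{N}=\psi\circ\Delta$ to place the image in $\Z_p[[T]]^{\psi=\id}$, the observation that $\ker\Delta$ consists of the constants and hence $\ker\Delta\cap W=\mu_{p-1}$, and the heart of the matter, surjectivity via $f_0=\exp\bigl(\int_0^T g(s)\,ds/(1+s)\bigr)$, using Dwork's lemma for the Frobenius lift $\varphi$ together with $\psi g=g$ to verify integrality, and then the norm-compatibility correction $f=df_0$ with $d^{p-1}=\bigl(\mathcal{N}(f_0)/f_0\bigr)^{-1}\in 1+p\Z_p$. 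All of these steps check out (in particular $\tfrac{1}{x}\binom{x}{n}\in\Z_p$ for $x\in\Z_p^\times$, $\mathcal{N}(f)\equiv f\bmod p$, and the unique $(p-1)$-divisibility of $1+p\Z_p$). In short, you reproved the content of the Coates--Sujatha lemmas rather than citing them, which buys self-containedness at the cost of length; the paper's approach is just to defer to the standard reference.
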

\begin{proof} It follows from \cite[Theorem 2.4.6 and lemma 2.4.5]{CS} that $[Col]$ is surjective with $ker([Col]) = \mu_{p-1}$. 

We recall that the action of $\Gamma$ on $\Z_p[[T]]$ is defined by $c(f) = cf((1+T)^c-1)$. The map $\epsilon \mapsto f_{\epsilon}(T)$ commutes with 
the action of $c \in \Gamma$ defined by $c(f) = f((1+T)^c-1)$. Hence, when we compose with $\Delta$, we get that $[Col]$ is a $\Gamma$-map. 
The map $[Col]$ is a morphism of $\Z$ modules, hence by continuity it is a $\Z_p$, whence a map of $\Z_p[\Gamma]$-modules. 
\end{proof}

We recall the definition of the Coates-Wiles homomorphisms. Let $\epsilon \in \mathcal{U}$. We define numbers $CW_k^{\epsilon}$ for $k \geq 1$ 
by the identity. 
\begin{equation} \label{eq:4.1}
\Delta(f_{\epsilon})(e^X-1) = \sum_{k=1}^{\infty} \frac{CW_k^{\epsilon}}{(k-1)!} X^{k-1}.
\end{equation}
We get then maps
\[
CW_k : \mathcal{U}_{\infty} \rightarrow \Z_p(k).
\]
which are morphisms of $\Z_p[[\Gamma]]$-modules (see \cite[lemma 2.6.2]{CS}). 

We recall that 
\[
\mathcal{P} : \Z_p[[\Z_p]] \rightarrow \Z_p[[T]]
\]
is the Iwasawa isomorphism characterized by $\mathcal{P}([1]) = (1+T)$ and that in our notation if $f \in \Z_p[[T]]$ then $\mu_f := \mathcal{P}^{-1}(f)$. Then we have 
\begin{equation} \label{eq:4.2}
f(e^X-1) = \sum_{k=0}^{\infty} (\int_{\Z_p} x^k d\mu_f(x)) \frac{X^k}{k!}.
\end{equation}

\begin{lemma}  \label{lem:4.3}
Let $f \in \Z_p[[T]]^{\psi = id}$. Then 
\begin{itemize}
\item[(i)] $\mathcal{P}(\mu_f^{\times}) = f(T) - f((1+T)^p-1)$, 
\item[(ii)] $(1-p^k) \int_{\Z_p} x^k d\mu_f(x) = \int_{\Z_p^{\times}} x^k d\mu_f^{\times}(x)$.
\end{itemize}
\end{lemma}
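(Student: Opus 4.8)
The plan is to prove (i) by tracking the effect of restriction to $\Z_p^\times$ on the Iwasawa power series, and then to deduce (ii) formally from (i) together with the expansion (\ref{eq:4.2}).

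For (i), I would start from the decomposition $\mu_f^\times = \mu_f - \mu_f|_{p\Z_p}$, where $\mu_f|_{p\Z_p}$ is the restriction of $\mu_f$ to the open subgroup $p\Z_p$. Applying the Iwasawa isomorphism, which sends a measure $\mu$ on $\Z_p$ to $\int_{\Z_p}(1+T)^x\,d\mu(x)$, and using $\mathcal{P}(\mu_f)=f$, the assertion reduces to the identity $\int_{p\Z_p}(1+T)^x\,d\mu_f(x)=f((1+T)^p-1)$. To get this, I would extend scalars to $\Q_p(\mu_p)$ and use that the characteristic function of $p\Z_p\subset\Z_p$ is $x\mapsto p^{-1}\sum_{\zeta\in\mu_p}\zeta^x$; this gives
\[
\int_{p\Z_p}(1+T)^x\,d\mu_f(x)=\frac1p\sum_{\zeta\in\mu_p}f(\zeta(1+T)-1)=(\varphi\circ\psi)(f)(T)
\]
by the defining property of the trace operator $\psi$. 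Since $\psi(f)=f$ by hypothesis, the right-hand side equals $\varphi(f)(T)=f((1+T)^p-1)$, which proves (i).

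For (ii), I would set $h(T):=\mathcal{P}(\mu_f^\times)=f(T)-f((1+T)^p-1)\in\Z_p[[T]]$, so that $\mu_f^\times=\mathcal{P}^{-1}(h)$ and (\ref{eq:4.2}) applies with $h$ in place of $f$:
\[
h(e^X-1)=\sum_{k=0}^{\infty}\Big(\int_{\Z_p^\times}x^k\,d\mu_f^\times(x)\Big)\frac{X^k}{k!},
\]
where the integral may be taken over $\Z_p^\times$ since $\mu_f^\times$ is supported there. On the other hand, substituting $T=e^X-1$, so that $(1+T)^p-1=e^{pX}-1$, and expanding $f(e^X-1)$ and $f(e^{pX}-1)$ via (\ref{eq:4.2}) applied to $f$ itself, one obtains
\[
h(e^X-1)=f(e^X-1)-f(e^{pX}-1)=\sum_{k=0}^{\infty}(1-p^k)\Big(\int_{\Z_p}x^k\,d\mu_f(x)\Big)\frac{X^k}{k!}.
\]
Comparing the coefficients of $X^k/k!$ yields (ii).

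The main (and essentially only non-formal) point is the identity $\int_{p\Z_p}(1+T)^x\,d\mu_f(x)=(\varphi\circ\psi)(f)(T)$: this is where one has to be careful with the scalar extension to $\Q_p(\mu_p)$ and with the normalization of $\psi$, and it is where the hypothesis $f\in\Z_p[[T]]^{\psi=\mathrm{id}}$ actually enters. Everything else is bookkeeping with the Iwasawa isomorphism and the expansion (\ref{eq:4.2}), so I do not anticipate further difficulty.
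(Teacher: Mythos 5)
Your proposal is correct and follows essentially the same route as the paper: derive $\mathcal{P}(\mu_f^\times)(T)=f(T)-\tfrac1p\sum_{\zeta\in\mu_p}f(\zeta(1+T)-1)$, use $\psi(f)=f$ to rewrite the sum as $f((1+T)^p-1)$, and then compare Taylor coefficients at $T=e^X-1$ via the expansion (\ref{eq:4.2}). The only difference is cosmetic: you spell out the characteristic-function computation that justifies the restriction formula, whereas the paper states it as a known identity (and note the paper's defining equation for $\psi$ has a typo $\mu_{p-1}$ in place of $\mu_p$, which you correctly avoid).
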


\begin{proof} Let $f \in \Z_p[[T]]^{\psi = id}$. The condition $\psi(f) = f$ is equivalent to the condition $(\varphi \circ \psi)(f) = \varphi(f)$. Hence it is equivalent to the equality
\begin{equation} \label{eq:4.3}
\frac{1}{p} \sum_{\xi \in \mu_p} f(\xi(1+T)-1) = f((1+T)^p-1).
\end{equation}
The power series 
\[
\mathcal{P}(\mu_f^{\times})(T) = f(T) - \frac{1}{p} \sum_{\xi \in \mu_p} f(\xi(1+T)-1).
\]
Hence if $f \in \Z_p[[T]]^{\psi = id}$ then 
\[
\mathcal{P}(\mu_f^{\times})(T) = f(T) - f((1+T)^p-1).
\]
Hence it follows that 
\[
f(e^X-1) - f(e^{pX}-1) = \sum_{k=0}^{\infty} \int_{\Z_p^{\times}} x^k d \mu_f^{\times}(x) \frac{X^k}{k!}.
\]
On the other side we have that 
\[
f(e^X-1) - f(e^{pX}-1) = \sum_{k=0}^{\infty} \int_{\Z_p}x^k d\mu_f(x) \frac{X^k}{k!} - \sum_{k=0}^{\infty} \int_{\Z_p}x^k d\mu_f(x) \frac{p^kX^k}{k!}.
\]
Hence it follows that
\[
\int_{\Z_p^{\times}} x^k d\mu_f^{\times}(x) = (1-p^k) \int_{\Z_p} x^k d \mu_f(x)
\]
for $k \geq 0$. In particular, $\int_{\Z_p^{\times}} d \mu_f^{\times}(x) = 0$.

\end{proof}

We state the last result as  a corollary. 

\begin{corollary} \label{cor:4.4}
Let $f \in \Z_p[[T]]^{\psi = id}$. Then 
\[
\int_{\Z_p^{\times}} d \mu_f^{\times}(x) = 0.
\]
\end{corollary}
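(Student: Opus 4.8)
This is the special case $k=0$ of Lemma \ref{lem:4.3}(ii), so the plan is simply to invoke that lemma. Applying the identity
\[
(1-p^k)\int_{\Z_p} x^k\, d\mu_f(x) = \int_{\Z_p^{\times}} x^k\, d\mu_f^{\times}(x)
\]
at $k=0$, the left-hand side carries the factor $1 - p^0 = 0$, so the right-hand side $\int_{\Z_p^{\times}} d\mu_f^{\times}(x)$ vanishes. No further computation is needed; there is no real obstacle, since the work has already been done inside the proof of Lemma \ref{lem:4.3}.

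If one prefers an argument that does not quote part (ii), the same conclusion follows from part (i): by Lemma \ref{lem:4.3}(i) we have $\mathcal{P}(\mu_f^{\times})(T) = f(T) - f((1+T)^p - 1)$, and evaluating at $T = 0$ gives $f(0) - f(0) = 0$. Since $\int_{\Z_p} d\mu = \mathcal{P}(\mu)(0)$ for any $\mu \in \Z_p[[\Z_p]]$ (as $\mathcal{P}([a])(T) = (1+T)^a$ has value $1$ at $T=0$, matching $\int_{\Z_p} d[a] = 1$), we conclude $\int_{\Z_p^{\times}} d\mu_f^{\times}(x) = \mathcal{P}(\mu_f^{\times})(0) = 0$. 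Either route makes the corollary immediate.
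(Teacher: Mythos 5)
Your first argument is exactly the paper's: Corollary \ref{cor:4.4} is stated in the paper as the $k=0$ specialization of Lemma \ref{lem:4.3}(ii), with the vanishing coming from the factor $1-p^0=0$. The alternative route via part (i) and evaluating $\mathcal{P}(\mu_f^\times)$ at $T=0$ is also correct and unobjectionable, but the main proof matches the paper verbatim.
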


\begin{corollary} \label{cor:4.5}
Let $\epsilon \in \mathcal{U}_{\infty}$. Then 
\[
(1-p^k) \int_{\Z_p} x^k d \mu_{\Delta(f_{\epsilon})}(x) = \int_{\Z_p^{\times}} x^k d \mu_{\Delta(f_{\epsilon})}^{\times}(x)
\]
for $k \geq 0$.
\end{corollary}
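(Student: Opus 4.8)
The plan is to obtain this as an immediate specialization of Lemma \ref{lem:4.3}(ii). That lemma gives the identity
\[
(1-p^k)\int_{\Z_p} x^k\, d\mu_f(x) = \int_{\Z_p^\times} x^k\, d\mu_f^\times(x)
\]
for every $f \in \Z_p[[T]]^{\psi=id}$ and every $k \geq 0$, so the whole task reduces to verifying that the power series $\Delta(f_\epsilon)$ attached to $\epsilon \in \mathcal{U}_\infty$ indeed lies in $\Z_p[[T]]^{\psi=id}$. But this is exactly the statement of Corollary \ref{cor:4.2}: by definition $[Col](\epsilon) = \Delta(f_\epsilon)$, and Corollary \ref{cor:4.2} asserts that $[Col]$ takes values in $\Z_p[[T]]^{\psi=id}$. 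In particular $\Delta(f_\epsilon)$ is integral — which is not obvious from the bare formula $\Delta(f) = (1+T)f'(T)/f(T)$ — and satisfies $\psi(\Delta(f_\epsilon)) = \Delta(f_\epsilon)$, coming ultimately from Coleman's norm-invariance $\mathcal{N}(f_\epsilon)=f_\epsilon$ (Theorem \ref{thm:4.1}).

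Granting this, I would simply set $f = \Delta(f_\epsilon)$ in Lemma \ref{lem:4.3}(ii) and read off the claimed equality
\[
(1-p^k)\int_{\Z_p} x^k\, d\mu_{\Delta(f_\epsilon)}(x) = \int_{\Z_p^\times} x^k\, d\mu_{\Delta(f_\epsilon)}^\times(x)
\]
for all $k \geq 0$. There is no real obstacle: the corollary is recorded separately only because it is the precise form in which the identity is used afterwards (notably when comparing with Ihara's formula and isolating the $p$-adic $L$-values). If one insisted on a proof not invoking Corollary \ref{cor:4.2}, one would instead re-derive $\psi(\Delta(f_\epsilon)) = \Delta(f_\epsilon)$ directly: apply the logarithmic derivative to $\mathcal{N}(f_\epsilon) = f_\epsilon$ and use the compatibility of $\mathcal{N}$ with $\psi$ under $\Delta$, exactly as in \cite{CS}; but since Corollary \ref{cor:4.2} is already available this extra work is unnecessary.
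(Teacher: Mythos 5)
Your proposal is correct and matches the paper's (implicit) reasoning exactly: the corollary is obtained by observing that $\Delta(f_\epsilon) = [Col](\epsilon) \in \Z_p[[T]]^{\psi=\id}$ by Corollary \ref{cor:4.2}, and then specializing Lemma \ref{lem:4.3}(ii) to $f = \Delta(f_\epsilon)$. The paper records Corollary \ref{cor:4.5} without a separate proof precisely because it is this immediate combination.
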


\section{Group of units} 

Following Ihara (see \cite[Section IV, page 93]{Ihara}) we set 
\[
\mathcal{U}_n^1 := \{ u \in \Z_p[\mu_{p^{n+1}}]^{\times} : N(u)=1 \text{ and } u \equiv 1 (mod \ (\xi_{p^{n+1}}- 1))\},
\]
where $N : \Q_p(\mu_{p^{n+1}}) \rightarrow \Q_p$ is the norm map.  Let 
\[
\mathcal{U}_{\infty}^1 := \varprojlim_n \mathcal{U}_n^1
\]
be the projective limit with respect to the relative norm maps. 


\begin{lem} \label{lem:6.1} We have $\mathcal{U}_{\infty}/\mathcal{U}_{\infty}^1 \cong (\Z/p)^{\times}$.
\end{lem}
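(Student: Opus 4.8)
The plan is to identify $\mathcal{U}_\infty^1$ with the projective limit of the groups of principal units and then compute the quotient via the Teichm\"uller decomposition. For each $n$ write $\mathcal{U}_n^{(1)} := 1 + (\xi_{p^{n+1}}-1)\Z_p[\mu_{p^{n+1}}]$ for the group of principal units of $\Q_p(\mu_{p^{n+1}})$. Since $\xi_{p^{n+1}}-1$ is a uniformizer, the congruence condition in the definition of $\mathcal{U}_n^1$ just says $u\in\mathcal{U}_n^{(1)}$, so that $\mathcal{U}_n^1 = \mathcal{U}_n^{(1)}\cap \ker\!\big(N:\mathcal{U}_n\to\Z_p^\times\big)$ with $N=N_{\Q_p(\mu_{p^{n+1}})/\Q_p}$. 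The relative norm carries principal units to principal units, so $\mathcal{U}_\infty^{(1)}:=\varprojlim_n \mathcal{U}_n^{(1)}$ makes sense, and clearly $\mathcal{U}_\infty^1\subseteq\mathcal{U}_\infty^{(1)}$.

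The key step is to show the reverse inclusion, i.e. that every norm-compatible system $(u_n)\in\mathcal{U}_\infty$ already satisfies $N_{\Q_p(\mu_{p^{n+1}})/\Q_p}(u_n)=1$ for all $n$. By transitivity of the norm together with the compatibility $u_n = N_{\Q_p(\mu_{p^{n+2}})/\Q_p(\mu_{p^{n+1}})}(u_{n+1})$, the absolute norms $c:= N_{\Q_p(\mu_{p^{n+1}})/\Q_p}(u_n)$ are independent of $n$. On the other hand, $c\in N(\mathcal{U}_n)$ for every $n$, and by local class field theory $N(\Q_p(\mu_{p^{n+1}})^\times)$ has index $p^n(p-1)$ in $\Q_p^\times$ and contains $p = N(1-\xi_{p^{n+1}})$; comparing this with the subgroup $p^{\Z}\cdot(1+p^{n+1}\Z_p)$, which has the same index and is visibly contained in it, forces $N(\Q_p(\mu_{p^{n+1}})^\times)=p^{\Z}(1+p^{n+1}\Z_p)$ and hence $N(\mathcal{U}_n)=1+p^{n+1}\Z_p$. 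Therefore $c\in\bigcap_n(1+p^{n+1}\Z_p)=\{1\}$. So although $\mathcal{U}_n^1$ is a proper subgroup of $\mathcal{U}_n^{(1)}$ at each finite level, in the limit any $(u_n)\in\mathcal{U}_\infty^{(1)}$ has $u_n\in\ker N$ for all $n$, giving $\mathcal{U}_\infty^1=\mathcal{U}_\infty^{(1)}$.

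It remains to compute $\mathcal{U}_\infty/\mathcal{U}_\infty^{(1)}$. From the exact sequences $1\to\mathcal{U}_n^{(1)}\to\mathcal{U}_n\to\F_p^\times\to1$ and the fact that the relative norm $\mathcal{U}_{n+1}\to\mathcal{U}_n$ acts on the common Teichm\"uller subgroup $\mu_{p-1}$ (equivalently, on $\F_p^\times$) by the $p$-th power map, which is an automorphism since $\gcd(p,p-1)=1$, one passes to the limit: the transition maps of the system $\{\F_p^\times\}$ are isomorphisms, so $\varprojlim_n\F_p^\times\cong\F_p^\times=(\Z/p)^\times$, and the surjectivity of $\mathcal{U}_\infty\to\varprojlim_n\F_p^\times$ is checked by lifting a compatible system of residues to the system of its Teichm\"uller representatives, which is automatically norm-compatible. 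Combining the two steps yields $\mathcal{U}_\infty/\mathcal{U}_\infty^1=\mathcal{U}_\infty/\mathcal{U}_\infty^{(1)}\cong(\Z/p)^\times$.

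The projective-limit bookkeeping causes no difficulty, since everything in sight is profinite and the argument above avoids any $\varprojlim^{1}$ term altogether. The only genuinely arithmetic input is the containment $N_{\Q_p(\mu_{p^{n+1}})/\Q_p}(\mathcal{U}_n)\subseteq 1+p^{n+1}\Z_p$, equivalently that the conductor of $\Q_p(\mu_{p^{n+1}})/\Q_p$ is $p^{n+1}$; this classical point is the one step I expect to spell out with care.
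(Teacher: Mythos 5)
Your proof is correct and follows essentially the same route as the paper's: reduce to showing a norm-compatible system of principal units has trivial absolute norm, via a bound $N(\mathcal{U}_n)\subseteq 1+p^{k(n)}\Z_p$ with $k(n)\to\infty$ (the paper cites Fesenko--Vostokov for $N(u_n)\equiv 1 \pmod{p^n}$; your $k(n)=n+1$ is the exact conductor), then split off the Teichm\"uller part --- the paper via the product decomposition $\mathcal{U}_n\cong\mathcal{U}_n'\times(\Z/p)^\times$, you via the equivalent exact sequence with transition maps acting as the $p$-th power on $\F_p^\times$. The one phrase to fix is \emph{``visibly contained''}: the containment $1+p^{n+1}\Z_p\subseteq N\big(\Q_p(\mu_{p^{n+1}})^\times\big)$ is precisely the conductor statement you later and rightly flag as the step needing care, so the index comparison does not prove it but only upgrades it to equality; you should either cite it directly or derive it (e.g.\ from Lubin--Tate theory or the explicit computation in Fesenko--Vostokov) rather than present it as obvious.
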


\begin{proof} Let  
\[
\mathcal{U}_n' = \{u \in \mathbb{Z}_p[\mu_{p^{n+1}}]^{\times} : u \equiv 1 (\text{mod } (1- \xi_{p^{n+1}})) \}
\]
and 
$\mathcal{U}_{\infty}' = \varprojlim_n \mathcal{U}_n'$. We have $\mathcal{U}_n \cong \mathcal{U}_n' \times (\mathbb{Z}/p\mathbb{Z})^{\times}$ for all $n \geq 1$. Hence it follows that 
\[
\mathcal{U}_{\infty} \cong \mathcal{U}_{\infty}' \times (\mathbb{Z}/p\mathbb{Z})^{\times}.
\]

Next we show that $\mathcal{U}_{\infty}' = \mathcal{U}_{\infty}^1$. Let $u_n \in \mathcal{U}_n'$. 
Then $N(u_n) \equiv 1 (\text{mod } p^n)$ (see \cite[page 113, exercise 4.a]{FV}). 
Let $(u_n) \in \varprojlim_n \mathcal{U}_n'$. Then for all $m \geq n$,  we have $N(u_m) = N(N_{m,n}(u_m)) = N(u_n)$, 
where $N_{m,n}$ is the relative norm map. Hence $N(u_n) \equiv 1 (\text{mod } p^m)$ for all $m \geq n$. 
Therefore $u_n \in \mathcal{U}_n^1$. Hence $\mathcal{U}_{\infty}' = \mathcal{U}_{\infty}^1$ and the result follows.   
\end{proof}

We denote by $E_n$ and $C_n$, the group of units and cyclotomic units, respectively, in $\mathbb{Q}(\mu_{p^n})$. 
Let $\mathbb{Q}(\mu_{p^n})^+$ be the maximal real subfield of $\mathbb{Q}(\mu_{p^n})$. 
We denote by $E_n^+$ and $C_n^+$ the group of units and cyclotomic units in $\mathbb{Q}(\mu_{p^n})^+$ and by 
$\mathcal{E}_n^+$ and $\mathcal{C}_n^+$ the group of $p$-units and cyclotomic $p$-units in $\mathbb{Q}(\mu_{p^n})^+$.

\begin{lem}\label{lem:6.2} 
The inclusions $E_n \hookrightarrow \mathcal{E}_n$ and $E_n^+\hookrightarrow \mathcal{E}_n^+$induce  isomorphisms 
\[
E_n/C_n \xrightarrow{\cong} \mathcal{E}_n/\mathcal{C}_n \;\;{\rm and}\;\;E_n^+/C_n^+ \xrightarrow{\cong} \mathcal{E}_n^+/\mathcal{C}_n^+.
\]
\end{lem}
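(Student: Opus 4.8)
The plan is to reduce both isomorphisms to the second isomorphism theorem. For the first assertion I will establish
\[
\mathcal{E}_n = E_n \cdot \mathcal{C}_n \qquad\text{and}\qquad E_n \cap \mathcal{C}_n = C_n,
\]
after which the composite $E_n \hookrightarrow \mathcal{E}_n \twoheadrightarrow \mathcal{E}_n/\mathcal{C}_n$ is surjective with kernel $C_n$, so it induces $E_n/C_n \xrightarrow{\cong} \mathcal{E}_n/\mathcal{C}_n$, and this is exactly the map coming from the inclusion. The second assertion follows by running the same argument inside $\mathbb{Q}(\mu_{p^n})^+$.

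The two displayed facts are consequences of the fact that $p$ is totally ramified in $\mathbb{Q}(\mu_{p^n})$. Let $\mathfrak{p}$ be the unique prime above $p$, generated by $1-\xi_{p^n}$. Since $\mathfrak p$ is the only prime dividing $p$, a $p$-unit lying in $\ker v_{\mathfrak p}$ has trivial valuation at every prime and is therefore a genuine unit, so we have a short exact sequence $0\to E_n\to\mathcal{E}_n\xrightarrow{v_{\mathfrak p}}\mathbb{Z}\to 0$ (surjective because $v_{\mathfrak p}(1-\xi_{p^n})=1$). As $1-\xi_{p^n}$ is among the chosen generators of $\mathcal{C}_n$, already $v_{\mathfrak p}(\mathcal{C}_n)=\mathbb{Z}$, so for any $x\in\mathcal{E}_n$ the element $x(1-\xi_{p^n})^{-v_{\mathfrak p}(x)}$ lies in $E_n$, giving $\mathcal{E}_n=E_n\cdot\mathcal{C}_n$. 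For the intersection, $C_n\subseteq E_n\cap\mathcal{C}_n$ is clear; conversely, given $u\in E_n\cap\mathcal{C}_n$, use the unique representation $u=\pm\xi_{p^n}^{a_0}\prod_{i\in S_n}(1-\xi_{p^n}^i)^{a_i}$ from \cite[Theorem 8.9]{Wa}. Applying $v_{\mathfrak p}$ and using $v_{\mathfrak p}(1-\xi_{p^n}^i)=1$ for $i\in S_n$ forces $\sum_i a_i=0$, hence
\[
u=\pm\xi_{p^n}^{a_0}\prod_{i\in S_n}\left(\frac{1-\xi_{p^n}^i}{1-\xi_{p^n}}\right)^{a_i}\in C_n,
\]
since each $(1-\xi_{p^n}^i)/(1-\xi_{p^n})$ with $(i,p)=1$ is a cyclotomic unit. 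This gives $E_n/C_n\cong\mathcal{E}_n/\mathcal{C}_n$.

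For $\mathbb{Q}(\mu_{p^n})^+$ the argument is identical: $p$ is totally ramified there as well, with a unique prime $\mathfrak p^+$ lying below $\mathfrak p$ and $e(\mathfrak p/\mathfrak p^+)=2$, so $0\to E_n^+\to\mathcal{E}_n^+\xrightarrow{v_{\mathfrak p^+}}\mathbb{Z}\to 0$ is exact; moreover $\mathcal{C}_n^+$ contains the norm $(1-\xi_{p^n})(1-\xi_{p^n}^{-1})$ of $1-\xi_{p^n}$, which has $v_{\mathfrak p}$-valuation $2$ and hence $v_{\mathfrak p^+}$-valuation $1$, so $\mathcal{E}_n^+=E_n^+\cdot\mathcal{C}_n^+$. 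Combined with $E_n^+\cap\mathcal{C}_n^+=C_n^+$ (from the analogous structure result for the cyclotomic $p$-units of the real field), the second isomorphism theorem yields $E_n^+/C_n^+\cong\mathcal{E}_n^+/\mathcal{C}_n^+$. I expect the only point needing care to be the bookkeeping with the precise normalizations of the generators of $\mathcal{C}_n^+$ — that is, checking that $\mathcal{C}_n^+$ really contains a uniformizer at $\mathfrak p^+$ and that $E_n^+\cap\mathcal{C}_n^+=C_n^+$; everything else is formal.
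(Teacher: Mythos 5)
The paper states Lemma~\ref{lem:6.2} without proof, so there is nothing to compare against; your argument supplies one and it is correct. The imaginary case is complete: the exact sequence $1\to E_n\to\mathcal{E}_n\xrightarrow{v_{\mathfrak p}}\Z\to 0$, together with $v_{\mathfrak p}(1-\xi_{p^n})=1$, gives $\mathcal{E}_n=E_n\cdot\mathcal{C}_n$; and the unique--representation statement from \cite[Theorem~8.9]{Wa} (the same one the paper invokes in the proof of Lemma~\ref{lem:3.2}) gives $E_n\cap\mathcal{C}_n=C_n$, so the second isomorphism theorem does the rest. The bookkeeping you flag for the real case is genuine but routine, and you can close it by intersecting with $\Q(\mu_{p^n})^+$ rather than redoing the representation argument: the norm $(1-\xi_{p^n})(1-\xi_{p^n}^{-1})$ lies in $\mathcal{C}_n$ and is real, hence in $\mathcal{C}_n^+=\mathcal{C}_n\cap\Q(\mu_{p^n})^+$, which gives surjectivity of $v_{\mathfrak p^+}$ on $\mathcal{C}_n^+$; and $E_n^+\cap\mathcal{C}_n^+\subseteq(E_n\cap\mathcal{C}_n)\cap\Q(\mu_{p^n})^+=C_n\cap\Q(\mu_{p^n})^+=C_n^+$, the last equality because $C_n=\mu_{p^n}C_n^+$ (\cite[Lemma~8.1]{Wa}) and $\mu_{p^n}\cap\Q(\mu_{p^n})^+=\{1\}$ for odd $p$. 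One small remark: if one takes Washington's definition of the cyclotomic units as $C_n=E_n\cap\mathcal{C}_n$, then that containment is definitional and your Theorem~8.9 computation is not needed for it, but it does no harm and makes the proof self-contained regardless of which equivalent definition is adopted.
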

\begin{prop}\label{prop:6.3}
Let us assume that $p$ does not divide the class number $h(\mathbb{Q}(\mu_{p})^+)$,i.e, that the Vandiver conjecture holds for a prime $p$. Then 
$p$ does not divide $\mid \mathcal{E}_n/\mathcal{C}_n\mid$ for all $n \geq 1$.
\end{prop}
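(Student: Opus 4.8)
The plan is to reduce the proposition to the classical fact that Vandiver's conjecture for $p$ forces $p \nmid h(\QQ(\mu_{p^n})^+)$ for all $n$.

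First I would apply Lemma~\ref{lem:6.2}, which gives $\mathcal{E}_n/\mathcal{C}_n \cong E_n/C_n$ and $\mathcal{E}_n^+/\mathcal{C}_n^+ \cong E_n^+/C_n^+$; thus it suffices to bound the $p$-part of $[E_n : C_n]$. Since $p$ is odd and the conductor $p^n$ is a prime power, the Hasse unit index of $\QQ(\mu_{p^n})$ equals $1$, so $E_n = \mu_{p^n}E_n^+$, and likewise $C_n = \mu_{p^n}C_n^+$, with $\mu_{p^n}$ meeting $E_n^+$ (hence $C_n^+$) trivially. Dividing out $\mu_{p^n} \subseteq C_n$ yields $E_n/C_n \cong E_n^+/C_n^+$, so $|\mathcal{E}_n/\mathcal{C}_n| = [E_n^+ : C_n^+]$. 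By the classical index formula for cyclotomic units in a real cyclotomic field (Kummer for $n=1$, and \cite[Theorem~8.2]{Wa} in general), $[E_n^+ : C_n^+] = h_n^+$, where $h_n^+ := h(\QQ(\mu_{p^n})^+)$. Hence $|\mathcal{E}_n/\mathcal{C}_n| = h_n^+$, and the proposition becomes the assertion that $p \nmid h_n^+$ for every $n \geq 1$.

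It then remains to propagate the Vandiver hypothesis — which is precisely the case $n = 1$ — up the cyclotomic tower. Let $A_n^+$ be the $p$-Sylow subgroup of the class group of $\QQ(\mu_{p^n})^+$ and $G_n := \Gal(\QQ(\mu_{p^n})^+/\QQ(\mu_p)^+)$, a finite $p$-group. In this extension the only ramified prime is the one above $p$, and it is totally ramified, so the genus (ambiguous class number) formula bounds $|(A_n^+)^{G_n}|$ by $|A_1^+|$. Vandiver's conjecture gives $A_1^+ = 0$, hence $(A_n^+)^{G_n} = 0$; since a nonzero finite module over a finite $p$-group has nonzero invariants, this forces $A_n^+ = 0$, i.e. $p \nmid h_n^+$. (This implication is \cite[Corollary~10.17]{Wa}.) Together with the previous paragraph, this is exactly $p \nmid |\mathcal{E}_n/\mathcal{C}_n|$, which proves the proposition.

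The first two steps are routine bookkeeping with unit indices, already set up by Lemma~\ref{lem:6.2}. The step I expect to carry the real weight is the last one — propagating Vandiver up the tower — as it is the only place that genuinely uses arithmetic, namely the genus formula (equivalently, the Iwasawa control theorem) together with the fact that exactly one prime ramifies in the tower and does so totally; in a streamlined write-up one would simply cite \cite{Wa} here.
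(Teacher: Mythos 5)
Your proof is correct and follows essentially the same route as the paper: reduce via Lemma~\ref{lem:6.2} to $E_n/C_n$, identify $E_n/C_n \cong E_n^+/C_n^+$ using $E_n = \mu_{p^n}E_n^+$ and $C_n = \mu_{p^n}C_n^+$, invoke the index formula $[E_n^+ : C_n^+] = h_n^+$ from \cite[Theorem 8.2]{Wa}, and propagate Vandiver up the tower to get $p \nmid h_n^+$ for all $n$. The only difference is cosmetic: the paper simply cites \cite[Corollary 10.6]{Wa} for the last step, whereas you unpack the genus-theory argument behind that citation.
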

\begin{proof} The Vandiver conjecture for $p$ implies that $p$ does not divide the class number $h(\mathbb{Q}(\mu_{p^n})^+)$ for 
all $n \geq 1$ (\cite[corollary 10.6]{Wa}). Hence $p$ does not divide $|E_n^+/C_n^+|$ for all $n \geq 1$ (\cite[theorem 8.2]{Wa}). 
We have $E_n = \mu_{p^n} E_n^+$ (\cite[theorem 4.12 and corollary 4.13]{Wa}) and $C_n = \mu_{p^n} C_n^+$ (\cite[lemma 8.1]{Wa}). 
Hence it follows that $E_n^+/C_n^+ \cong E_n/C_n$. Therefore it follows from the above lemma that $p$ does not 
divide the order of $\mathcal{E}_n/\mathcal{C}_n$ for all $n \geq 1$. 
\end{proof}

Let $\mathcal{M}_p$ be the maximal abelian pro-$p$ extension of $\mathbb{Q}(\mu_{p^{\infty}})$ unramified outside $p$ and 
let $\mathcal{L}_p$ be the maximal extension of $\mathbb{Q}(\mu_{p^{\infty}})$ contained in $\mathcal{M}_p$ that is unramified. 
Local class field theory defines a canonical  map $\mathbb{Z}_p[[\Gamma]]$-modules
\[
[CL] : \mathcal{U}_{\infty}^1 \rightarrow Gal(\mathcal{M}_p/\mathbb{Q}(\mu_{p^{\infty}})).
\]
The map $[CL]$ induces a surjective morphism of $\mathbb{Z}_p[[\Gamma]]$-modules
\[
[CL] : \mathcal{U}_{\infty}^1 \rightarrow Gal(\mathcal{M}_p/\mathcal{L}_p).
\]
The field $\mathcal{K}_p$ is an abelian pro-$p$ extension of $\mathbb{Q}(\mu_{p^\infty})$ unramified outside $p$. 
Therefore $\mathcal{K}_p \subset \mathcal{M}_p$. Let $\mathcal{K}_p^0$ be the maximal unramified extension of $\mathbb{Q}(\mu_{p^{\infty}})$ contained in $\mathcal{K}_p$. We denote the composition of the map $[CL]$ with the natural map 
\[ 
Gal(\mathcal{M}_p/\mathbb{Q}(\mu_{p^\infty})) \rightarrow Gal(\mathcal{K}_p/\mathbb{Q}(\mu_{p^\infty}))
\]
by $[cl]$. The image of $[cl]$ is the subgroup $Gal(\mathcal{K}_p/\mathcal{K}_p^0)$ of $Gal(\mathcal{K}_p/\mathbb{Q}(\mu_{p^\infty}))$.

The next result we need, is stated without proof in \cite[on page 248]{Ihara1}.
\begin{lem}\label{lem:6.4.}  Let us assume that $p$ does not divide the class number $h(\mathbb{Q}(\mu_{p})^+)$,i.e. 
 that the Vandiver conjecture holds for a prime $p$. Then $\mathcal{K}_p=\mathcal{M}_p^-$.
\end{lem}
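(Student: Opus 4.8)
The plan is to deduce $\mathcal{K}_p = \mathcal{M}_p^-$ from the Vandiver conjecture by comparing the two abelian pro-$p$ extensions of $\Q(\mu_{p^\infty})$ via their Galois groups as $\Z_p[[\Gamma]]$-modules. First I would observe that $\mathcal{K}_p \subset \mathcal{M}_p$ is clear from the definitions, and since all the Kummer generators $(1-\xi_{p^n}^i)^{1/p^m}$ generate a field on which complex conjugation acts by $-1$ (using the identity $\kappa(1-\xi_{p^n}^{-i}) = \kappa(1-\xi_{p^n}^i)$ established in the proof of Proposition \ref{prop:3.1}, which forces the $+$-part of $Gal(\mathcal{K}_p/\Q(\mu_{p^\infty}))$ to vanish), we already know $\mathcal{K}_p \subseteq \mathcal{M}_p^-$. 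So the content is the reverse inclusion, equivalently the surjectivity statement that the natural map $Gal(\mathcal{M}_p^-/\Q(\mu_{p^\infty})) \to Gal(\mathcal{K}_p/\Q(\mu_{p^\infty}))$ is an isomorphism, i.e. that $\mathcal{M}_p^-$ contains no more than $\mathcal{K}_p$.

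The key step is to compare $Gal(\mathcal{K}_p/\Q(\mu_{p^\infty})) = Gal(\mathcal{K}_p/\Q(\mu_{p^\infty}))^-$ with $X_\infty^- := Gal(\mathcal{M}_p/\Q(\mu_{p^\infty}))^-$ as $\Z_p[[\Gamma]]$-modules. Under the Vandiver hypothesis, Proposition \ref{prop:6.3} gives that $p \nmid |\mathcal{E}_n/\mathcal{C}_n|$ for all $n$, and hence by Lemma \ref{lem:3.3} the map $z_p^- \colon Gal(\mathcal{K}_p/\Q(\mu_{p^\infty})) \to \Z_p[[\Z_p^\times]]^-$ is an isomorphism of $\Z_p[[\Gamma]]$-modules; in particular $Gal(\mathcal{K}_p/\Q(\mu_{p^\infty}))$ has no nonzero finite submodule and is, up to pseudo-isomorphism, the "full" expected size. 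On the other side, I would invoke the structure of $X_\infty$: by Iwasawa's theorem $X_\infty^-$ is a finitely generated torsion $\Z_p[[\Gamma]]$-module, and the classical reflection/Kummer-duality arguments (as in \cite[Chapter 13, Chapter 10]{Wa}) show that Vandiver's conjecture forces $X_\infty^+ = Gal(\mathcal{M}_p/\Q(\mu_{p^\infty}))^+$ to have no everywhere-unramified part in the relevant sense, while the $-$-part of the unramified quotient $Gal(\mathcal{L}_p/\Q(\mu_{p^\infty}))$ is, again under Vandiver, isomorphic to $X_\infty^-/(\text{image of cyclotomic units})$. The decisive input is the theorem of Iwasawa (cf. \cite[Theorem 13.56]{Wa}), valid precisely when Vandiver holds, identifying $Gal(\mathcal{M}_p/\mathcal{L}_p)^-$ with the $\Z_p[[\Gamma]]$-module of global units modulo cyclotomic units built from $1-\xi_{p^n}^i$; combined with the fact that $\mathcal{K}_p$ is exactly the field cut out by Kummer theory on these cyclotomic $p$-units, this gives that the sub-extension $\mathcal{K}_p \subseteq \mathcal{M}_p^-$ accounts for all of $\mathcal{M}_p^-$.

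Concretely I would argue as follows. The field $\mathcal{K}_p$ corresponds to the kernel of the pairing of $X_\infty$ with the subgroup $\varprojlim_n \mathcal{C}_n$ of cyclotomic $p$-units under Kummer duality, so $Gal(\mathcal{M}_p/\mathcal{K}_p)$ is the annihilator of $\varprojlim_n \mathcal{C}_n$. Since $\mathcal{M}_p^+ = 0$ would be too strong, I instead note that complex conjugation splits everything and it suffices to show $Gal(\mathcal{M}_p^-/\mathcal{K}_p) = 0$. By Kummer duality over $\Q(\mu_{p^\infty})$, $Gal(\mathcal{M}_p^-/\mathcal{K}_p)$ is dual to the quotient of $\varprojlim_n (\mathcal{E}_n \otimes \Z_p)$ by $\varprojlim_n (\mathcal{C}_n \otimes \Z_p)$ in the appropriate eigenspaces; under Vandiver, $p \nmid |\mathcal{E}_n/\mathcal{C}_n|$ for all $n$ by Proposition \ref{prop:6.3}, so this quotient vanishes, giving $\mathcal{K}_p = \mathcal{M}_p^-$.

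The main obstacle I anticipate is making the Kummer-duality comparison between $Gal(\mathcal{M}_p/\Q(\mu_{p^\infty}))^-$ and the inverse limit of $p$-units versus cyclotomic $p$-units completely precise at the level of $\Z_p[[\Gamma]]$-modules — in particular keeping track of the distinction between $\mathcal{L}_p$ (everywhere unramified) and $\mathcal{K}_p$ (unramified outside $p$, but with prescribed Kummer generators), handling the semi-local unit contributions at $p$ correctly, and ensuring that the finite-level equalities $p \nmid |\mathcal{E}_n/\mathcal{C}_n|$ pass cleanly to the projective limit (which is where the Bass-theorem argument already invoked in the proof of Lemma \ref{lem:3.3} reenters). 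Once that bookkeeping is in place, the statement follows formally by combining Proposition \ref{prop:6.3}, Lemma \ref{lem:3.3}, and the classical consequence of Vandiver's conjecture that $X_\infty^+$ contributes nothing unramified, so that the only abelian pro-$p$ extension of $\Q(\mu_{p^\infty})$ unramified outside $p$ in the minus part is the Kummer extension $\mathcal{K}_p$ generated by the cyclotomic units.
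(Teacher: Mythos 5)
Your proposal follows the same broad strategy as the paper (Kummer theory over $\Q(\mu_{p^\infty})$ plus the Vandiver hypothesis), and you correctly identify $\mathcal{K}_p \subseteq \mathcal{M}_p^-$ as the easy inclusion. But there is a genuine gap in the step where you assert that $\mathrm{Gal}(\mathcal{M}_p^-/\mathcal{K}_p)$ is Kummer-dual to the quotient of $\varprojlim_n(\mathcal{E}_n\otimes\Z_p)$ by $\varprojlim_n(\mathcal{C}_n\otimes\Z_p)$. The Kummer dual of $\mathrm{Gal}(\mathcal{M}_p/\Q(\mu_{p^\infty}))$ is \emph{not} just the module of $p$-units. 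An element $a$ such that $\Q(\mu_{p^n})(a^{1/p^r})/\Q(\mu_{p^n})$ is unramified outside $p$ need only satisfy $v_\fq(a)\equiv 0\pmod{p^r}$ for all $\fq\nmid p$; the ideal $(a)$ is then a $p^r$-th power of an ideal (times something supported at $p$), but that underlying ideal need not be principal, so $a$ need not differ from a $p$-unit by a $p^r$-th power. There is a class-group contribution in the Kummer dual, and identifying it with the $p$-unit module already \emph{uses} the hypothesis that the plus class number is prime to $p$.

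The paper's proof handles this explicitly: it shows that every Kummer generator $a$ of $\mathcal{M}_p^-$ over $\Q(\mu_{p^n})$ can be replaced by a $p$-unit of $\Q(\mu_{p^n})^+$, and the key move — given a non-principal $\fq\nmid p$ with $v_\fq(a)=kp^r$, take $s$ coprime to $p$ with $\fq^s=(q)$ principal (possible because $p\nmid h(\Q(\mu_{p^n})^+)$ by Vandiver plus \cite[Cor.\ 10.6]{Wa}), and then $a^s = q^{kp^r}a_1$ with $a_1\notin\fq$ and the same Kummer extension — is precisely where Vandiver is invoked a \emph{first} time, independently of its second use to get $p\nmid |\mathcal{E}_n/\mathcal{C}_n|$. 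Your write-up collapses these two applications of Vandiver into one and so never actually establishes the reduction from general Kummer generators to $p$-units. You flagged "making the Kummer-duality comparison completely precise" as an anticipated obstacle, and this is exactly the missing concrete argument; without it the proof does not go through. Once that step is supplied, the remainder (reduce to cyclotomic $p$-units via $p\nmid|\mathcal{E}_n^+/\mathcal{C}_n^+|$, then observe that the $1-\xi_{p^n}^i$ and $\xi_{p^n}$ generate the relevant $p$-units so that $\mathcal{M}_p^-\subseteq\mathcal{K}_p$) matches the paper.
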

\begin{proof} It follows from proposition 3.1,ii) that $\mathcal{K}_p\subset \mathcal{M}_p^-$. Another proof is in \cite[on page 248]{Ihara1}.
First we shall show that $\mathcal{M}_p^-$ is generated over $\mathbb{Q}(\mu_{p^\infty})$ by $p^n$th roots$\Q (\mu _{p^n})^+$ of $p$-units of 
$\mathbb{Q}(\mu_{p^\infty})^+$ for $n\in \NN$.

The extension $\mathbb{Q}(\mu_{p^\infty}) \hookrightarrow \mathcal{M}_p^-$ is pro-$p$, abelian. Hence it is generated by Kummer extensions of the form
$\mathbb{Q}(\mu_{p^n}) \hookrightarrow M=\mathbb{Q}(\mu_{p^n})(a^{1/p^r})$ for some $a \in \Z [\mu _{p^n}]$. We can assume that 
$a\in \Z [\mu _{p^n}]^+$.
 
Let $\fq$ be a prime ideal of $\Z [\mu _{p^n}]^+$ not dividing $p$. If $a \notin \fq$ then the extension $\Q (\mu _{p^n}) \hookrightarrow M$ is 
unramified over $\fq$.
Let us assume that 
$a\in \fq$. Then the extension $M$ of  $\Q (\mu _{p^n})$ is unramified over $\fq$ if and only if $a \in \fq ^{kp^r}$ for 
some $k>0$ and $a\notin \fq ^{kp^r+1}$.
If $\fq$ is a principal ideal generated by $q\in \Z [\mu _{p^n}]^+$ then $a=q^{kp^r}a_1$, $a_1\notin \fq$ and $M=\Q (\mu _{p^n})(a_1^{1/p^r})$.
So let us assume that $\fq$ is not a principal ideal. The fact that $p$ does not divide $h(\mathbb{Q}(\mu_{p})^+)$ implies that $p$ does not divide 
$h(\mathbb{Q}(\mu_{p^n})^+)$ (see \cite[corollary 10.6]{Wa}). Hence there is a positive integer $s$ coprime to $p$ 
such that $\fq^s$ is a principal ideal generated by some $q\in \Z [\mu _{p^n}]^+$. Observe that $a^s =q^{kp^r}a_1$, $a_1\notin \fq$ 
and $\Q (\mu _{p^n})(a_1^{1/p^r})=M$ by the Kummer theory. Therefore $\mathcal{M}_p^-$ is generated over $\Q(\mu _{p^\infty})$ by $p$-units of 
$\Q (\mu _{p^n})^+$ for $n\in \NN$.

It follows form \cite[Corollary 10.6 and Theorem 8.2]{Wa} and from lemma \ref{lem:6.2} that $p$ does not $\Q (\mu _{p^n})^+$divide 
$\mid \mathcal{E}_n^+/\mathcal{C}_n^+ \mid$. Hence it follows from the Kummer theory that $\mathcal{M}_p^{-}$ is generated over $\Q (\mu _{p^\infty})$
by cyclotomic $p$-units of $\Q (\mu _{p^n})^+$ for $n\in \NN$. 
The elements $1-\xi^i_{p^n}$ for $i$ between $0$ and $p^n/2$ and coprime to $p$  and $\xi _{p^n}$ generate $p$-units of $\Q (\mu _{p^n})$. 
The $p$-units of $\Q (\mu _{p^n})^+$ can be expressed by these $p$-units of $\Q (\mu _{p^n})$. Hence it follows that $\mathcal{M}_p^-\subset \mathcal{K}_p$. 
\end{proof}

\section{The main formula} 
We recall the formula from \cite[page 105]{Ihara}
\begin{equation} \label{eq:11}
\int_{\mathbb{Z}_p^{\times}} x^{m-1} d\mathfrak{A}(\widearrow{10})([cl](\epsilon))(x) = (p^{m-1}-1)L_p(m, \omega^{1-m}) CW_m^{\epsilon}, 
\end{equation}
for all odd integers $m > 1$ and all $\epsilon \in \mathcal{U}_{\infty}^1$. 

We shall rewrite the formula in terms of integrals over $\mathbb{Z}_p^{\times}$. It follows from identities in (\ref{eq:4.1}) and (\ref{eq:4.2}) 
and from corollary \ref{cor:4.2} and lemma \ref{lem:4.3} that 
\begin{equation} \label{eq:12}
(1- p^{m-1})CW_m^{\epsilon}= \int_{\mathbb{Z}_p^{\times}} x^{m-1} d\mu_{\Delta(f_{\epsilon})}^{\times}(x)
\end{equation} 
for $m \geq 1$, as by corollary \ref{cor:4.4} both sides are zero for $m=1$. 

Let $c \in \mathbb{Z}_p^{\times} - \mu_{p-1}$. We recall that $\Q (\mu _{p^n})^+$
\[
L_p(1-s , \omega^{\beta}) := 
\frac{1}{\omega^{\beta} (c) \langle c \rangle^s-1} \int_{\mathbb{Z}_p^{\times}} \langle x \rangle^s x^{-1} \omega^{\beta}(x) dE_{1,c}^{\times}(x)
\]
by definition (see \cite[Chapter 4]{Lang}). 

Let $j : \mathbb{Z}_p^{\times} \rightarrow \Z_p^{\times}$ be defined by $j(x) = x^{-1}$. Then $j$ induces $j_* : \Z_p[[\Z_p^{\times}]] \rightarrow \Z_p[[\Z_p^{\times}]]$. Let us set 
\[
\mathcal{E}_c := j_*(E_{1,c}^{\times}). 
\]
Finally we define a measure $\mathcal{F}_c$ on $\Z_p^{\times}$ by 
\[
d\mathcal{F}_c(x) := x d\mathcal{E}_c(x).
\]
It follows from the definition of the measure $\mathcal{F}_c$ that 
\begin{equation} \label{eq:13}
-L_p(m, \omega^{1-m}) = \frac{1}{1-c^{1-m}} \int_{\Z_p^{\times}} x^{-m} dE_{1,c}^{\times}(x) = \frac{1}{1-c^{1-m}} \int_{\Z_p^{\times}} x^{m-1} d \mathcal{F}_c(x)
\end{equation}
for $m\neq 1$. 

Let $c \in \Z_p^{\times}$. Let $\delta_c$ be a measure on $\Z_p^{\times}$ defined by $\int_{\Z_p^{\times}} f(x) d\delta_c(x) = f(c)$ for any continuous function $f$ on $\Z_p^{\times}$. Observe that
\begin{equation} \label{eq:14}
1 - c^{1-m} = \int_{\Z_p^{\times}} x^{m-1} d(\delta_1 - \delta_{c^{-1}})(x).
\end{equation}

\begin{lem} \label{lem:7.1} Let $m\geq 1$ and let $\epsilon \in \mathcal{U}_{\infty}$. Then 
\begin{equation} \label{eq:15}
\int_{\Z_p^{\times}} x^{m-1} d(\delta_1 - \delta_{c^{-1}})(x) \cdot \int_{\Z_p^{\times}} x^{m-1} d \mathfrak{A}(\widearrow{10})^\times ([cl](\epsilon))(x) =
\end{equation}
\[
 \int_{\Z_p^{\times}} x^{m-1} d \mathcal{F}_c(x) \cdot \int_{\Z_p^{\times}} x^{m-1} d \mu_{\Delta(f_{\epsilon})}^{\times} (x)\,.
\]
\end{lem}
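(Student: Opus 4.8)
The plan is to reduce Lemma \ref{lem:7.1} to a product of the two one–variable identities already established, namely the Ihara formula \eqref{eq:11} and the rewriting \eqref{eq:12}. Fix an odd integer $m>1$ and $\epsilon \in \mathcal{U}_{\infty}^1$ (the hypothesis $\epsilon\in\mathcal{U}_{\infty}$ in the statement together with $m\geq 1$ will be handled by the density/continuity argument at the end). By \eqref{eq:11} we have
\[
\int_{\mathbb{Z}_p^{\times}} x^{m-1}\, d\mathfrak{A}(\widearrow{10})^{\times}([cl](\epsilon))(x) = (p^{m-1}-1)\, L_p(m,\omega^{1-m})\, CW_m^{\epsilon},
\]
and by \eqref{eq:12}, $(1-p^{m-1})CW_m^{\epsilon} = \int_{\mathbb{Z}_p^{\times}} x^{m-1}\, d\mu_{\Delta(f_\epsilon)}^{\times}(x)$, so multiplying,
\[
CW_m^{\epsilon}\cdot\!\int_{\mathbb{Z}_p^{\times}}\! x^{m-1} d\mathfrak{A}(\widearrow{10})^{\times}([cl](\epsilon)) = -L_p(m,\omega^{1-m})\Bigl(\int_{\mathbb{Z}_p^{\times}}\! x^{m-1}d\mu_{\Delta(f_\epsilon)}^{\times}\Bigr)\Bigl(\int_{\mathbb{Z}_p^{\times}}\! x^{m-1}d\mathfrak{A}(\widearrow{10})^{\times}([cl](\epsilon))\Bigr).
\]
First I would substitute, on the right-hand side, the value of $-L_p(m,\omega^{1-m})$ from \eqref{eq:13}, which for $m\neq 1$ reads $-L_p(m,\omega^{1-m}) = (1-c^{1-m})^{-1}\int_{\mathbb{Z}_p^{\times}} x^{m-1}\,d\mathcal{F}_c(x)$, and on the left-hand side use \eqref{eq:11} again together with \eqref{eq:14} to express the factor $CW_m^{\epsilon}$: from \eqref{eq:11} and \eqref{eq:14},
\[
\int_{\mathbb{Z}_p^{\times}}\! x^{m-1}d(\delta_1-\delta_{c^{-1}})(x)\cdot CW_m^{\epsilon} = \frac{1-c^{1-m}}{(p^{m-1}-1)L_p(m,\omega^{1-m})}\int_{\mathbb{Z}_p^{\times}}\! x^{m-1}d\mathfrak{A}(\widearrow{10})^{\times}([cl](\epsilon)).
\]
Combining these with the displayed product identity, the two occurrences of $\int x^{m-1}\,d\mathfrak{A}(\widearrow{10})^{\times}([cl](\epsilon))$ cancel on each side, the factor $(1-c^{1-m})$ cancels against its inverse, and after using \eqref{eq:12} once more to turn $CW_m^{\epsilon}$ into $\int x^{m-1} d\mu_{\Delta(f_\epsilon)}^{\times}$ one is left precisely with
\[
\int_{\mathbb{Z}_p^{\times}}\! x^{m-1} d(\delta_1-\delta_{c^{-1}})(x)\cdot\!\int_{\mathbb{Z}_p^{\times}}\! x^{m-1} d\mathfrak{A}(\widearrow{10})^{\times}([cl](\epsilon))(x) = \int_{\mathbb{Z}_p^{\times}}\! x^{m-1} d\mathcal{F}_c(x)\cdot\!\int_{\mathbb{Z}_p^{\times}}\! x^{m-1} d\mu_{\Delta(f_\epsilon)}^{\times}(x),
\]
which is \eqref{eq:15} for odd $m>1$.

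The step that requires care, and which I expect to be the main obstacle, is the passage from ``odd $m>1$'' to ``all $m\geq 1$'' and from $\mathcal{U}_{\infty}^1$ to $\mathcal{U}_{\infty}$. For this I would argue as follows. Both sides of \eqref{eq:15} are continuous functions of $m$ on $\mathbb{Z}_p$ (each integral $\int_{\mathbb{Z}_p^{\times}} x^{m-1}\,d\nu(x)$ depends continuously, indeed analytically, on $m\in\mathbb{Z}_p$ for a fixed $\mathbb{Z}_p$-valued measure $\nu$ on $\mathbb{Z}_p^{\times}$), and the odd integers $m>1$ are dense in $\mathbb{Z}_p$; hence the identity, once known for all odd $m>1$, extends to all $m\in\mathbb{Z}_p$, in particular to all $m\geq 1$. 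For the enlargement of $\epsilon$, note that on $\mathcal{U}_{\infty}$ one has the auxiliary factor $(\mathbb{Z}/p)^{\times}$ of Lemma \ref{lem:6.1}; since the map $[cl]$ and the Coleman series $f_\epsilon$ are only sensitive to the $\mathcal{U}_{\infty}^1$-part up to the prime-to-$p$ torsion, which is killed by $CW_m$ and by $\mathfrak{A}(\widearrow{10})^{\times}\circ[cl]$, both sides of \eqref{eq:15} factor through $\mathcal{U}_{\infty}^1$; thus it suffices to check the case $\epsilon\in\mathcal{U}_{\infty}^1$ treated above.

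One remark on the bookkeeping: the cancellation above is purely formal provided the scalar $\int_{\mathbb{Z}_p^{\times}} x^{m-1}\,d\mathfrak{A}(\widearrow{10})^{\times}([cl](\epsilon))(x)$ and the factor $1-c^{1-m}$ are nonzero, which holds for all but finitely many odd $m>1$ (the former, e.g., because $L_p(m,\omega^{1-m})\neq 0$ by the non-vanishing of the $p$-adic $L$-function at those $m$, and $CW_m^{\epsilon}\neq 0$ for suitable $\epsilon$; the latter for $c$ not a root of unity and $m\neq 1$). Dividing is therefore legitimate on a dense set of $m$, and continuity in $m$ again propagates the resulting identity everywhere, so no genuine division is needed in the final statement. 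Alternatively, and more cleanly, I would avoid dividing altogether: clear all denominators first, so that the claim becomes an equality between two polynomial combinations of the four basic integrals, and then verify it by substituting \eqref{eq:11}, \eqref{eq:12}, \eqref{eq:13}, \eqref{eq:14} directly and checking that both sides reduce to the same expression $-(1-c^{1-m})\,L_p(m,\omega^{1-m})\,CW_m^{\epsilon}\cdot\int_{\mathbb{Z}_p^{\times}} x^{m-1}\,d\mathfrak{A}(\widearrow{10})^{\times}([cl](\epsilon))(x)$ times $(p^{m-1}-1)^{-1}$ absorbed via \eqref{eq:12}; this keeps everything in the polynomial ring and sidesteps the nonvanishing discussion entirely.
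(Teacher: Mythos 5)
Your computation for odd $m>1$ (especially the ``cleaner alternative'' you sketch at the end, which avoids any division) is essentially identical to the paper's: substituting \eqref{eq:11}, \eqref{eq:12}, \eqref{eq:13}, \eqref{eq:14}, both sides of \eqref{eq:15} reduce to $(1-c^{1-m})(p^{m-1}-1)\,L_p(m,\omega^{1-m})\,CW_m^{\epsilon}$.

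The gap is in the passage to $m=1$ and to even $m$. You assert that $m \mapsto \int_{\Z_p^{\times}} x^{m-1}\,d\nu(x)$ is continuous (even analytic) on $\Z_p$ and invoke density of the odd integers $m>1$. This continuity claim is false. Writing $x=\omega(x)\langle x\rangle$ for $x\in\Z_p^{\times}$, the factor $\omega(x)^{m-1}$ is a $(p-1)$st root of unity which depends only on $m-1 \bmod (p-1)$ and does not interpolate $p$-adically; only $\langle x\rangle^{m-1}$ is an analytic function of $m$. Thus $\int x^{m-1}\,d\nu$ is analytic in $m$ only on each fixed residue class modulo $p-1$. Since $p-1$ is even, the odd integers $m>1$ occupy exactly the odd classes mod $p-1$, and their $p$-adic closure does not contain a single even integer. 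Your density argument therefore cannot reach any even $m$, and even the case $m=1$ would require the residue-class refinement that you do not carry out. The same objection applies to your remark that ``continuity in $m$ again propagates the resulting identity everywhere'': no propagation across parity is available.

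The paper closes the missing cases by a direct parity computation rather than by interpolation. For $m=1$ both sides of \eqref{eq:15} vanish: on the left $\int_{\Z_p^{\times}} d(\delta_1-\delta_{c^{-1}})=0$, and on the right $\int_{\Z_p^{\times}} d\mu_{\Delta(f_\epsilon)}^{\times}=0$ by Corollary \ref{cor:4.4}. For even $m>0$ both sides again vanish, but for two different reasons: on the right $L_p(m,\omega^{1-m})=0$ because $\omega^{1-m}$ is then an odd character, whence by \eqref{eq:13} $\int x^{m-1}\,d\mathcal{F}_c=0$; on the left $\int x^{m-1}\,d\mathfrak{A}(\widearrow{10})^{\times}([cl](\epsilon))=0$ because, by Proposition \ref{prop:3.1}\,(ii), the measure $\mathfrak{A}(\widearrow{10})^{\times}([cl](\epsilon))$ lies in $\Z_p[[\Z_p^{\times}]]^{-}$ and therefore its even moments vanish. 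You need to supply these parity arguments in place of the interpolation step.
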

\begin{proof} It follows from (\ref{eq:12}), (\ref{eq:13}) and (\ref{eq:14}) that the formula (\ref{eq:11}) can be written in the form
(\ref{eq:15})  for $m > 1$ and  odd. 
Observe that the formula (\ref{eq:15}) holds also for $m =1$ as then both sides vanish. 
Notice that the left hand side of the formula (\ref{eq:15}) vanishes for $m > 0$ and even. On the other side $L_p(m, \omega^{1-m}) = 0$ for $m$ 
even as then $1-m$ is odd. Hence equation (\ref{eq:13}) implies that the right hand side also vanishes for $m > 0$ and $m$ even.  
\end{proof}

\begin{prop} \label{prop:7.2} Let $\epsilon \in \mathcal{U}_{\infty}$ and let $c \in \Z_p^{\times} - \mu_{p-1}$. Then 
\begin{equation} \label{eq:16}
(\delta_1 - \delta_{c^{-1}}) \cdot (\mathfrak{A}(\widearrow{10})^{\times}([cl](\epsilon))) = \mathcal{F}_c \cdot \mu_{\Delta(f_{\epsilon})}^{\times}
\end{equation}
in $\Z_p[[\Z_p^{\times}]]$.
\end{prop}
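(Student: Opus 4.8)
The plan is to deduce~(\ref{eq:16}) directly from Lemma~\ref{lem:7.1} by means of the injective multiplicative embedding $\Phi$ of Lemma~\ref{lem:3.1}. First one should observe that all four objects $\delta_1-\delta_{c^{-1}}$, $\mathfrak{A}(\widearrow{10})^{\times}([cl](\epsilon))$, $\mathcal{F}_c$ and $\mu_{\Delta(f_{\epsilon})}^{\times}$ are genuine measures on $\Z_p^{\times}$: the first is a difference of group-like elements of $\Z_p[[\Z_p^{\times}]]$; $\mathcal{F}_c$ is obtained from the measure $E_{1,c}^{\times}$ by pushforward along the homeomorphism $j$ and multiplication by the unit $x$, hence is again a measure; and the remaining two are measures by construction. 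Consequently both sides of~(\ref{eq:16}) lie in $\Z_p[[\Z_p^{\times}]]$, the convolution products make sense, and it suffices to show that the two sides have the same image under $\Phi$.

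The main step is then to apply $\Phi$ to both sides of~(\ref{eq:16}) and use the multiplicativity $\Phi(\mu\cdot\nu)=\Phi(\mu)\Phi(\nu)$ from Lemma~\ref{lem:3.1}. Since the ring structure on $\prod_{k\geq 1}\Z_p(k)$ is coordinatewise, the claimed equality is equivalent, coordinate by coordinate, to
\begin{align*}
&\Big(\int_{\Z_p^{\times}}x^{k-1}\,d(\delta_1-\delta_{c^{-1}})\Big)\Big(\int_{\Z_p^{\times}}x^{k-1}\,d\mathfrak{A}(\widearrow{10})^{\times}([cl](\epsilon))\Big)\\
&\qquad=\Big(\int_{\Z_p^{\times}}x^{k-1}\,d\mathcal{F}_c\Big)\Big(\int_{\Z_p^{\times}}x^{k-1}\,d\mu_{\Delta(f_{\epsilon})}^{\times}\Big)
\end{align*}
for every $k\geq 1$. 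This is exactly Lemma~\ref{lem:7.1} with $m=k$. Here it is essential that Lemma~\ref{lem:7.1} is stated for \emph{all} $m\geq 1$ rather than only for odd $m>1$ as in Ihara's original formula~(\ref{eq:11}): the cases $k=1$ and $k$ even are already covered there, both sides vanishing. Therefore $\Phi$ of the two sides of~(\ref{eq:16}) coincide in $\prod_{k\geq 1}\Z_p(k)$, and since $\Phi$ is injective by Lemma~\ref{lem:3.1} this forces~(\ref{eq:16}) in $\Z_p[[\Z_p^{\times}]]$.

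I do not expect a serious obstacle in this argument; it is essentially a repackaging of Lemma~\ref{lem:7.1} through the isomorphism-onto-its-image $\Phi$. The only points that genuinely need attention are the two invoked at the start: that each of the four factors is an honest measure (so that no pseudo-measure or total-ring-of-fractions subtlety enters and $\Phi$ may be applied directly), and that the validity of Lemma~\ref{lem:7.1} in the full range $m\geq 1$ is what lets one recover \emph{every} coordinate of $\Phi$.
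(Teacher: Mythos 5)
Your argument is exactly the paper's: the paper proves Proposition~\ref{prop:7.2} by saying it ``follows immediately from lemma~\ref{lem:7.1} and lemma~\ref{lem:3.1},'' i.e.\ apply the multiplicative injection $\Phi$, match coordinates via Lemma~\ref{lem:7.1}, and invoke injectivity. Your writeup merely spells out the same reasoning and adds the (correct, and worth noting) sanity check that all four factors are honest measures so the convolution identity makes sense in $\Z_p[[\Z_p^{\times}]]$.
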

\begin{proof} The proposition follows immediately from lemma \ref{lem:7.1} and lemma \ref{lem:3.1}.
\end{proof}

Let $S$ be the set of all non-zero divisors in the ring $\Z_p[[\Z_p^{\times}]]$. We set 
\[
\mathfrak{F} : = S^{-1} \Z_p[[Z_p^{\times}]]
\]
for the total ring of fractions. 

\begin{cor} \label{cor:7.3} Let $c \in \Z_p^{\times} - \mu_{p-1}$ and let $\epsilon \in \mathcal{U}_{\infty}^1$. 
Then $[cl](\epsilon) = 0$ in $Gal(\mathcal{K}_p/ \mathbb{Q}(\mu_{p^{\infty}}))$ if and only if 
$(\delta_1 - \delta_{c^{-1}})^{-1} \cdot \mathcal{F}_c \cdot \mu_{\Delta(f_{\epsilon})}^{\times} = 0$ in $\mathfrak{F}$.
\end{cor}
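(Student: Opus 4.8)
The plan is to cancel the factor $\delta_1-\delta_{c^{-1}}$ from the identity of Proposition~\ref{prop:7.2} and then to read off the vanishing of $[cl](\epsilon)$ from the injectivity of $z_p$. The one point that genuinely needs an argument is that $\delta_1-\delta_{c^{-1}}$ is a non-zero divisor of $\Z_p[[\Z_p^{\times}]]$, hence a unit of $\mathfrak{F}$. Since $p$ is odd, the torsion subgroup of $\Z_p^{\times}$ is exactly $\mu_{p-1}$, so the hypothesis $c\notin\mu_{p-1}$ means that $c$, and therefore $c^{-1}$, has infinite order. Writing $\Z_p^{\times}=\mu_{p-1}\times(1+p\Z_p)$ and using the idempotents attached to the characters of $\mu_{p-1}$ (all of which take values in $\Z_p$), one identifies $\Z_p[[\Z_p^{\times}]]$ with a finite product $\prod_{j=0}^{p-2}\Z_p[[T]]$ of power series rings, under which $[c^{-1}]$ becomes $\omega^{j}(c)^{-1}(1+T)^{t}$ in the $j$-th factor for one nonzero $t\in\Z_p$ (the image of $\langle c^{-1}\rangle$ under $1+p\Z_p\xrightarrow{\sim}\Z_p$). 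In each factor $1-\omega^{j}(c)^{-1}(1+T)^{t}$ is a nonzero element of the domain $\Z_p[[T]]$: its constant term is $1-\omega^{j}(c)^{-1}$, and whenever that vanishes its linear coefficient equals $-t\neq0$. Hence $\delta_1-\delta_{c^{-1}}\in S$.

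Granting this, Proposition~\ref{prop:7.2} --- in which $\mathfrak{A}(\widearrow{10})^{\times}([cl](\epsilon))$ is, by the definition of $z_p$, the same as $z_p([cl](\epsilon))$ --- gives
\[
(\delta_1-\delta_{c^{-1}})\cdot z_p([cl](\epsilon))=\mathcal{F}_c\cdot\mu_{\Delta(f_{\epsilon})}^{\times}
\]
in $\Z_p[[\Z_p^{\times}]]\subseteq\mathfrak{F}$. Because $\delta_1-\delta_{c^{-1}}$ is a non-zero divisor, the right-hand side vanishes (in $\Z_p[[\Z_p^{\times}]]$) if and only if $z_p([cl](\epsilon))=0$; and, since $z_p$ is injective on $Gal(\mathcal{K}_p/\Q(\mu_{p^{\infty}}))$ by Proposition~\ref{prop:3.1}(i), this is in turn equivalent to $[cl](\epsilon)=0$ in $Gal(\mathcal{K}_p/\Q(\mu_{p^{\infty}}))$. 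Finally, since $\Z_p[[\Z_p^{\times}]]$ embeds in $\mathfrak{F}$ and $\delta_1-\delta_{c^{-1}}$ is a unit there, the vanishing of $\mathcal{F}_c\cdot\mu_{\Delta(f_{\epsilon})}^{\times}$ is equivalent to $(\delta_1-\delta_{c^{-1}})^{-1}\cdot\mathcal{F}_c\cdot\mu_{\Delta(f_{\epsilon})}^{\times}=0$ in $\mathfrak{F}$. Concatenating these equivalences yields the corollary.

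I expect the only non-formal step to be the first one, namely the verification that $\delta_1-\delta_{c^{-1}}$ is a non-zero divisor; it rests on the standard structural fact that $\Z_p[[\Z_p^{\times}]]$ is a finite product of power series rings over $\Z_p$ and that $[1]-[g]$ is a non-zero divisor there as soon as $g\in\Z_p^{\times}$ has infinite order. Everything afterwards is a formal rearrangement of Proposition~\ref{prop:7.2} together with the injectivity statement of Proposition~\ref{prop:3.1}(i). I would also note that passing to $\mathfrak{F}$ is not strictly needed for the equivalence --- it could already be stated inside $\Z_p[[\Z_p^{\times}]]$ --- but phrasing it in $\mathfrak{F}$ is convenient for the subsequent deduction of the main conjecture.
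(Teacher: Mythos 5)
Your proof takes essentially the same route as the paper: rearrange Proposition~\ref{prop:7.2} using the fact that $\delta_1-\delta_{c^{-1}}$ is a non-zero divisor, then conclude from the injectivity of $z_p$ in Proposition~\ref{prop:3.1}(i). The only difference is that the paper simply cites \cite[Lemma 4.2.2]{CS} for the non-zero-divisor fact, whereas you supply a correct elementary verification of it via the decomposition of $\Z_p[[\Z_p^{\times}]]$ into a product of power series rings.
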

\begin{proof} Following \cite[lemma 4.2.2]{CS} the element $\delta_1 - \delta_{c^{-1}}$ is not a zero divisor in $\Z_p[[\Z_p^{\times}]]$. Hence 
\[
\mathfrak{A}(\widearrow{10})^\times ([cl](\epsilon)) = (\delta_1 - \delta_{c^{-1}})^{-1}\cdot \mathcal{F}_c \cdot \mu_{\Delta(f_{\epsilon})}^{\times}
\]
in $\mathfrak{F}$ by proposition \ref{prop:7.2}. The corollary follows from the fact that the map 
\[
z_p : Gal(\mathcal{K}_p/\mathbb{Q}(\mu_{p^{\infty}})) \rightarrow \Z_p[[\Z_p^{\times}]] 
\]
is injective by proposition \ref{prop:3.1} (i).
\end{proof}
\begin{lem} \label{lem:7.4}
 Let $c,c_1 \in \Z_p ^{\times}-\mu_{p-1}$. Then 
 \[
  (\delta_1 - \delta_ {c^{-1}} )^{-1} \cdot \mathcal{F}_c=(\delta_1 - \delta_{c_1^{-1}})^{-1} \cdot \mathcal{F}_{c_1}
 \]
in the ring $\mathfrak{F}$.
\end{lem}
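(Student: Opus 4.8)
The plan is to reduce the asserted equality to an identity inside $\Z_p[[\Z_p^\times]]$ and then to verify that identity by applying the injective multiplicative map $\Phi$ of Lemma \ref{lem:3.1}. First, recall from \cite[lemma 4.2.2]{CS} that $\delta_1 - \delta_{c^{-1}}$ and $\delta_1 - \delta_{c_1^{-1}}$ are non-zero-divisors in $\Z_p[[\Z_p^\times]]$, so their product is a non-zero-divisor as well. Multiplying the asserted identity through by $(\delta_1 - \delta_{c^{-1}})(\delta_1 - \delta_{c_1^{-1}})$ shows that it is equivalent to the identity
\[
(\delta_1 - \delta_{c_1^{-1}}) \cdot \mathcal{F}_c = (\delta_1 - \delta_{c^{-1}}) \cdot \mathcal{F}_{c_1},
\]
both sides of which already lie in $\Z_p[[\Z_p^\times]]$.

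Next I would apply $\Phi \colon \Z_p[[\Z_p^\times]] \to \prod_{k \geq 1} \Z_p(k)$. Since $\Phi$ is injective and multiplicative by Lemma \ref{lem:3.1}, it is enough to check that the two sides have the same $k$-th component $\int_{\Z_p^\times} x^{k-1} d(-)(x)$ for every $k \geq 1$. By (\ref{eq:14}), for $a \in \Z_p^\times - \mu_{p-1}$ the $k$-th component of $\Phi(\delta_1 - \delta_{a^{-1}})$ is $1 - a^{1-k}$; by (\ref{eq:13}), for $k \geq 2$ the $k$-th component of $\Phi(\mathcal{F}_a)$ is $-(1 - a^{1-k}) L_p(k, \omega^{1-k})$. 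Hence, by multiplicativity of $\Phi$, for $k \geq 2$ the $k$-th component of $\Phi\bigl((\delta_1 - \delta_{c_1^{-1}}) \mathcal{F}_c\bigr)$ equals
\[
-(1 - c^{1-k})(1 - c_1^{1-k}) L_p(k, \omega^{1-k}),
\]
which is symmetric in $c$ and $c_1$ and therefore coincides with the $k$-th component of $\Phi\bigl((\delta_1 - \delta_{c^{-1}}) \mathcal{F}_{c_1}\bigr)$.

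The remaining case $k = 1$, where (\ref{eq:13}) does not apply, is immediate: the first component of $\Phi(\delta_1 - \delta_{a^{-1}})$ is $1 - a^{0} = 0$ for $a = c$ and for $a = c_1$, so both products have vanishing first component. Thus $\Phi$ takes the same value on the two sides, and injectivity of $\Phi$ gives the desired identity in $\Z_p[[\Z_p^\times]]$, hence in $\mathfrak{F}$. I expect no genuine obstacle here --- the content is just the independence of the Kubota--Leopoldt pseudo-measure from the auxiliary choice of $c$ --- and the only point requiring care is isolating the $k = 1$ term, which is handled automatically as above.
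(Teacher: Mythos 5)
Your proposal is correct and follows essentially the same route as the paper: both reduce the claim to the identity $(\delta_1 - \delta_{c_1^{-1}})\mathcal{F}_c = (\delta_1 - \delta_{c^{-1}})\mathcal{F}_{c_1}$ in $\Z_p[[\Z_p^\times]]$, verify it componentwise via the moments in formula (\ref{eq:13}) for $m>1$ and the trivial vanishing at $m=1$, and then invoke the injectivity/multiplicativity of $\Phi$ from Lemma \ref{lem:3.1} together with the non-zero-divisor property of $\delta_1 - \delta_{a^{-1}}$. Your version merely spells out the multiplicativity step of $\Phi$ a bit more explicitly than the paper does.
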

\begin{proof}
 It follows from the formula (\ref{eq:13}) that
 \begin{equation} \label{eq:16a}
  (1-c_1^{1-m})\int _{\Z_p^\times} x^{m-1}d\mathcal{F}_c (x)=(1-c^{1-m})\int _{\Z_p^\times} x^{m-1}d\mathcal{F}_{c_1} (x)
 \end{equation}
for $m>1$. For $m=1$ both sides of the equality (\ref{eq:16a}) vanish. Hence it follows from lemma \ref{lem:3.1} that
\[
 (\delta_1 - \delta_{c_1^{-1}})  \mathcal{F}_c=(\delta_1 - \delta_{c^{-1}}) \mathcal{F}_{c_1}
\]
in $\Z_p[[\Z_p^\times ]]$. Therefore the lemma follows as $\delta_1 - \delta_{c_1^{-1}}$ and $\delta_1 - \delta_{c^{-1}}$ are not zero divisors in 
$\mathfrak{F}$. 
\end{proof}
Let $f\in \Z _p[[\Z_p^\times ]]$. We denote by $(f)$ an ideal of $\Z _p[[\Z_p^\times ]]$ generated by $f$.
\begin{lem} \label{lem:7.5}
 Let $c\in \Z_p ^{\times}-\mu_{p-1}$.  
 Then $\mathcal{F}_c\in \Z_p[[\Z_p^\times ]]^-$ and $(\mathcal{F}_c)\subset \Z_p[[\Z_p^\times ]]^-$.
\end{lem}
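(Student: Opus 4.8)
The plan is to characterise $\Z_p[[\Z_p^\times]]^-$ through the multiplicative embedding $\Phi$ of Lemma~\ref{lem:3.1} and then read off the required vanishing from the main formula (\ref{eq:13}), using that $p$-adic $L$-functions vanish at odd characters.

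First I would observe that, since $\Phi\colon \Z_p[[\Z_p^\times]]\to \prod_{k\geq 1}\Z_p(k)$ is an injective morphism of $\Z_p[[\Gamma]]$-modules, a measure $\mu$ lies in $\Z_p[[\Z_p^\times]]^-$ if and only if $\Phi(\mu)\in\bigl(\prod_k\Z_p(k)\bigr)^- = \prod_k\Z_p(2k-1)$; that is, if and only if the even-indexed coordinates of $\Phi(\mu)$ vanish, $\int_{\Z_p^\times} x^{m-1}\,d\mu(x)=0$ for every even $m\geq 2$. (The ``only if'' is clear from $\Gamma$-equivariance; the ``if'' follows by writing $\mu=\mu^++\mu^-$ and using injectivity of $\Phi$ together with $\bigl(\prod_k\Z_p(k)\bigr)^+\cap\bigl(\prod_k\Z_p(k)\bigr)^-=0$.) Applying this to $\mu=\mathcal{F}_c$: for even $m\geq 2$ one has $1-c^{1-m}\neq 0$ because $c\notin\mu_{p-1}$, so clearing the denominator in (\ref{eq:13}) gives $\int_{\Z_p^\times} x^{m-1}\,d\mathcal{F}_c(x) = -(1-c^{1-m})L_p(m,\omega^{1-m})$; and since $1-m$ is odd, $\omega^{1-m}$ is an odd character, hence $L_p(s,\omega^{1-m})\equiv 0$ (as already used in the proof of Lemma~\ref{lem:7.1}). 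Thus every even-indexed coordinate of $\Phi(\mathcal{F}_c)$ vanishes and $\mathcal{F}_c\in\Z_p[[\Z_p^\times]]^-$.

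For the assertion about the ideal $(\mathcal{F}_c)$, the point is simply that $\Z_p[[\Z_p^\times]]^-$ is itself an ideal of $\Z_p[[\Z_p^\times]]$. Writing $[-1]$ for the group element of $-1\in\Z_p^\times$, the definition (\ref{eq:1.3}) shows that $-1\in\Gamma$ acts on $\Z_p[[\Z_p^\times]]$ by $\mu\mapsto -[-1]\cdot\mu$, so $\Z_p[[\Z_p^\times]]^- = \{\mu : [-1]\cdot\mu=\mu\}$; if $[-1]\cdot\mu=\mu$ then $[-1]\cdot(\mu\nu)=([-1]\cdot\mu)\nu=\mu\nu$ for every $\nu$, so $\mu\nu\in\Z_p[[\Z_p^\times]]^-$. (Equivalently, one can argue through $\Phi$, which is multiplicative by Lemma~\ref{lem:3.1}: an element of $\prod_k\Z_p(k)$ whose even coordinates vanish retains this property after multiplication by anything.) Combining with the first part yields $(\mathcal{F}_c)\subset\Z_p[[\Z_p^\times]]^-$. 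I do not expect a genuine obstacle here; the only delicate bookkeeping is the index shift in $\Phi$ — the twist $\Z_p(k)$ pairs with the moment $\int x^{k-1}$ — so that ``$L_p$ vanishes at $\omega^{1-m}$ for even $m$'' is correctly translated into ``the even-indexed coordinates of $\Phi(\mathcal{F}_c)$ vanish,'' i.e.\ into membership in the minus part.
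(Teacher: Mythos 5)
Your proof is correct and takes essentially the same route as the paper: both check that the even-indexed moments $\int_{\Z_p^\times} x^{m-1}\,d\mathcal{F}_c(x)$ vanish via formula (\ref{eq:13}) together with the vanishing of $L_p(m,\omega^{1-m})$ for even $m$, and then invoke the injective multiplicative map $\Phi$ of Lemma \ref{lem:3.1}. Your extra observation that $\Z_p[[\Z_p^\times]]^-$ is itself an ideal (since $-1\in\Gamma$ acts by $\mu\mapsto -[-1]\cdot\mu$, so the minus part is the fixed set of multiplication by $[-1]$) spells out what the paper leaves implicit for the containment $(\mathcal{F}_c)\subset\Z_p[[\Z_p^\times]]^-$.
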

\begin{proof}
 Observe that $L_p(m,\omega ^{1-m})=0$ if $m$ is even as then $1-m$ is odd. Hence it follows from the formula (\ref{eq:13}) that 
 \[
  \int _{\Z_p^\times} x^{m-1}d\mathcal{F}_c (x)=-(1-c^{1-m})L_p(m,\omega ^{1-m})=0
 \]
for $m$ even. Hence it follows that $\Phi (\mathcal{F}_c)\in\big(  \prod_{k=1}^{\infty} \Z_p(k)\big)^{-}$.
Therefore  it follows from  lemma \ref{lem:3.1} that $\mathcal{F}_c\in \Z_p[[Z_p^{\times}]]^-$ 
and also that the ideal $(\mathcal{F}_c )\subset \Z_p[[Z_p^{\times}]]^-$.
\end{proof}

\begin{df} Let $c\in \Z_p^{\times} -\mu _{p-1}$. We set
\[
 \zeta _p:=-(\delta _1-\delta _{c^{-1}})^{-1}\cdot \mathcal{F}_c\,.
\]
\end{df}
It follows from lemma \ref{lem:7.4} that the element $\zeta _p \in \mathfrak{F}$ does not depend on a choice of $c\in \Z_p- \mu _{p-1}$. 
Moreover it follows from \cite[Lemma 4.2.4.]{CS}
that $\zeta _p$ is a pseudo-measure.

\begin{remark} Let $\tilde \zeta _p$ be a pseudo-measure on $\Z_p^\times $ considered in \cite[Proposition 4.2.4.]{CS}. Then the relation between $\zeta _p$ defined in this paper and $\tilde \zeta _p$ is given by $j_*(\tilde \zeta _p)=\zeta _p$.
\end{remark}

The augmentation ideal of $\Z_p[[\Z_p^\times]]$ is the ideal
\[
 I(\Z_p^\times ):=\{\nu \in  \Z_p[[\Z_p^\times ]] \mid \int _{\Z_p^\times }d\nu (x)=0\}\,.
\]

\begin{df} Let $c\in \Z_p^{\times} -\mu _{p-1}$ be such that its class modulo $p^2$ generates $(\Z /p^2\Z )^\times$. We set
\[
 (\zeta _p):=(\mathcal{F}_c)\,.
\]
\end{df}

\begin{lem} \label{lem:7.8}
 The ideal $(\zeta _p)$ is well defined and it is equal $I(\Z_p^\times )\zeta _p$.
\end{lem}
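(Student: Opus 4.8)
The plan is to show two things: first, that the ideal $(\zeta_p)$ as defined (namely $(\mathcal{F}_c)$ for a specific choice of $c$) does not depend on the choice of $c$ whose reduction mod $p^2$ generates $(\Z/p^2\Z)^\times$; and second, that this ideal coincides with $I(\Z_p^\times)\zeta_p$. The key technical input is Lemma \ref{lem:7.4}, which says $(\delta_1 - \delta_{c^{-1}})^{-1}\mathcal{F}_c$ is independent of $c \in \Z_p^\times - \mu_{p-1}$; together with the definition $\zeta_p = -(\delta_1 - \delta_{c^{-1}})^{-1}\mathcal{F}_c$, this immediately gives $\mathcal{F}_c = -(\delta_1 - \delta_{c^{-1}})\zeta_p$ in $\mathfrak{F}$ for every admissible $c$. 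So $(\mathcal{F}_c) = (\delta_1 - \delta_{c^{-1}})\cdot(\zeta_p)$ as ideals of $\Z_p[[\Z_p^\times]]$, provided one checks $(\delta_1 - \delta_{c^{-1}})\zeta_p$ actually lies in $\Z_p[[\Z_p^\times]]$ — but that is exactly $\mathcal{F}_c$, which is in $\Z_p[[\Z_p^\times]]$ by construction (indeed in the minus part, by Lemma \ref{lem:7.5}).

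The heart of the matter is therefore to identify the principal ideal $(\delta_1 - \delta_{c^{-1}})$ inside $\Z_p[[\Z_p^\times]]$, when $c$ reduces to a generator of $(\Z/p^2\Z)^\times$, with the augmentation ideal $I(\Z_p^\times)$. First I would note $\delta_1 - \delta_{c^{-1}} \in I(\Z_p^\times)$ since $\int d(\delta_1 - \delta_{c^{-1}}) = 1 - 1 = 0$. For the reverse inclusion I would use the structure of $\Z_p[[\Z_p^\times]] \cong \Z_p[(\Z/p\Z)^\times][[\Z_p(1+p\Z_p)]]$; writing $\Z_p^\times \cong \mu_{p-1}\times (1+p\Z_p)$ and choosing $c$ so that its image generates $(\Z/p^2\Z)^\times$ means the image of $c$ generates $\mu_{p-1}$ topologically-times-a-generator-of-$1+p\Z_p$ — more precisely $c$ generates $\Z_p^\times$ as a profinite group. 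The standard fact (cf. \cite[Lemma 4.2.4]{CS} and the surrounding discussion, or \cite[Chapter 4]{Lang}) is that for a topological generator $\gamma$ of $\Z_p^\times$ — or here just a $c$ generating modulo $p^2$, which suffices since $1+p\Z_p$ is pro-$p$-cyclic — the element $[1] - [c^{-1}]$ generates the augmentation ideal. I would prove this by the usual telescoping: any $[g] - [1]$ with $g \in \langle c\rangle$ is a $\Z_p[[\Z_p^\times]]$-multiple of $[c]-[1]$ (via $[c^k]-[1] = ([c]-[1])(1 + [c] + \dots + [c^{k-1}])$ and passing to the limit), and $I(\Z_p^\times)$ is topologically generated by the $[g]-[1]$.

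The main obstacle I anticipate is the verification that $c$ generating $(\Z/p^2\Z)^\times$ is genuinely enough to make $c$ a topological generator of $\Z_p^\times$ (as opposed to merely mod $p^2$) — this rests on the fact that the reduction map $1 + p\Z_p \to 1 + p\Z_p/1+p^2\Z_p$ detects topological generators of the pro-$p$ group $1+p\Z_p$ (for $p$ odd it is procyclic, and an element is a generator iff it is a generator mod $p^2$), combined with the analogous elementary statement for the cyclic group $\mu_{p-1} \cong (\Z/p\Z)^\times$. Once that is in hand, well-definedness of $(\zeta_p)$ is automatic: for any two admissible $c, c_1$ we get $(\mathcal{F}_c) = I(\Z_p^\times)\zeta_p = (\mathcal{F}_{c_1})$. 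I would close by assembling: $(\zeta_p) := (\mathcal{F}_c) = (\delta_1 - \delta_{c^{-1}})(\zeta_p) = I(\Z_p^\times)\,\zeta_p$, which is the claim.
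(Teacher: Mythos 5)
Your argument is correct and is essentially the paper's: the paper's entire proof is to cite \cite[Lemma 4.2.5]{CS}, which asserts exactly the identity $(\delta_1 - \delta_{c^{-1}})\,\Z_p[[\Z_p^\times]] = I(\Z_p^\times)$ for $c$ generating $(\Z/p^2\Z)^\times$, and the rest is the bookkeeping you carry out via Lemma~\ref{lem:7.4} and the definition of $\zeta_p$. The only place to tighten is your telescoping sketch of that cited fact: the partial sums $1 + [c] + \cdots + [c^{k-1}]$ do not converge as $k\to\infty$, so a clean version should instead pass to the character decomposition $\Z_p[[\Z_p^\times]] \cong \prod_{i \bmod p-1}\Z_p[[T]]$ and check directly that $[c]-1$ projects to a unit on each nontrivial component and to a generator of $(T)$ on the trivial one, which is exactly where the hypothesis that $c$ generates $(\Z/p^2\Z)^\times$ is used.
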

\begin{proof}
The lemma  follows from \cite[Lemma 4.2.5.]{CS}.
\end{proof}

\section{Proof of the  main conjecture assuming the Vandiver conjecture}

Let 
\[
 \mathcal{C}:\mathcal{U}_{\infty} \rightarrow  \Z_p[[\Z_p^\times ]]
\]
be a map defined by $ \mathcal{C}(\epsilon ):=\mu ^\times _{\Delta (f_{\epsilon})}$.
\begin{lem}  \label{lem:8.1}
 The image of the map $ \mathcal{C}:\mathcal{U}_{\infty} \rightarrow  \Z_p[[\Z_p^\times ]]$ is equal $I(\Z_p^\times )$.
\end{lem}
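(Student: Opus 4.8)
The plan is to factor $\mathcal C$ through the Coleman map $[Col]$ and reduce the statement to a surjectivity property of the operator $1-\varphi$ on $\Z_p[[T]]^{\psi=\mathrm{id}}$. Concretely, $\mathcal C=\mathcal R\circ\mathcal P^{-1}\circ[Col]$, where $\mathcal R\colon\Z_p[[\Z_p]]\to\Z_p[[\Z_p^\times]]$ is restriction of measures, since $\mathcal C(\epsilon)=\mu^\times_{\Delta(f_\epsilon)}=\mathcal R(\mathcal P^{-1}(\Delta(f_\epsilon)))$ and $\Delta(f_\epsilon)=[Col](\epsilon)$. As $[Col]$ is surjective onto $\Z_p[[T]]^{\psi=\mathrm{id}}$ by Corollary~\ref{cor:4.2}, we get $\operatorname{im}\mathcal C=\{\mu_f^\times : f\in\Z_p[[T]]^{\psi=\mathrm{id}}\}$. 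Viewing $\Z_p[[\Z_p^\times]]$ inside $\Z_p[[\Z_p]]$ by extending measures by zero on $p\Z_p$, the computation in the proof of Lemma~\ref{lem:4.3} gives $\mathcal P(\mu^\times)=\mathcal P(\mu)-(\varphi\circ\psi)(\mathcal P(\mu))$ for every measure $\mu$ on $\Z_p$; since $\psi\circ\varphi=\mathrm{id}$ (from the defining identity for $\psi$ and injectivity of $\varphi$), the operator $1-\varphi\psi$ is the projector of $\Z_p[[T]]$ onto $\ker\psi$, so $\mathcal P$ identifies $\Z_p[[\Z_p^\times]]$ with $\ker\psi$ and the augmentation ideal $I(\Z_p^\times)$ with $\{g\in\ker\psi : g(0)=0\}$ (the augmentation being evaluation at $T=0$). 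For $f\in\Z_p[[T]]^{\psi=\mathrm{id}}$, Lemma~\ref{lem:4.3}(i) reads $\mathcal P(\mu_f^\times)=(1-\varphi)(f)$. Thus the lemma is equivalent to
\[
(1-\varphi)\big(\Z_p[[T]]^{\psi=\mathrm{id}}\big)=\{\,g\in\Z_p[[T]] : \psi(g)=0,\ g(0)=0\,\}.
\]

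The inclusion $\subseteq$ is immediate: for $f\in\Z_p[[T]]^{\psi=\mathrm{id}}$ one has $\psi((1-\varphi)f)=\psi(f)-\psi\varphi(f)=0$ and $((1-\varphi)f)(0)=f(0)-f(0)=0$ (equivalently, this is Corollary~\ref{cor:4.4}), giving $\operatorname{im}\mathcal C\subseteq I(\Z_p^\times)$. For the reverse inclusion, given $g$ with $\psi(g)=0$ and $g(0)=0$ I would set $f:=\sum_{n\ge0}\varphi^n(g)$; granting convergence, telescoping gives $(1-\varphi)f=g$, while $\psi(f)=\sum_{n\ge1}\psi\varphi^n(g)=\sum_{n\ge1}\varphi^{n-1}(g)=f$ using $\psi\varphi=\mathrm{id}$ and $\psi(g)=0$, so $f\in\Z_p[[T]]^{\psi=\mathrm{id}}$ and $(1-\varphi)f=g$. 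Then $\mu_f^\times$ corresponds to $g$ under $\mathcal P$ by Lemma~\ref{lem:4.3}(i), and Corollary~\ref{cor:4.2} provides $\epsilon\in\mathcal U_\infty$ with $\Delta(f_\epsilon)=f$, so $\mathcal C(\epsilon)=\mu_f^\times$ realizes the given element of $I(\Z_p^\times)$.

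The one step requiring real work — and the main obstacle — is the convergence of $\sum_n\varphi^n(g)$, i.e.\ $\varphi^n(g)\to0$ in $\Z_p[[T]]$, and this is where oddness of $p$ enters. Since $(1+T)^p-1=pT+\binom{p}{2}T^2+\cdots+T^p$ has $(p,T)$-adic valuation exactly $2$ — the term $pT$ being the unique one of valuation $2$, all others of valuation $\ge3$ because $p\mid\binom{p}{i}$ for $1<i<p$ and $p\ge3$ — substituting this series for $T$ into $h\in T\,\Z_p[[T]]$ and expanding term by term, and using that the associated graded ring $\F_p[\bar p,\bar T]$ is an integral domain, yields $v_{(p,T)}(\varphi(h))\ge v_{(p,T)}(h)+1$. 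Since $g(0)=0$ forces $\varphi^n(g)(0)=g(0)=0$ for all $n$, iteration gives $v_{(p,T)}(\varphi^n(g))\ge v_{(p,T)}(g)+n\to\infty$, so $\varphi^n(g)\to0$, the series converges in the complete ring $\Z_p[[T]]$, and the telescoping and the computation of $\psi(f)$ are legitimate. Everything else is formal.
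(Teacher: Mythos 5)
Your proof is correct, and its overall reduction matches the paper's: both use Corollary~\ref{cor:4.2}, Lemma~\ref{lem:4.3}(i) and the Iwasawa isomorphism $\mathcal P$ to reduce the statement to showing that $1-\varphi$ maps $\Z_p[[T]]^{\psi=\mathrm{id}}$ onto $\{g\in\Z_p[[T]] : \psi(g)=0,\ g(0)=0\}$. The difference lies in how that surjectivity is established: the paper simply cites \cite[Lemma 2.4.3]{CS}, whereas you reprove it directly by building the preimage as the geometric series $f=\sum_{n\ge0}\varphi^n(g)$ and justifying convergence via the $(p,T)$-adic contraction $v_{(p,T)}(\varphi(h))\ge v_{(p,T)}(h)+1$ for $h\in T\,\Z_p[[T]]$. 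That estimate is right, though you do not actually need the integral-domain property of the graded ring: for the one-sided inequality it already suffices that $v_{(p,T)}\bigl((1+T)^p-1\bigr)=2$, since then $v_{(p,T)}\bigl(a_k((1+T)^p-1)^k\bigr)\ge v_p(a_k)+2k\ge v_{(p,T)}(h)+k\ge v_{(p,T)}(h)+1$ for every $k\ge1$. Your derivation of $\psi\circ\varphi=\mathrm{id}$ from the defining identity of $\psi$ and injectivity of $\varphi$, the identification of $\mathcal P(\Z_p[[\Z_p^\times]])$ with $\ker\psi$ via the projector $1-\varphi\psi$, and the telescoping identities $(1-\varphi)f=g$ and $\psi(f)=f$, are all correct. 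So your version buys a self-contained proof in place of the external citation, at the cost of the extra convergence lemma; otherwise the two arguments are the same.
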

\begin{proof} It follows from corollary \ref{cor:4.2} that the image of $\mathcal{U}_{\infty} $ by the map $[Col]$ is equal to $\Z_p[[T]]^{\psi = id}$.
 Let $f\in \Z_p[[T]]^{\psi = id}$. Then it follows from lemma \ref{lem:4.3} that
 \[
  \mathcal{P}(\mu _f^\times )(0)=\int _{\Z_p^\times }d\mu _f^\times=(f-\varphi (f))(0)=0\,.
 \]
Hence it follows that $\mu ^\times _{\Delta (f_\epsilon)}\in I(\Z_p^\times )$ for $\epsilon \in \mathcal{U}_{\infty}$.

Let $\nu \in I(\Z_p^\times )$. Then $ \mathcal{P}(\nu )\in \Z_p[[T]]^{\psi = 0}$ and $\mathcal{P}(\nu )(0)=0$. 
It follows from \cite[Lemma 2.4.3.]{CS} that there is $g\in \Z_p[[T]]^{\psi = id}$ such that $\mathcal{P}(\nu )=g-\varphi (g)$. 
Moreover $g=[Col](\epsilon)=\Delta (f_\epsilon)$ for some $\epsilon \in \mathcal{U}_{\infty} $ by corollary \ref{cor:4.2}. We have 
\[
 \mathcal{P}(\mu _{\Delta (f_\epsilon)}^\times )=\Delta (f_\epsilon)-\phi (\Delta (f_\epsilon))=g-\phi (g)=\mathcal{P}(\nu)
\]
by lemma \ref{lem:4.3}. Hence it follows that $\nu =\mu _{\Delta (f_\epsilon)}^\times$.
\end{proof}
\begin{prop} \label{prop:8.2} Let $c\in \Z_p-\mu _{p-1}$ be such that its class modulo $p^2$ generates $(\Z /p^2\Z)^\times$. 
Then we have an isomorphism of $\Z_p[[\Gamma ]]$-modules
\[
 Gal (\mathcal{K}_p/\mathcal{K}_p^0)\rightarrow (\zeta _p)
\]
given by $\sigma \mapsto \kappa (\widearrow{10})^\times (\sigma )$.
\end{prop}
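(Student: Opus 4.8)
The plan is to assemble the isomorphism from the pieces already established in the paper. First I would note that we have the surjection $[cl] : \mathcal{U}_{\infty}^1 \twoheadrightarrow Gal(\mathcal{K}_p/\mathcal{K}_p^0)$ from local class field theory together with the composition $z_p = \mathfrak{A}(\widearrow{10})^\times$, which by Proposition~\ref{prop:3.1}(i) is injective on $Gal(\mathcal{K}_p/\Q(\mu_{p^\infty}))$. So the map $\sigma \mapsto \mathfrak{A}(\widearrow{10})^\times(\sigma)$ is injective; the only real content is to identify its image on $Gal(\mathcal{K}_p/\mathcal{K}_p^0)$ with the ideal $(\zeta_p)$.

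Next I would compute the image. For $\epsilon \in \mathcal{U}_{\infty}^1$, Proposition~\ref{prop:7.2} (or Corollary~\ref{cor:7.3} combined with the definition of $\zeta_p$) gives
\[
\mathfrak{A}(\widearrow{10})^\times([cl](\epsilon)) = (\delta_1 - \delta_{c^{-1}})^{-1}\cdot \mathcal{F}_c \cdot \mu_{\Delta(f_\epsilon)}^\times = -\zeta_p \cdot \mu_{\Delta(f_\epsilon)}^\times
\]
in $\mathfrak{F}$ (this is essentially Proposition~\ref{prop:A}). Since $[cl] : \mathcal{U}_{\infty}^1 \to Gal(\mathcal{K}_p/\mathcal{K}_p^0)$ is surjective, the image of $\mathfrak{A}(\widearrow{10})^\times$ on $Gal(\mathcal{K}_p/\mathcal{K}_p^0)$ equals the set of all $-\zeta_p \cdot \mu_{\Delta(f_\epsilon)}^\times$ as $\epsilon$ ranges over $\mathcal{U}_{\infty}^1$. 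The missing input is that this set equals $\zeta_p \cdot I(\Z_p^\times) = (\zeta_p)$: the inclusion ``$\subset$'' needs that $\mu_{\Delta(f_\epsilon)}^\times \in I(\Z_p^\times)$ for $\epsilon$ in $\mathcal{U}_{\infty}^1$ (a subset of $\mathcal{U}_\infty$, so this follows from Lemma~\ref{lem:8.1} or directly from Corollary~\ref{cor:4.4}), and the inclusion ``$\supset$'' needs that every element of $I(\Z_p^\times)$ arises as $\mu_{\Delta(f_\epsilon)}^\times$ for some $\epsilon \in \mathcal{U}_{\infty}^1$. For the latter I would invoke Lemma~\ref{lem:8.1}, which says the image of $\mathcal{C} : \mathcal{U}_\infty \to \Z_p[[\Z_p^\times]]$ is exactly $I(\Z_p^\times)$, together with Lemma~\ref{lem:6.1}: since $\mathcal{U}_\infty/\mathcal{U}_\infty^1 \cong (\Z/p)^\times$ is of order prime to $p$ and $\mathcal{C}$ factors through a pro-$p$ group (both $[Col]$ composed with the relevant quotient, or simply because $\Z_p[[\Z_p^\times]]$ is pro-$p$), $\mathcal{C}$ restricted to $\mathcal{U}_\infty^1$ has the same image $I(\Z_p^\times)$.

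I would then record that the resulting bijection $Gal(\mathcal{K}_p/\mathcal{K}_p^0) \to (\zeta_p) = \zeta_p \cdot I(\Z_p^\times)$ is a morphism of $\Z_p[[\Gamma]]$-modules: $\mathfrak{A}(\widearrow{10})^\times$ is a $\Z_p[[\Gamma]]$-map by Proposition~\ref{prop:1.3} and the fact (Section~3) that the restriction $\mathcal{R}$ is a $\Z_p[[\Gamma]]$-map, and $[cl]$ is $\Z_p[[\Gamma]]$-equivariant by construction, so the composite is a $\Z_p[[\Gamma]]$-homomorphism; being injective with image $(\zeta_p)$, it is the desired isomorphism. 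Finally I should double-check the normalization of $c$: the definition of the ideal $(\zeta_p)$ uses a $c$ whose class mod $p^2$ generates $(\Z/p^2\Z)^\times$, and Lemma~\ref{lem:7.8} tells us $(\zeta_p) = (\mathcal{F}_c) = I(\Z_p^\times)\zeta_p$ for such $c$, which is precisely the $c$ in the statement. The main obstacle I anticipate is the surjectivity half — pinning down that $\mathcal{C}|_{\mathcal{U}_\infty^1}$ still surjects onto $I(\Z_p^\times)$, i.e. handling the prime-to-$p$ quotient $\mathcal{U}_\infty/\mathcal{U}_\infty^1$ correctly and making sure no information is lost when passing from $\mathcal{U}_\infty$ to $\mathcal{U}_\infty^1$; everything else is bookkeeping with results already in hand.
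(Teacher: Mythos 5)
Your proposal takes essentially the same route as the paper: injectivity from Proposition~\ref{prop:3.1}, the image identification via Proposition~\ref{prop:7.2} and Lemma~\ref{lem:8.1}, and the conclusion via the identity $(\delta_1-\delta_{c^{-1}})^{-1}I(\Z_p^\times)=\Z_p[[\Z_p^\times]]$ (equivalently Lemma~\ref{lem:7.8}). You are, if anything, more careful than the paper on one point: the paper applies Lemma~\ref{lem:8.1} (which is stated for $\mathcal{U}_\infty$) to the image of $\mathcal{C}|_{\mathcal{U}_\infty^1}$ without comment, while you explicitly observe that $\mathcal{U}_\infty/\mathcal{U}_\infty^1\cong(\Z/p\Z)^\times$ has order prime to $p$ and $\Z_p[[\Z_p^\times]]$ has no prime-to-$p$ torsion (equivalently, $\ker[Col]=\mu_{p-1}$ by Corollary~\ref{cor:4.2} and $\mathcal{U}_\infty=\mathcal{U}_\infty^1\times\mu_{p-1}$), so restricting does not shrink the image.
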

\begin{proof}
 Following proposition \ref{prop:3.1} the morphism
 \[
 z_p^{-}: Gal(\mathcal{K}_p/\mathbb{Q}(\mu_{p^{\infty}}))   \rightarrow \mathbb{Z}_p[[\mathbb{Z}_p^{\times}]]
 \]
 is an injective morphism of $\Z_p[[\Gamma ]]$-modules. Following \cite[Lemma 4.2.5.]{CS} 
 \[
  I(\Z_p^\times )=(\delta _1-\delta _{c^{-1}})\mathbb{Z}_p[[\mathbb{Z}_p^{\times}]]\,.
 \]
Hence it follows that
\begin{equation} \label{eq:16b}
(\delta _1-\delta _{c^{-1}})^{-1} I(\Z_p^\times )=\mathbb{Z}_p[[\mathbb{Z}_p^{\times}]]\,.
\end{equation}
The image of the map 
\[
  [cl]:\mathcal{U}^1_\infty\rightarrow Gal(\mathcal{K}_p/\Q (\mu _{p^\infty}))                    
\]
is equal $Gal (\mathcal{K}_p/\mathcal{K}_p^0)$. Hence it follows from proposition \ref{prop:7.2}, lemma \ref{lem:8.1} and the equality (\ref{eq:16b})
that
\[
 z_p^{-}( Gal(\mathcal{K}_p/\mathcal{K}_p^0))=(\zeta _p)\,.
\]
\end{proof}
We recall from Introduction that $\mathcal{L}_p$ is the maximal abelian pro-$p$, contained in $\mathcal{M}_p$, 
extension of $\Q (\mu _{p^\infty })$ unramified everywhere.
Now we shall prove the main conjecture assuming the Vandiver conjecture for a prime $p$.
\begin{prop} \label{prop:8.3} Let us assume that $p$ does not divide the class number $h(\mathbb{Q}(\mu_{p})^+)$,i.e. 
 that the Vandiver conjecture holds for a prime $p$. Then we have an isomorphism of $\Z_p[[\Gamma ]]$-modules
 \[
  Gal(\mathcal{L}_p^-/\mathbb{Q}(\mu_{p^{\infty}}))\cong \Z_p[[\Z_p^\times ]]^-/(\zeta _p)\,.
 \]
\end{prop}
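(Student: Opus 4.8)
The plan is to deduce Proposition~\ref{prop:8.3} by assembling three already-established facts: the unconditional Proposition~\ref{prop:8.2}, the identity $\mathcal{K}_p=\mathcal{M}_p^-$ from Lemma~\ref{lem:6.4.}, and the fact (Proposition~\ref{prop:6.3} together with Lemma~\ref{lem:3.3}) that under the Vandiver conjecture the map $z_p^-$ is an isomorphism onto $\Z_p[[\Z_p^\times]]^-$. All the genuine work has been done in those results; what remains is a bookkeeping argument, the one slightly delicate point being the identification of $\mathcal{K}_p^0$ with $\mathcal{L}_p^-$.

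First I would record that field-theoretic identification. Assuming Vandiver, Lemma~\ref{lem:6.4.} gives $\mathcal{K}_p=\mathcal{M}_p^-$, so $\mathcal{K}_p^0$, being by definition the maximal unramified extension of $\Q(\mu_{p^\infty})$ contained in $\mathcal{K}_p$, is the maximal unramified extension of $\Q(\mu_{p^\infty})$ contained in $\mathcal{M}_p^-$, that is, $\mathcal{L}_p\cap\mathcal{M}_p^-$. I would then observe that the $\pm$-splitting is functorial for quotients of $Gal(\mathcal{M}_p/\Q(\mu_{p^\infty}))$ and that being unramified over $\Q(\mu_{p^\infty})$ cuts out precisely the quotient corresponding to $\mathcal{L}_p$; concretely, a subextension $F/\Q(\mu_{p^\infty})$ of $\mathcal{M}_p$ lies in $\mathcal{L}_p^-$ exactly when it is unramified over $\Q(\mu_{p^\infty})$ and $Gal(F/\Q(\mu_{p^\infty}))^+=0$. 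Hence $\mathcal{L}_p\cap\mathcal{M}_p^-=\mathcal{L}_p^-$, so $\mathcal{K}_p^0=\mathcal{L}_p^-$, and therefore
\[
Gal(\mathcal{L}_p^-/\Q(\mu_{p^\infty})) \cong Gal(\mathcal{K}_p/\Q(\mu_{p^\infty}))\big/ Gal(\mathcal{K}_p/\mathcal{K}_p^0)
\]
as $\Z_p[[\Gamma]]$-modules.

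Next I would feed this quotient through $z_p^-$. By Proposition~\ref{prop:6.3} and Lemma~\ref{lem:3.3}, the Vandiver hypothesis forces
\[
z_p^-\colon Gal(\mathcal{K}_p/\Q(\mu_{p^\infty})) \longrightarrow \Z_p[[\Z_p^\times]]^-
\]
to be an isomorphism of $\Z_p[[\Gamma]]$-modules (here $Gal(\mathcal{K}_p/\Q(\mu_{p^\infty}))$ coincides with its $-$-part by Proposition~\ref{prop:3.1}(ii)). By Proposition~\ref{prop:8.2}, which is unconditional, $z_p^-$ restricts to an isomorphism of the subgroup $Gal(\mathcal{K}_p/\mathcal{K}_p^0)$ onto the ideal $(\zeta_p)\subset\Z_p[[\Z_p^\times]]^-$. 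Passing to quotients, $z_p^-$ induces the desired isomorphism of $\Z_p[[\Gamma]]$-modules
\[
Gal(\mathcal{L}_p^-/\Q(\mu_{p^\infty})) \cong \Z_p[[\Z_p^\times]]^-\big/(\zeta_p).
\]

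The only step that needs any care is the identification $\mathcal{K}_p^0=\mathcal{L}_p^-$, i.e.\ that forming the maximal unramified subextension commutes with passing to the $-$-part; this is where I expect to spend a couple of lines pinning down the functoriality of the $\pm$-splitting. Everything else is a formal consequence of Propositions~\ref{prop:8.2} and~\ref{prop:6.3} and Lemmas~\ref{lem:3.3} and~\ref{lem:6.4.}.
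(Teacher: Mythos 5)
Your proof is correct and follows the same route as the paper's: deduce $\mathcal{K}_p=\mathcal{M}_p^-$ and hence $\mathcal{K}_p^0=\mathcal{L}_p^-$ from Lemma~\ref{lem:6.4.}, use Proposition~\ref{prop:6.3} with Lemma~\ref{lem:3.3} to make $z_p^-$ an isomorphism onto $\Z_p[[\Z_p^\times]]^-$, apply Proposition~\ref{prop:8.2} to identify the image of $Gal(\mathcal{K}_p/\mathcal{K}_p^0)$ with $(\zeta_p)$, and pass to quotients. If anything you are more careful than the paper's two-line proof: you justify the identification $\mathcal{K}_p^0=\mathcal{L}_p^-$ (which the paper simply asserts) and cite Proposition~\ref{prop:6.3} where the paper's text somewhat confusingly cites Proposition~\ref{prop:8.2} for the isomorphism of $z_p^-$.
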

\begin{proof}
 It follows from lemma \ref{lem:6.4.} that $\mathcal{M}_p^-=\mathcal{K}_p$. Therefore $\mathcal{L}_p^-=\mathcal{K}_p^0$. The morphism
 \[
  z_p^{-}: Gal(\mathcal{K}_p/\mathbb{Q}(\mu_{p^{\infty}}))   \rightarrow \mathbb{Z}_p[[\mathbb{Z}_p^{\times}]]
 \]
is an isomorphism of $\Z_p[[\Gamma ]]$-modules by proposition  \ref{prop:8.2} and by lemma \ref{lem:3.3}. Hence it follows from proposition \ref{prop:8.2}
that $Gal(\mathcal{K}_p^0/\mathbb{Q}(\mu_{p^{\infty}}))\cong \Z_p[[\Z_p^\times ]]^-/(\zeta _p)$
\end{proof}

\section{The Ihara formula for all $m$}

In this section we show that the Ihara formula from Introduction holds for all $m$. In fact only the case $m=1$ requires a careful proof.
\begin{lem}  \label{lem:9.1} Let $c\in \Z_p^{\times} - \mu_{p-1}$. Then
\[
 \int_{\Z_p^{\times}}d\mathcal{F}_c(x)=(1-p^{-1})\log (c^{-1}).
\]
\end{lem}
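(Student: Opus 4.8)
The plan is to compute the total mass of $\mathcal{F}_c$ by rewriting it as an integral against the Stickelberger-type measure $E_{1,c}^\times$ and then reading the value off from the defining interpolation property of the Kubota--Leopoldt $p$-adic $L$-function; the factor $1-p^{-1}$ will emerge as the residue of $L_p(\,\cdot\,,\omega^0)$ at $s=1$. First I would unwind the definitions. Since $\mathcal{E}_c=j_*(E_{1,c}^\times)$ with $j(x)=x^{-1}$, and $d\mathcal{F}_c(x)=x\,d\mathcal{E}_c(x)$, for any continuous $g$ on $\Z_p^\times$ one has $\int_{\Z_p^\times}g(x)\,d\mathcal{F}_c(x)=\int_{\Z_p^\times}x^{-1}g(x^{-1})\,dE_{1,c}^\times(x)$; taking $g\equiv 1$ gives
\[
\int_{\Z_p^\times} d\mathcal{F}_c(x) \;=\; \int_{\Z_p^\times} x^{-1}\,dE_{1,c}^\times(x),
\]
which is just the $m=1$ case of the identity $\int x^{m-1}\,d\mathcal{F}_c=\int x^{-m}\,dE_{1,c}^\times$ already used (via (\ref{eq:13})) for $m>1$.

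Next I would invoke the defining relation of $E_{1,c}^\times$ with trivial character $\beta=0$, namely $\int_{\Z_p^\times}\langle x\rangle^s x^{-1}\,dE_{1,c}^\times(x)=(\langle c\rangle^s-1)\,L_p(1-s,\omega^0)$, and let $s\to 0$ in $\Z_p$. On the left, $\langle x\rangle^s\to 1$ uniformly on the compact set $\Z_p^\times$ (since $\log\langle x\rangle\in p\Z_p$), so the left side tends to $\int_{\Z_p^\times}x^{-1}\,dE_{1,c}^\times(x)=\int_{\Z_p^\times}d\mathcal{F}_c(x)$. On the right, $\langle c\rangle^s-1=s\log\langle c\rangle+O(s^2)$, while $L_p(\,\cdot\,,\omega^0)$ has a simple pole at $s=1$ with residue $1-p^{-1}$, so $L_p(1-s,\omega^0)=-(1-p^{-1})/s+O(1)$; hence the right side tends to $-(1-p^{-1})\log\langle c\rangle$. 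Since the $p$-adic logarithm kills $\mu_{p-1}$ we have $\log\langle c\rangle=\log c=-\log(c^{-1})$, and therefore $\int_{\Z_p^\times}d\mathcal{F}_c(x)=(1-p^{-1})\log(c^{-1})$, as claimed. (The hypothesis $c\notin\mu_{p-1}$ is exactly what makes $\langle c\rangle^s-1\neq 0$ and the definition of $L_p$ via $E_{1,c}$ legitimate.)

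I expect the residue $1-p^{-1}$ of $L_p(s,\omega^0)$ at $s=1$ — the $p$-adic analogue of the residue of $\zeta(s)$ at $s=1$, with the Euler factor at $p$ removed — to be the only genuine content; it is the Leopoldt/$p$-adic class number formula, and I would cite \cite{Wa} (or \cite{Lang}) for it. Everything else is bookkeeping with the definitions of $\mathcal{E}_c$ and $\mathcal{F}_c$ and an elementary interchange of limit and integral by uniform convergence. Equivalently, one could obtain the same equality by restricting the Ihara formula (\ref{eq:11})/(\ref{eq:13}) to integers $m\equiv 1\pmod{p-1}$ with $m\neq 1$ — on that range $x^{m-1}=\langle x\rangle^{m-1}$ for every $x\in\Z_p^\times$, so $\int x^{m-1}\,d\mathcal{F}_c$ is continuous in $m$ — and letting $m\to 1$; this is precisely the shape in which Lemma \ref{lem:9.1} feeds into the $m=1$ case of the Ihara formula.
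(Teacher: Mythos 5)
Your proof is correct, and it takes a genuinely different route from the paper's. You unwind $\int_{\Z_p^\times}d\mathcal F_c=\int_{\Z_p^\times}x^{-1}\,dE_{1,c}^\times$, plug $\beta=0$ into the defining interpolation identity, and let $s\to 0$, invoking the residue $\mathrm{Res}_{s=1}L_p(s,\omega^0)=1-p^{-1}$ to evaluate the resulting $0\cdot\infty$ product; the uniform convergence of $\langle x\rangle^s\to 1$ justifies the interchange of limit and integral. The paper instead works entirely inside Coleman's framework: it writes down the explicit power series $\mathcal P(E_{1,c}^\times)$, produces a primitive $g$ under the operator $D=(1+T)\frac{d}{dT}$ (which is an automorphism of $\Z_p[[T]]^{\psi=0}$), and reads off the answer as $g(0)$. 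The two computations are really the same fact seen from opposite ends: the paper's explicit $D^{-1}$ calculation is in effect a self-contained re-derivation of the residue formula, whereas you import that residue formula as a citation and then the lemma is pure bookkeeping. Your route is shorter and makes the role of the residue transparent (the paper itself remarks on this just before Proposition \ref{prop:9.4}), while the paper's route stays inside the $\Z_p[[T]]^{\psi=0}$ machinery that the rest of Section 9 is built on and needs no external input. One small slip in attribution: the residue of $L_p(s,\eins)$ at $s=1$ being $1-p^{-1}$ is not the Leopoldt $p$-adic class number formula (which concerns $L_p(1,\chi)$ for $\chi$ nontrivial even, via $p$-adic regulators); it is a separate and more elementary computation, e.g.\ Theorem 5.11 of Washington \cite{Wa}. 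That does not affect the validity of the argument, only the citation you would give.
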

\begin{proof}  We recall that $E_{1,c}$ is the regularized Bernoulli measure and $E_{1,c}^{\times}$ is its restriction to $ \Z_p^{\times}$. The power series 
$\mathcal{P}(E_{1,c})$ is equal $1/T-c/((1+T^c-1)$ (see \cite[Chapter 4, Proposition 3.4.]{Lang}).Hence it follows from lemma  \ref{lem:4.3} that
\[
 \mathcal{P}(E_{1,c}^{\times})(T)=\frac{1}{T}-\frac{c}{(1+T)^c-1}-\frac{1}{(1+T)^p-1}+\frac{c}{(1+T)^{pc}-1}\,.
\]
Observe that $\varepsilon =\big( \frac{\xi _{p^{n+1}}-1}{{\xi _{p^{n+1}}^c-1}}\big)_{n\in \NN}\in \mathcal{U}_{\infty}$. The corresponding power series 
$f_{\varepsilon}=\frac{T}{(1+T)^c-1}$ belongs to $W$.
Let us set
\[
 g(T):=\log (\frac{T}{(1+T)^c-1})-\frac{1}{p} \log(\frac{(1+T)^p-1}{(1+T)^{pc}-1})\,.
\]
It follows from \cite[lemma  2.5.1]{CS} that $g(T)\in \Z_p[[T]]^{\Psi =0}$. The operator $D$ defined by $(Df)(T):=(1+T)f^\prime (T)$
is an automorphism of  $\Z_p[[T]]^{\Psi =0}$ (see \cite[Corollary on page 2]{Coleman1}). One checks that
\[
 Dg=\mathcal{P}(E_{1,c}^\times)\,.
\]
Therefore
\[
 \int_{\Z_p^{\times}}d\mathcal{F}_c(x)=\int_{\Z_p^{\times}}x^{-1}dE_{1,c}^\times(x)=(D^{-1}\mathcal{P}(E_{1,c}^\times))(0)=g(0)=(1-p^{-1})\log (c^{-1})
\]
(see \cite[Lemma 3.4.]{NSW}).
\end{proof}

We define a sequence of $\Z_p$-algebra isomorphisms
\[
 \Z_p[[\Z_p^\times ]]\rightarrow \Z_p[[\mu _{p-1}\times (1+p\Z_p)  ]],\, 
 \Z_p^\times \ni [x]\mapsto [\omega (x),x\omega(x)^{-1}]\in \Z_P[[\mu _{p-1}\times (1+p\Z_p)]];
\]
\[
 \Z_p[[\mu _{p-1}\times (1+\Z_p)]]\rightarrow \Z_p[\Delta ][[1+p\Z_p]],\,[\varepsilon , x]\mapsto \varepsilon [x],
\]
where we view $\mu _{p-1}$ as an abstract group denoted by $\Delta$;
\[
 \Z_p[\Delta ][[1+p\Z_p]]\rightarrow \Z_p[\Delta ][[\Z_p]],\, [x]\mapsto [\frac{\log x}{\log q}],
\]
where $q=p+1$ if $p\neq 2$ and $q=5$ if $p=2$.
The composition of these isomorphisms we denote by $\alpha$. Let 
\[
 \varepsilon : \Z_p[\Delta ][[\Z_p]]\rightarrow \Z_p[[\Z_p]]
\]
be the augmentation map with respect to $\Delta$. Let us set
\[
 A:=\varepsilon \circ \alpha \, .
\]
Then $A$ is also a morphism of $\Z_p$-algebras.

\begin{lem}  \label{lem:9.2}
Let $\mu \in  \Z_p[[\Z_p^\times ]]$. Then
\[
 \int_{\Z_p^{\times}}d\mu =\int_{\Z_p }dA(\mu )\,.
\]
\end{lem}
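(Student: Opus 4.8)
The plan is to reduce the statement to the single observation that, for a measure $\nu$ on $\Z_p^\times$ (resp.\ on $\Z_p$), the quantity $\int d\nu$ is nothing but the augmentation of the corresponding element of the Iwasawa algebra, and then to chase this augmentation through the chain of isomorphisms defining $A$. Concretely, the map $\nu \mapsto \int_{\Z_p^\times} d\nu$ coincides with the continuous $\Z_p$-algebra augmentation $\aug\colon \Z_p[[\Z_p^\times]] \to \Z_p$, $[x]\mapsto 1$: both are continuous and agree on the dense $\Z_p$-submodule $\Z_p[\Z_p^\times]$ spanned by the Dirac measures $[x]$, since $\int_{\Z_p^\times} d[x] = 1$. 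Likewise $\nu \mapsto \int_{\Z_p} d\nu$ is the augmentation $\aug\colon \Z_p[[\Z_p]]\to \Z_p$. Thus the lemma is equivalent to the identity $\aug = \aug\circ A$ of $\Z_p$-algebra homomorphisms $\Z_p[[\Z_p^\times]]\to \Z_p$.

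Since $A = \varepsilon\circ\alpha$ is a composition of continuous $\Z_p$-algebra homomorphisms, and $\aug$ is a continuous $\Z_p$-algebra homomorphism, it suffices to check $\aug(A([x])) = \aug([x]) = 1$ on the topological generators $[x]$, $x\in \Z_p^\times$. I would simply trace $[x]$ through the three isomorphisms: it is sent to $[\omega(x), x\omega(x)^{-1}]$, then to $\omega(x)\,[x\omega(x)^{-1}]\in \Z_p[\Delta][[1+p\Z_p]]$ (with $\omega(x)$ now read in the abstract group $\Delta$), then to $\omega(x)\,\big[\tfrac{\log(x\omega(x)^{-1})}{\log q}\big]\in \Z_p[\Delta][[\Z_p]]$; finally the $\Delta$-augmentation $\varepsilon$ sends the coefficient $\omega(x)\in\Z_p[\Delta]$ to $1$, so $A([x]) = \big[\tfrac{\log(x\omega(x)^{-1})}{\log q}\big]$ is a single group-like element of $\Z_p[[\Z_p]]$, whence $\aug(A([x])) = 1$. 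By $\Z_p$-linearity and continuity the equality extends from $\Z_p[\Z_p^\times]$ to all of $\Z_p[[\Z_p^\times]]$, giving the claim.

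I do not expect a genuine obstacle here; the only point deserving care is that the middle isomorphism $\Z_p[[\mu_{p-1}\times(1+p\Z_p)]] \to \Z_p[\Delta][[1+p\Z_p]]$ is \emph{not} compatible with the naive augmentations unless one also augments the coefficient ring $\Z_p[\Delta]$ — but that is precisely the role of the final map $\varepsilon$, so that the composite $A$ carries the augmentation of $\Z_p[[\Z_p^\times]]$ to that of $\Z_p[[\Z_p]]$ even though $\alpha$ alone only matches the full (coefficients-included) augmentation on $\Z_p[\Delta][[\Z_p]]$. Everything else is formal, using that all the maps involved are, by construction, continuous $\Z_p$-algebra homomorphisms.
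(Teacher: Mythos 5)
Your proof is correct and follows essentially the same route as the paper: the paper simply says ``one checks the formula for $\mu\in\Z_p[\Z_p^\times]$ and extends by continuity,'' and your argument is precisely a spelled-out version of that check, identifying the integral with the augmentation and tracing a group-like element $[x]$ through the maps defining $A$.
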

\begin{proof}
 One cheks that the formula is true for any $\mu \in \Z_p[\Z_p^\times]$. Hence by continuity the formula holds for any $\mu \in \Z_p[[\Z_p^\times ]]$.
\end{proof}
\begin{lem}  \label{lem:9.3} 
Let $\mu \in  \Z_p[[\Z_p^\times ]]$ be such that $\int_{\Z_p^{\times}}d\mu =0$. Let $c\in \Z_p^\times $ be such that its class modulo $p^2$ 
generates $(\Z/p^2\Z)^\times$. Then there exists a unique $\nu \in \Z_p[[\Z_p^\times ]]$ such that
 \[
  \mu =(\delta _1 -\delta _c)\cdot \nu
 \]
and
\[
 \int_{\Z_p^{\times}}d\nu =(-\frac{\log q}{\log c})\int_{\Z_p }xdA(\mu )(x)\,.
\]
\end{lem}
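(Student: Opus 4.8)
The strategy is to first obtain $\nu$ from the fact that $\mu$ lies in the augmentation ideal, and then to extract $\int_{\Z_p^\times}d\nu$ by pushing the relation $\mu=(\delta_1-\delta_c)\nu$ through the $\Z_p$-algebra morphism $A$ followed by the Iwasawa isomorphism $\mathcal P\colon\Z_p[[\Z_p]]\xrightarrow{\sim}\Z_p[[T]]$, $\mathcal P([a])=(1+T)^a$, and reading off the coefficient of $T$.

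\textbf{Existence and uniqueness.} The hypothesis $\int_{\Z_p^\times}d\mu=0$ says precisely that $\mu\in I(\Z_p^\times)$. As the class of $c$ modulo $p^2$ generates $(\Z/p^2\Z)^\times$, so does the class of $c^{-1}$, and hence, exactly as in the proof of Proposition \ref{prop:8.2} (that is, by \cite[Lemma 4.2.5]{CS}), $I(\Z_p^\times)=(\delta_1-\delta_c)\,\Z_p[[\Z_p^\times]]$. Thus a $\nu$ with $\mu=(\delta_1-\delta_c)\nu$ exists. Furthermore $c$ has order $p(p-1)$ modulo $p^2$, so $c\notin\mu_{p-1}$, and then $\delta_1-\delta_c$ is not a zero divisor in $\Z_p[[\Z_p^\times]]$ by \cite[Lemma 4.2.2]{CS}; hence such a $\nu$ is unique.

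\textbf{The map $A$ on $\delta_c$.} Write $c=\omega(c)\langle c\rangle$ with $\langle c\rangle\in 1+p\Z_p$, and recall that the Iwasawa logarithm gives $\log c=\log\langle c\rangle$. Tracing the group element $\delta_c=[c]$ through the three isomorphisms composing $\alpha$ and then through the augmentation $\varepsilon$ with respect to $\Delta$, one finds $A(\delta_c)=\bigl[\tfrac{\log\langle c\rangle}{\log q}\bigr]$ in $\Z_p[[\Z_p]]$ — the $\Delta$-component produced by $\alpha$ is the Teichm\"uller part $\omega(c)$, and $\varepsilon$ sends it to $1$ — while $A(\delta_1)=[0]$. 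Put $\gamma:=\tfrac{\log\langle c\rangle}{\log q}$. Since $c$ generates $(\Z/p^2\Z)^\times$, the principal unit $\langle c\rangle$ is a topological generator of $1+p\Z_p$, so $\ord_p\log\langle c\rangle=\ord_p\log q=1$, whence $\gamma\in\Z_p^\times$.

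\textbf{Extracting the constant term.} Applying the ring homomorphism $\mathcal P\circ A$ to $\mu=(\delta_1-\delta_c)\nu$ gives, with $F:=\mathcal P(A(\mu))$ and $G:=\mathcal P(A(\nu))$,
\[
F=\bigl(1-(1+T)^\gamma\bigr)\,G \quad\text{in }\Z_p[[T]].
\]
By Lemma \ref{lem:9.2} and the identity $\int_{\Z_p}d\lambda=\mathcal P(\lambda)(0)$ (the $X=0$ instance of $(\ref{eq:4.2})$) we get $F(0)=\int_{\Z_p}dA(\mu)=\int_{\Z_p^\times}d\mu=0$ and $G(0)=\int_{\Z_p}dA(\nu)=\int_{\Z_p^\times}d\nu$, the quantity we want. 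The power series $(1+T)^\gamma-1$ has vanishing constant term and coefficient of $T$ equal to $\gamma\in\Z_p^\times$, so $1-(1+T)^\gamma=-T\,h(T)$ with $h\in\Z_p[[T]]^\times$ and $h(0)=\gamma$. Since $F(0)=0$ we may write $F=T\,\widetilde F$ with $\widetilde F\in\Z_p[[T]]$, and $\widetilde F(0)$ is the coefficient of $T$ in $F$; cancelling $T$ in the domain $\Z_p[[T]]$ from $T\,\widetilde F=-T\,h\,G$ and evaluating at $T=0$ yields $\widetilde F(0)=-\gamma\,G(0)$. On the other hand, by $(\ref{eq:4.2})$ the coefficient of $T$ in $F=\mathcal P(A(\mu))$ equals the coefficient of $X$ in $F(e^X-1)=\sum_{k\ge 0}\bigl(\int_{\Z_p}x^k\,dA(\mu)(x)\bigr)\tfrac{X^k}{k!}$, namely $\int_{\Z_p}x\,dA(\mu)(x)$. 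Combining these, $\int_{\Z_p^\times}d\nu=G(0)=-\gamma^{-1}\int_{\Z_p}x\,dA(\mu)(x)=-\tfrac{\log q}{\log c}\int_{\Z_p}x\,dA(\mu)(x)$, which is the asserted formula.

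\textbf{Main obstacle.} The only genuinely delicate step is the explicit evaluation $A(\delta_c)=[\log\langle c\rangle/\log q]$: one has to follow $[c]$ through the chain of isomorphisms defining $\alpha$ and see that $\varepsilon$ kills the Teichm\"uller part, and one has to check $\gamma\in\Z_p^\times$ — this is exactly where we use that the class of $c$ modulo $p^2$ generates $(\Z/p^2\Z)^\times$, since it forces $\langle c\rangle$ to be a topological generator of $1+p\Z_p$. Once $\gamma$ is a unit, cancelling $T$ from $F=-T\,h(T)\,G$ is legitimate and everything else is routine power-series bookkeeping.
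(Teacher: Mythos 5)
Your proof follows essentially the same route as the paper: obtain existence and uniqueness of $\nu$ from \cite[Lemmas 4.2.2 and 4.2.5]{CS}, push $\mu=(\delta_1-\delta_c)\nu$ through the ring homomorphism $\mathcal P\circ A$ to get $\mathcal P(A(\mu))=(1-(1+T)^{x_0})\mathcal P(A(\nu))$ with $x_0=\log c/\log q$, and then compare the coefficients of $T^0$ and $T^1$. The only difference is that you spell out the computation $A(\delta_c)=[\log\langle c\rangle/\log q]$ and the verification that $x_0\in\Z_p^\times$ (needed to divide through), both of which the paper leaves implicit.
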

\begin{proof}
 It follows from \cite[Lemma 4.2.5]{CS} and from the fact that $\delta _1-\delta _c$ is not a zero divisor (see \cite[proof of Lemma 4.2.2.]{CS})
 that there is a unique $\nu \in  \Z_p[[\Z_p^\times ]]$ such that 
 \[
  \mu =(\delta _1-\delta _c)\cdot \nu
 \]
in $ \Z_p[[\Z_p^\times ]]$. Applying the $\Z_p$-algebra homomorphism $A$ to the above formula we get
\[
 A(\mu )=(\delta _0-\delta _{x_0})\cdot A(\nu ) 
\]
in $\Z_p[[\Z_p]]$, where $x_0=\frac{\log c}{\log q}$. Hence we have the following equality of power series in $\Z_p[[T]]$
\[
 \mathcal{P}(A(\mu ))(T)=(1-(1+T)^{x_0})\cdot \mathcal{P}(A(\nu ))(T)\,.
\]
It follows from lemma \ref{lem:9.2} that
\[
 \mathcal{P}(A(\mu))(0)=\int _{\Z_p}dA(\mu )=\int_{\Z_p^{\times}}d\mu =0\,.
\]
Comparing the coefficients at $T$ we get 
\[
 \int _{\Z_p}xdA(\mu )(x)=(-x_0)\int _{\Z_p}dA(\nu )=(-x_0)\int_{\Z_p^{\times}}d\nu \,.
\]
\end{proof}

In the next proposition we present the analogue of the Ihara formula from Introduction for $m=1$. 
Notice that $1-p^{-1} $ appearing in our formula is the residue of the p-adic zeta function of Kubota-Leopoldt.

\begin{prop}  \label{prop:9.4}
 Let $\epsilon \in \mathcal{U}_{\infty}^1$. Then we have
 \[
 \kappa (p)([cl](\epsilon ))= \int_{\Z_p^{\times}} d\mathfrak{A}(\widearrow{10})([cl](\epsilon ))=(1-p^{-1})\big(-\log q\int_{\Z_p  }xdA(\mu ^\times _{\Delta (f_{\epsilon})}) (x)\big)\,.
 \]
\end{prop}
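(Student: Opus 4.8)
The plan is to start from the Ihara formula \eqref{eq:11} together with its reformulation \eqref{eq:12}, which together give the equality \eqref{eq:15} for all $m>1$ odd, and then to push this to $m=1$ by a limiting argument that avoids dividing by the vanishing factor $1-c^{1-m}$. First I would fix $c\in\Z_p^\times-\mu_{p-1}$ whose class mod $p^2$ generates $(\Z/p^2\Z)^\times$, and recall from Proposition~\ref{prop:7.2} that $(\delta_1-\delta_{c^{-1}})\cdot\mathfrak{A}(\widearrow{10})^\times([cl](\epsilon))=\mathcal{F}_c\cdot\mu_{\Delta(f_\epsilon)}^\times$ holds as an identity in $\Z_p[[\Z_p^\times]]$ (not just after inverting). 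Applying the augmentation $\int_{\Z_p^\times}d(-)$ to both sides kills the left side (since $\delta_1-\delta_{c^{-1}}$ is in the augmentation ideal), so that is no good directly; instead I would use Lemma~\ref{lem:9.3} with $\mu=\mathcal{F}_c\cdot\mu_{\Delta(f_\epsilon)}^\times$, which lies in the augmentation ideal $I(\Z_p^\times)$ by Corollary~\ref{cor:4.4} (the factor $\mu_{\Delta(f_\epsilon)}^\times$ already has total mass zero).

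The key computation is then to evaluate $\int_{\Z_p^\times}d\nu$ where $\nu$ is the unique element with $\mathcal{F}_c\cdot\mu_{\Delta(f_\epsilon)}^\times=(\delta_1-\delta_c)\cdot\nu$; but from Proposition~\ref{prop:7.2} and the $c\leftrightarrow c^{-1}$ bookkeeping we have $\nu=\pm\,\mathfrak{A}(\widearrow{10})^\times([cl](\epsilon))$ (up to the sign/inversion coming from $\delta_{c^{-1}}$ versus $\delta_c$, which I would track carefully — this is a place where a sign error is easy). Hence $\int_{\Z_p^\times}d\mathfrak{A}(\widearrow{10})([cl](\epsilon))=\int_{\Z_p^\times}d\nu$, and Lemma~\ref{lem:9.3} gives $\int_{\Z_p^\times}d\nu=(-\tfrac{\log q}{\log c})\int_{\Z_p}x\,dA(\mathcal{F}_c\cdot\mu_{\Delta(f_\epsilon)}^\times)(x)$. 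Since $A$ is a $\Z_p$-algebra homomorphism and $\int_{\Z_p}dA(\mu_{\Delta(f_\epsilon)}^\times)=0$ by Lemma~\ref{lem:9.2} and Corollary~\ref{cor:4.4}, the coefficient of $T$ in $\mathcal{P}(A(\mathcal{F}_c\cdot\mu_{\Delta(f_\epsilon)}^\times))$ factors, and I would get $\int_{\Z_p}x\,dA(\mathcal{F}_c\cdot\mu_{\Delta(f_\epsilon)}^\times)(x)=\big(\int_{\Z_p}dA(\mathcal{F}_c)\big)\big(\int_{\Z_p}x\,dA(\mu_{\Delta(f_\epsilon)}^\times)(x)\big)$. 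By Lemma~\ref{lem:9.2} and Lemma~\ref{lem:9.1}, $\int_{\Z_p}dA(\mathcal{F}_c)=\int_{\Z_p^\times}d\mathcal{F}_c=(1-p^{-1})\log(c^{-1})=-(1-p^{-1})\log c$.

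Assembling the pieces: $\int_{\Z_p^\times}d\mathfrak{A}(\widearrow{10})([cl](\epsilon))=(-\tfrac{\log q}{\log c})\cdot\big(-(1-p^{-1})\log c\big)\cdot\int_{\Z_p}x\,dA(\mu_{\Delta(f_\epsilon)}^\times)(x)=(1-p^{-1})\,\log q\,\int_{\Z_p}x\,dA(\mu_{\Delta(f_\epsilon)}^\times)(x)$, and the $\log c$ cancels so the answer is independent of $c$, as it must be. The identification $\kappa(p)([cl](\epsilon))=\int_{\Z_p^\times}d\mathfrak{A}(\widearrow{10})([cl](\epsilon))$ is the $i=0$ (i.e. the $[0]$-coefficient, which is $\kappa(1/p^n)$) component of Proposition~\ref{prop:1.2} together with the description of the restriction to $\Z_p^\times$; strictly one should note that restricting to $\Z_p^\times$ removes the $[0]$ term, so I would instead read $\kappa(p)$ off directly as the total mass of $\mathfrak{A}(\widearrow{10})([cl](\epsilon))$ on all of $\Z_p$ minus the mass on $p\Z_p$, using that $\mathfrak{A}^\times$ has total mass zero by Corollary~\ref{cor:4.4} applied through Proposition~\ref{prop:A}-type reasoning — checking this identification cleanly is the part I expect to need the most care. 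Up to that bookkeeping, the main obstacle is purely the sign/normalization tracking through $j_*$, $\delta_{c^{-1}}$ vs.\ $\delta_c$, and $\log(c^{-1})=-\log c$; the analytic content is entirely contained in Lemmas \ref{lem:9.1}, \ref{lem:9.2}, \ref{lem:9.3} and the multiplicativity of $A$.
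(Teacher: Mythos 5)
Your approach genuinely differs from the paper's: the paper applies Lemma~\ref{lem:9.3} to $\mu^\times_{\Delta(f_\epsilon)}$ alone, producing a unique $\nu_{\epsilon,c}$ with $\mu^\times_{\Delta(f_\epsilon)}=(\delta_1-\delta_{c^{-1}})\nu_{\epsilon,c}$, then cancels the non-zero-divisor $\delta_1-\delta_{c^{-1}}$ in Proposition~\ref{prop:7.2} to get $\mathfrak{A}(\widearrow{10})^\times([cl](\epsilon))=\mathcal{F}_c\cdot\nu_{\epsilon,c}$, and finally uses the multiplicativity of $\Phi$ (Lemma~\ref{lem:3.1}) to factor $\int_{\Z_p^\times}d\mathfrak{A}^\times=\big(\int_{\Z_p^\times}d\mathcal{F}_c\big)\big(\int_{\Z_p^\times}d\nu_{\epsilon,c}\big)$. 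You instead apply Lemma~\ref{lem:9.3} to the product $\mathcal{F}_c\cdot\mu^\times_{\Delta(f_\epsilon)}$ and push the factorization into the first-moment step $\int_{\Z_p}x\,dA(\mathcal{F}_c\cdot\mu^\times)(x)=\big(\int_{\Z_p}dA(\mathcal{F}_c)\big)\big(\int_{\Z_p}x\,dA(\mu^\times)(x)\big)$; that identity is correct (it uses $\int dA(\mu^\times_{\Delta(f_\epsilon)})=0$ and the ring-homomorphism property of $A$ and $\mathcal{P}$), but it is an extra lemma you are implicitly proving. Both routes work, though the paper's is a little leaner because Lemma~\ref{lem:3.1} does the multiplicative bookkeeping once and for all.

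The genuine gap is the sign, and it sits exactly where you flagged it. If you read Lemma~\ref{lem:9.3} literally with $\delta_c$, then for $\mu=\mathcal{F}_c\cdot\mu^\times_{\Delta(f_\epsilon)}=(\delta_1-\delta_{c^{-1}})\cdot\mathfrak{A}^\times$, the unique $\nu$ with $\mu=(\delta_1-\delta_c)\nu$ is \emph{not} $\pm\mathfrak{A}^\times$: since $\delta_1-\delta_{c^{-1}}=-\delta_{c^{-1}}(\delta_1-\delta_c)$, one gets $\nu=-\delta_{c^{-1}}\cdot\mathfrak{A}^\times$, hence $\int_{\Z_p^\times}d\nu=-\int_{\Z_p^\times}d\mathfrak{A}^\times$. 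The clean fix is to apply Lemma~\ref{lem:9.3} with $c^{-1}$ in place of $c$; then $\nu=\mathfrak{A}^\times$ exactly, but the denominator becomes $\log(c^{-1})=-\log c$. You mixed the two conventions — took $\nu=\mathfrak{A}^\times$ while keeping $\log c$ in the denominator — and your final answer $(1-p^{-1})\log q\int_{\Z_p}x\,dA(\mu^\times_{\Delta(f_\epsilon)})(x)$ is therefore off by a sign from the stated result $(1-p^{-1})\big(-\log q\int_{\Z_p}x\,dA(\mu^\times_{\Delta(f_\epsilon)})(x)\big)$. Tracking either convention consistently recovers the paper's sign, so the argument is repairable. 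On the identification $\kappa(p)([cl](\epsilon))=\int_{\Z_p^\times}d\mathfrak{A}(\widearrow{10})^\times([cl](\epsilon))$: the paper just asserts this; a cleaner route than your ``total mass minus mass on $p\Z_p$'' detour is the cyclotomic product $\prod_{0<i<p^n,\,(i,p)=1}(1-\xi_{p^n}^i)=p$, which together with Proposition~\ref{prop:1.2} gives the equality directly at each finite level $n$.
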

\begin{proof}
 Let $c\in \Z_p^\times $ be such that its class modulo $p^2$ generates $(\Z/p^2\Z)^\times$. 
 Following corollary \ref{cor:4.5} $\int_{\Z_p^{\times}} d\mu ^\times _{\Delta (f_{\epsilon})}=0$.
 Hence it follows from lemma \ref{lem:9.3} that there is a unique $\nu _{\epsilon , c}\in \Z_p[[\Z_p^\times ]]$ such that
 \[
  \mu ^\times _{\Delta (f_{\epsilon})}=(\delta _1-\delta _{c^{-1}})\cdot \nu _{\epsilon , c}\,.
 \]
Hence it follows from proposition \ref{prop:7.2} that 
\[
 \mathfrak{A}(\widearrow{10})^\times([cl](\epsilon )) =  \mathcal{F}_c  \cdot \nu _{\epsilon , c}\,.
\]
It follows from lemma \ref{lem:3.1} that 
\[
 \int_{\Z_p^{\times}} d\mathfrak{A}(\widearrow{10})^\times([cl](\epsilon ))  = 
 \int_{\Z_p^{\times}} d\mathcal{F}_c  \cdot  \int_{\Z_p^{\times}} d\nu _{\epsilon , c}\,.
\]
Let us observe that $ \int_{\Z_p^{\times}} d\mathfrak{A}(\widearrow{10})(\sigma )^\times =\kappa (p)(\sigma )$ for any $\sigma \in G_{\Q (\mu _{p^\infty })}$.
On the other side
\[
 \int_{\Z_p^{\times}} d\mathcal{F}_c  \cdot  \int_{\Z_p^{\times}} d\nu _{\epsilon , c}=(1-p^{-1})\log (c^{-1})\big( -\frac{\log q}{\log (c^{-1})}
 \int_{\Z_p  }xdA(\mu ^\times _{\Delta (f_{\epsilon})}) (x)\big)=
\]
\[
 (1-p^{-1}) \big( - {\log q} \int_{\Z_p  }xdA(\mu ^\times _{\Delta (f_{\epsilon})}) (x)\big)
\]
by lemmas \ref{lem:9.1} and \ref{lem:9.3}.
\end{proof}
Let us define 
\[
 \Psi : \Z_p[[\Z_p^\times ]]\rightarrow \prod_{k=-\infty}^{\infty} \Z_p(k)
\]
by the formula
\[
 \Psi (\mu )=\big( \int_{\Z_p^{\times}}x^{k-1} d\mu (x)\big)^\infty _{k=-\infty}\,.
\]
As in the proof of lemma \ref{lem:3.1} one checks that $\Psi (\mu \cdot \nu )=\Psi (\mu )\Psi (\nu )$.
Hence it follows from proposition \ref{prop:7.2} that
\[
 (1-c^{1-k})\big( \int_{\Z_p^{\times}}x^{k-1}d\mathfrak{A}(\widearrow{10})^\times ([cl](\epsilon ) )   \big)=
 \big( \int_{\Z_p^{\times}} x^{k-1}d\mathcal{F}_c (x)\big) \big( \int_{\Z_p^{\times}} x^{k-1}d\mu ^\times _{\Delta (f_{\epsilon})}) (x)\big)\,.
\]
Following (\ref{eq:13}), $\int_{\Z_p^{\times}} x^{k-1}d\mathcal{F}_c (x)=-(1-c^{1-k})L_p(k,\omega ^{1-k})$ for $k\neq 1$. 
Hence we have proved the following result.
\begin{prop} \label{prop:9.6}
 Let $\epsilon \in \mathcal{U}_{\infty}^1$. For $k\neq 1$ we have
 \[
  \int_{\Z_p^{\times}}x^{k-1}d\mathfrak{A}(\widearrow{10})^\times ([cl](\epsilon ) )    =
  -L_p(k,\omega ^{1-k})\int_{\Z_p^{\times}} x^{k-1}d\mu ^\times _{\Delta (f_{\epsilon})}  (x)\,.
 \]
\end{prop}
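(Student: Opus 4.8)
Proposition \ref{prop:9.6} asserts the Ihara-type formula for all $k \neq 1$, without the oddness restriction present in (\ref{eq:11}). The plan is to bootstrap from the key multiplicative identity of Proposition \ref{prop:7.2}, namely $(\delta_1 - \delta_{c^{-1}}) \cdot \mathfrak{A}(\widearrow{10})^{\times}([cl](\epsilon)) = \mathcal{F}_c \cdot \mu_{\Delta(f_{\epsilon})}^{\times}$ in $\Z_p[[\Z_p^{\times}]]$, by pushing it through a ring homomorphism that records \emph{all} moments at once, rather than just the odd ones. This is the role of the map $\Psi \colon \Z_p[[\Z_p^{\times}]] \to \prod_{k=-\infty}^{\infty} \Z_p(k)$ defined just before the statement by $\Psi(\mu) = \big(\int_{\Z_p^{\times}} x^{k-1}\, d\mu(x)\big)_{k}$.

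First I would verify that $\Psi$ is multiplicative, $\Psi(\mu \cdot \nu) = \Psi(\mu)\Psi(\nu)$; this is exactly the computation carried out for $\Phi$ in the proof of Lemma \ref{lem:3.1}, with the index set $\{1, 2, 3, \dots\}$ simply replaced by all of $\Z$, and the same telescoping over $xy = z$ in $\Z_p^{\times}$ works verbatim. Applying $\Psi$ to the identity of Proposition \ref{prop:7.2} then yields, componentwise in $k$,
\[
(1 - c^{1-k}) \int_{\Z_p^{\times}} x^{k-1}\, d\mathfrak{A}(\widearrow{10})^{\times}([cl](\epsilon)) = \Big(\int_{\Z_p^{\times}} x^{k-1}\, d\mathcal{F}_c(x)\Big)\Big(\int_{\Z_p^{\times}} x^{k-1}\, d\mu_{\Delta(f_{\epsilon})}^{\times}(x)\Big),
\]
since $\Psi(\delta_1 - \delta_{c^{-1}})$ has $k$-th component $1 - c^{1-k}$ by (\ref{eq:14}).

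Next I would substitute the value of the first factor on the right. By (\ref{eq:13}), for $k \neq 1$ we have $\int_{\Z_p^{\times}} x^{k-1}\, d\mathcal{F}_c(x) = -(1-c^{1-k}) L_p(k, \omega^{1-k})$. Plugging this in gives
\[
(1 - c^{1-k}) \int_{\Z_p^{\times}} x^{k-1}\, d\mathfrak{A}(\widearrow{10})^{\times}([cl](\epsilon)) = -(1-c^{1-k}) L_p(k, \omega^{1-k}) \int_{\Z_p^{\times}} x^{k-1}\, d\mu_{\Delta(f_{\epsilon})}^{\times}(x),
\]
and since $c \in \Z_p^{\times} - \mu_{p-1}$, one can choose $c$ so that $c^{1-k} \neq 1$ for the relevant $k$ (e.g. $c$ of infinite order in $\Z_p^{\times}$ makes $1 - c^{1-k}$ a nonzero element of $\Z_p$, hence cancellable). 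Cancelling $1 - c^{1-k}$ yields the claimed formula. The only genuinely delicate point is the cancellation: $1 - c^{1-k}$ lies in $\Z_p$ and need not be a unit, so I should remark that it is nonzero (true whenever $c^{1-k} \neq 1$) and that $\Z_p$ is a domain, so cancellation is legitimate; alternatively one notes both sides are continuous in $k$ and the identity with the factor $(1-c^{1-k})$ already determines them. I expect this bookkeeping around the $k=1$ exclusion and the choice of $c$ to be the main (though minor) obstacle; everything else is a formal consequence of Proposition \ref{prop:7.2}, the multiplicativity of $\Psi$, and formula (\ref{eq:13}).
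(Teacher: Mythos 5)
Your argument is exactly the one the paper uses: apply the multiplicative map $\Psi$ to the identity of Proposition \ref{prop:7.2}, substitute formula (\ref{eq:13}) for $\int_{\Z_p^\times} x^{k-1}\,d\mathcal{F}_c$, and cancel the nonzero scalar $1-c^{1-k}\in\Z_p$. The only difference is that you make the cancellation step explicit where the paper leaves it tacit, which is a harmless (and arguably welcome) clarification.
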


{\bf Acknowledgements} The second author would like to thank very much Hiroaki Nakamura for discussions.

\end{document}